\documentclass[11pt,twoside]{article}

\usepackage{fullpage}
\usepackage{booktabs}
\usepackage{algorithm}

\input{command-template}

\newcommand\blfootnote[1]{%
  \begingroup
  \renewcommand\thefootnote{}\footnote{#1}%
  \addtocounter{footnote}{-1}%
  \endgroup
}

\newcommand{\E}{\mathbb{E}}
\newcommand{\cB}{\mathcal{B}}
\newcommand{\cA}{\mathcal{A}}
\newcommand{\SO}{\mathrm{SO}}
\newcommand{\Cost}{\mathrm{Cost}}
\newcommand{\inner}[2]{\langle #1,#2\rangle}
\newcommand{\sge}[1]{{\color{teal}#1}}
\newcommand{\PG}{\mathsf{G}}
\def\feaReg{X}
\def\nn{\nonumber}
\def\beq{\begin{equation}}
\def\eeq{\end{equation}}
\def\bbe{\mathbb{E}}
\def\tsum{\textstyle\sum}

\begin{document}

\begin{center}
		
		{\bf{\LARGE{When is a better LLM worth paying for? \\
        Stochastic Optimization with Switching Mixed-Fidelity Oracles
        }}}
		
		\vspace*{.2in}
		
		{\large{
				\begin{tabular}{ccc}
					Swati Gupta$^\star$ & Tianjiao Li$^{\dagger,\diamond}$ 
				\end{tabular}
		}}
		\vspace*{.2in}
		
		\begin{tabular}{c}
			MIT Sloan School of Management$^\star$\\
			IBM Research$^\dagger$
		\end{tabular}
		
		\vspace*{.2in}

		\today
		
		\vspace*{.2in}
		
		\begin{abstract}
        \blfootnote{$^\diamond$Tianjiao Li was at MIT when most of this work was performed.}
        Large language models (LLMs) are increasingly used upstream of decision-making to clean data, detect label errors, and reconstruct missing values. Yet LLM-assisted data curation creates a new operational tradeoff: higher-fidelity models and richer prompts can be substantially more expensive, while the value of this additional fidelity depends on the downstream optimization accuracy.  This raises the central question of this work: \emph{when should one pay for a better LLM, {given a specific downstream task at hand?}} 

        We study this question through stochastic optimization with mixed-fidelity first-order oracles, {which can have systematic bias and state-dependent variance.} We develop two cost-aware algorithmic frameworks that allow adaptive switching between stochastic oracles. With known bias, the first meta-algorithm applies a restarted, cost-aware policy that adaptively selects oracle fidelity across optimization stages. Its cumulative cost is no larger than that of the best fixed oracle in hindsight and can be substantially smaller. With unknown bias, we develop a budgeted certification scheme that uses occasional trusted-gradient evaluations to certify the optimization progress and eliminate unsuccessful oracle sources, and show an explicit tradeoff between certification cost and delayed detection of biased sources. 
        We provide concrete realizations of these meta-algorithms by applying biased stochastic gradient extrapolation (SGE) and recursive regularization under state-dependent noise as subroutines. 
        Synthetic and LLM-completion experiments {illustrate the qualitative switching behavior predicted by the theory}: the adaptive policy skips several expensive LLM oracles and substantially reduces cumulative cost relative to fixed high-fidelity alternatives while achieving comparable accuracy. Our studies demonstrate that, when information quality can be purchased at different prices, the right fidelity need not be fixed; it depends on how valuable additional accuracy is at the current stage of the downstream decision problem.
		\end{abstract}
	\end{center}

\section{Introduction}
Large language models (LLMs) are evolving rapidly, with a growing number of models available through commercial APIs and open-source platforms.
These models are increasingly used not only to make predictions, but also to \emph{prepare the data on which downstream decisions are based}. Recent work uses LLMs for data cleaning and matching \citep{zhang2024jellyfish,parciak2024schema,peeters2025entity}, missing-value imputation \citep{hayat2024claim,he2025llmforest,wang2025llm4hrs}, denoising recommendation data \citep{wang2025llard,wang2025llm4dsr}, and data annotation \citep{gilardi2023chatgpt,goel2023llms,ye2025calibrating}. 
In these applications, choosing a model or prompt affects not only the cost of obtaining the data, but also its statistical quality for the downstream task. Stronger models, richer prompts, or ensembles may produce more accurate data, but often at a substantially higher price. The key question we consider in this work is: \emph{when is additional fidelity worth paying for?} We explore this in the context of downstream optimization tasks, like supervised machine learning. 



Consider a synthetic experiment on the \texttt{Aspiring Minds} hiring dataset with 10\% missing covariates (randomly masked). The goal is to predict \texttt{ComputerProgramming} scores of the candidates from numeric independent features like performance in high school, tier of the undergraduate college, agreeableness, etc. We ask different LLM model--prompt pipelines to reconstruct the missing values, {which induces} a stochastic-gradient oracle for downstream linear regression, with different fidelities. Table~\ref{tab:llm-motivation} reports their cost and representative fidelity parameters. Even within one model family, richer prompting can significantly change downstream fidelity at almost the same cost. Across different models, price and fidelity are not necessarily monotone. For example, Claude Sonnet 5 costs substantially more than Claude Haiku 4.5 while attaining essentially the same downstream objective in this experiment. Conversely, averaging several high-fidelity completions reduces bias but is much more expensive than a simple completion with GPT Nano. Thus neither ``always use the cheapest model'' nor ``always use the best model'' is compelling.

\begin{table}[t]
\centering
\caption{Representative LLM-completion pipelines at 10\% missingness. Cost is the one-time full-dataset completion price; Obj. is the clean downstream regression objective. The empirical diagnostics $\sigma_{\lambda,*}^2$, $\cL_\lambda$, and $\widehat{\cB}_\lambda$ are defined in Section~\ref{sec:numerics}. The ``clean data'' row is a benchmark and is unavailable to the adaptive policy. The \textbf{bolded} rows represent the stochastic oracles selected along the trajectory of our adaptive algorithm. }
\label{tab:llm-motivation}
\footnotesize
\setlength{\tabcolsep}{3.0pt}
\vspace{0.2cm}
\begin{tabular}{clrrrrr}
\toprule
$\lambda$ & Completion pipeline & Cost (\$) & Obj. & $\sigma_{\lambda,*}^2/10^4$ & $\cL_\lambda/10^3$ & $\widehat{\cB}_\lambda$ \\
\midrule
-- & Clean data (benchmark) & 0     & 3354 & 8.28 & 6.48 & 0 \\
1 & GPT-5.4 Nano, simple-7   & 0.182 & 3823 & 9.41 & 6.48 & 5.85 \\
\textbf{2} & \textbf{GPT-5.4 Nano, LR-short-7} & \textbf{0.168} & \textbf{3722} & \textbf{9.12} & \textbf{6.48} & \textbf{3.95} \\
3 & GPT-5.4 Nano, LR-short-10& 0.189 & 3691 & 9.11 & 6.48 & 3.26 \\
\textbf{4} & \textbf{GPT-5.4 Nano, LR-10}      & \textbf{0.188} & \textbf{3665} & \textbf{9.16} & \textbf{6.48} & \textbf{2.87} \\
\textbf{5} & \textbf{GPT-5.6 Luna, LR-10}      & \textbf{0.822} & \textbf{3617} & \textbf{8.69} & \textbf{6.48} & \textbf{2.31} \\
6 & Claude Haiku 4.5, LR-10  & 1.417 & 3596 & 8.71 & 6.48 & 2.01 \\
7 & Claude Sonnet 5, LR-10   & 3.158 & 3596 & 8.74 & 6.48 & 2.37 \\
\textbf{8} & \textbf{Avg. Haiku+Sonnet, LR-10} & \textbf{4.575} & \textbf{3581} & \textbf{8.68} & \textbf{6.48} & \textbf{1.86} \\
9 & Avg. Luna+Haiku+Sonnet, LR-10 & 5.397 & 3585 & 8.65 & 6.48 & 1.98 \\
\bottomrule
\end{tabular}
\end{table}

\begin{figure}[t]
\centering
\includegraphics[width=0.7\linewidth]{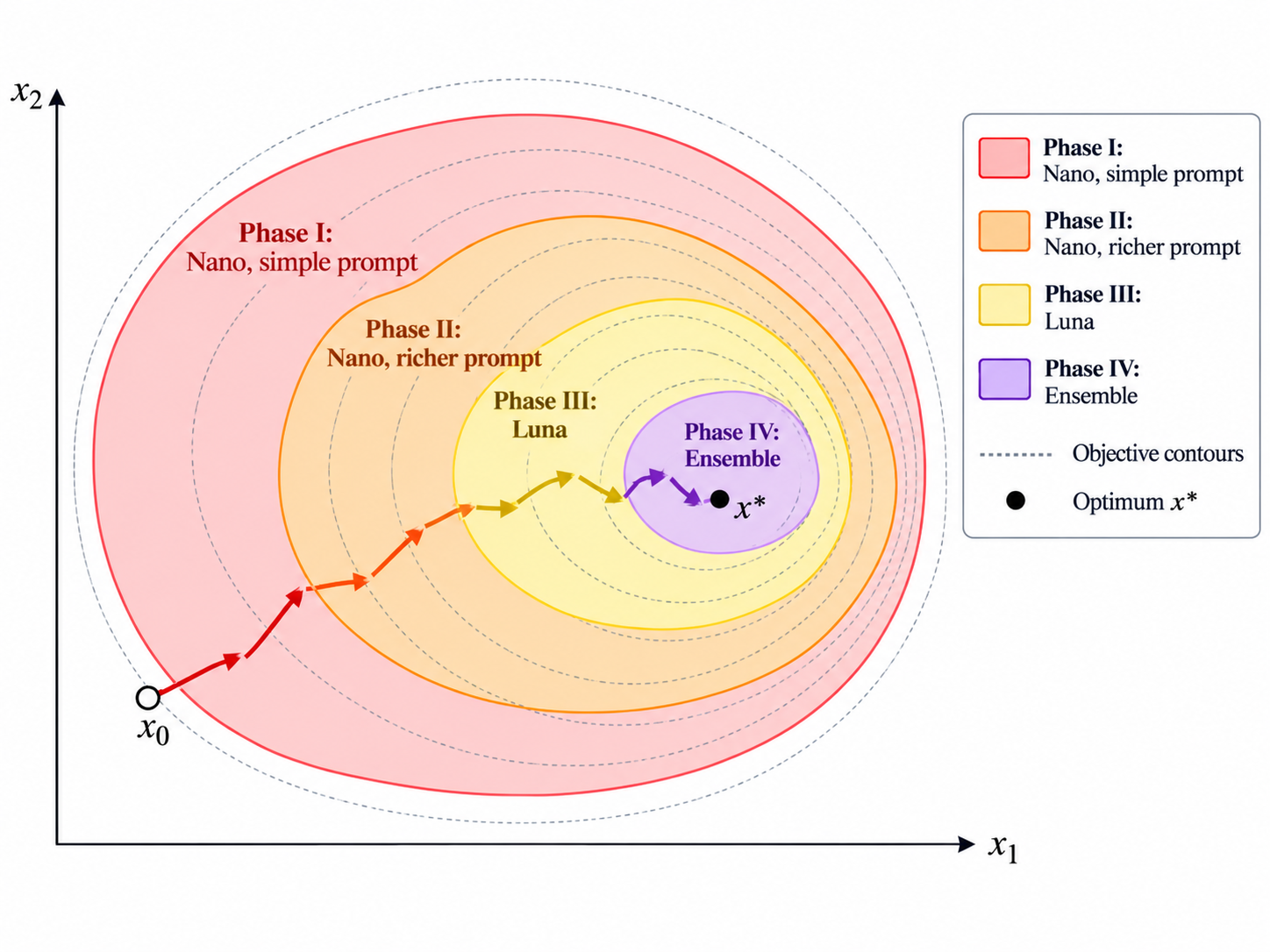}
\caption{Schematic illustration of stage-dependent oracle switching. At coarse accuracy scales, a low-cost oracle can be sufficient for continued progress; as optimization approaches $x^*$, higher-fidelity oracles become worthwhile or necessary. The actual switching points depend jointly on oracle cost, bias, variance, and the current optimization scale.}
\label{fig:illustration_plot}
\end{figure}

This question motivates us to propose a progress-dependent optimization strategy. Rather than committing to one oracle throughout optimization, we divide the optimization trajectory into stages of progressively finer accuracy. At a stage with current error scale $R$, an oracle does not need to be the most accurate one available; it only needs to be accurate enough to support the next constant-factor reduction in optimization error. Among the oracles that can achieve this reduction, we choose the most cost-efficient one, i.e., the one with the smallest total stage cost. Therefore, when far from the optimum, a cheap but relatively noisy or biased oracle can be the most economical choice. As the target radius shrinks, the required bias level becomes more stringent and residual noise becomes more important, causing the algorithm to switch to higher-fidelity sources. This design is informed by recent studies on state-dependent behavior of stochastic optimization algorithms \citep{woodworth2021even,juditsky2023sparse,ilandarideva2025state}. In the classical state-independent setting \citep{nemirovski2009robust, lan2012optimal}, the stochastic gradient noise is assumed to be uniformly bounded over the domain, which can be restrictive for important applications such as generalized linear regression with unbounded regressors. Under the more general state-dependent model, the noise level varies with the optimality state: the state-dependent component decreases as the iterates approach the optimum, while the residual component captures the irreducible noise at the optimum. With multiple priced oracles, these different scalings imply that the relative cost-efficiency of the oracles can change across optimization stages; thus there need not be a single oracle that is uniformly optimal throughout the trajectory. In our illustrative experiment with oracles shown in Table~\ref{tab:llm-motivation}, this logic produces the progression from inexpensive Nano pipelines to Luna and eventually an ensemble, while skipping several more expensive alternatives like Sonnet altogether. This algorithmic idea is further illustrated in Figure~\ref{fig:illustration_plot}.\footnote{Additionally, we use Claude Opus 5.5 to create an interactive playground \url{https://claude.ai/artifact/AsfEe1aEpfKCkPv9YXsDEg} for the reader to explore our algorithmic framework with different stochastic oracles.}

\paragraph{Framework for Mixed-Fidelity Stochastic Oracles} We consider a general model for stochastic optimization, where mixed-fidelity oracles can be used to provide stochastic gradients, with known costs and state-dependent bias and variance. The goal is to minimize 
\begin{equation}
  f^*:=\min_{x\in X} f(x),
  \label{eq:problem}
\end{equation}
where $X\subseteq\R^d$ is closed and convex, and $f$ is $L$-smooth and $\mu$-strongly convex\footnote{Here, a function $f(\cdot)$ is $L$-smooth and $\mu$-strongly convex if $\frac{\mu}{2}\|x-y\|^2 \leq f(x) - f(y) -\langle \nabla f(y), x-y\rangle \leq \frac{L}{2}\|x-y\|^2$
where $\|\cdot\|$ denote the Euclidean norm ($\ell_2$-norm) in $\mathbb{R}^d$.}. For example, one can consider the least squares loss for any regression problem. We assume that we have access to $\ell$ stochastic first-order oracles $\{SO_{\lambda}\}_{\lambda=1}^\ell$, where a query to $SO_{\lambda}$ costs $c_{\lambda}$ and this is known to the user. The fidelity of the oracle is governed by three parameters: (i) the systemic bias of the oracle $\cB_{\lambda}$; (ii) the state-dependent modulus $\cL_{\lambda}$; (iii) the residual variance near the optimum $\sigma_{\lambda, *}^2$. 
A query to $\SO_\lambda$ at point $x$ returns a stochastic gradient  $\cG_\lambda(x,\xi)$ with mean $g_\lambda(x)=\bbe_\xi[\cG_\lambda(x,\xi)]$, and 
\begin{align}
 \|g_\lambda(x)-\nabla f(x)\| &\le \cB_\lambda, \label{eq:bias}\\[-1mm]
 \E\|\cG_\lambda(x,\xi)-g_\lambda(x)\|^2
 &\le \cL_\lambda [f(x) - f^*]+\sigma_{\lambda,*}^2. \label{eq:variance}
\end{align}
The first condition captures systematic oracle bias. In our most general setting, we do not assume knowledge of the bias parameter.   The second condition allows the stochastic variance to depend on the current optimization error: the component $\cL_\lambda [f(x) - f^*]$ decreases as optimization progresses, while the residual variance $\sigma_{\lambda,*}^2$ is non-diminishing even near the optimum. This setting coincides with our framework where the noise introduced by stochastic oracles depend on the optimization progress. Together, $\cB_\lambda$, $\cL_\lambda$, $\sigma_{\lambda,*}^2$, and the price $c_\lambda$ determine the value of oracle $\lambda$ at a given accuracy scale. In an LLM pipeline, an oracle may correspond to a model, prompt, ensemble, or human--AI combination, while $f$ represents the downstream objective rather than the upstream completion or annotation task itself. Section~\ref{sec:applications} gives concrete examples and verifies these assumptions for LLM-generated labels and completed covariates.

\subsection{Contributions.}
Our main contributions lie in the design and analysis of cost-efficient adaptive oracle-switching meta-algorithms,  together with concrete stochastic optimization subroutines and LLM experiments.
\begin{itemize}
    \item \textbf{Multistage oracle-switching with known bias.} We develop a generic meta-algorithm that uses a chosen stochastic optimization subroutine across stages, so that the current squared-distance radius is reduced by a constant factor. At each stage, the policy selects the most cost-efficient oracle for achieving the next contraction among all oracles with admissible bias. As one concrete contraction subroutine, we analyze a strongly convex variant of stochastic gradient extrapolation (SGE) \citep{lan2018rge} under state-dependent noise and persistent bias. 
    For unbiased oracles, the cumulative cost of our framework is no larger than that of the best fixed oracle in hindsight, and the savings can be up to $\Theta(K/\log K)$ over $K$ stages.  With biased oracles, the framework can exploit cheaper sources early while ultimately reaching the accuracy scale allowed by lower-bias sources. 
    
    \item {\bf Characterizing cost with respect to desired accuracy:} The stagewise formulation yields explicit operational criteria for comparing oracle cost and fidelity. We characterize when one oracle is dominated by another and derive stage-dependent break-even prices that quantify how much one should be willing to pay for a higher-fidelity source at a given downstream accuracy level. Our results can help vendors check a simple condition dependent on their downstream task, and decide which oracles are useful in their pipeline. 

    \item \textbf{Budgeted certification with unknown bias.} When bias parameters are unavailable or hard-to-estimate, we develop a second meta-algorithm based on occasional trusted projected-gradient certification. We introduce a budget parameter to control how much uncertified training can be performed before the next checkpoint; failed sources are removed permanently while any certifiable progress is retained. We further show that recursive regularization combined with biased SGE provides a concrete projected-gradient reduction subroutine for certification, and characterize the resulting tradeoff between frequent certification and delayed detection of biased sources.

    \item \textbf{Empirical validation.} We conducted controlled simulations and LLM-completion experiments on the \texttt{Aspiring Minds} hiring dataset. The adaptive strategy exhibits the predicted stage-dependent switching and achieves substantial cost savings relative to strategies with a fixed stochastic oracle. At 10\% missingness, for example, the adaptive policy follows \[
\begin{aligned}
\text{Nano LR-short-7 (SO2)} \longrightarrow \text{Nano LR-10 (SO4)}\longrightarrow \text{Luna (SO5)} \longrightarrow \text{Avg. Haiku+Sonnet (SO8)},
\end{aligned}
\]
skipping several more expensive alternatives.
The cumulative cost is about \$72, versus about \$240 for always using the best oracle, a reduction of roughly 70\% while reaching the same accuracy scale.  Our experiments also provide clear evidence for the certification frequency tradeoff when the bias parameters are unknown.
\end{itemize}

\subsection{Related work}
\paragraph{LLM data curation and noisy supervision.}
LLMs are increasingly used to improve data quality before it enters a downstream learning or decision problem. For missing-data imputation, \citet{hayat2024claim} use contextual information from pre-trained language models, \citet{he2025llmforest} combine graph-augmented prompting with an ensemble of LLMs, and \citet{wang2025llm4hrs} develop an LLM-based spatiotemporal imputation model for highly sparse remote-sensing data. Related work uses LLMs for general-purpose data cleaning and matching \citep{zhang2024jellyfish,parciak2024schema,peeters2025entity}, and for identifying or correcting noisy interactions in recommender systems \citep{wang2025llard,wang2025llm4dsr}. On the labeling side, LLMs can serve as scalable annotators or assist human annotators \citep{gilardi2023chatgpt,goel2023llms}; recent work also studies how to detect label errors \citep{nahum2025labelerrors} and train classifiers robustly from LLM-generated noisy labels \citep{ye2025calibrating}. These papers primarily ask how LLMs can improve upstream data quality. Our focus is complementary: given several data-generation or curation pipelines with different costs and fidelities, we ask how their value changes with the stage of the downstream optimization problem.

\paragraph{Cost-aware model selection and data markets.}
Closest to our cost perspective are systems that choose different models for different inference queries. \citet{chen2024frugalgpt} propose FrugalGPT which exploits heterogeneous model prices and task performance to construct lower-cost cascades. \citet{ong2025routellm} propose RouteLLM that learns to route prompts between stronger and weaker models using preference data. 
  In a concurrent work, \citet{belloni2026online} develop an online learning approach to adaptively query and select LLM APIs. Our setting is different:
the source is selected according to the \emph{progress of a downstream optimization algorithm}, rather than independently for each inference query. 
More broadly, the economics literature studies data as a priced asset and analyzes data markets, valuation, and pricing distortions \citep{acemoglu2022data,farboodi2023data}. The value of clean versus corrupted data also appears outside the LLM setting. Denoising is central to diffusion-based generative modeling \citep{ho2020ddpm}, while \citet{daras2025ambient} explicitly quantify how noisy training samples trade off against clean samples in ambient diffusion.

\paragraph{Stochastic, multi-fidelity, and multilevel optimization.}
Multi-fidelity optimization and simulation exploit sources of different costs and accuracies, often to reduce the number of expensive evaluations \citep{peherstorfer2018multifidelity,kandasamy2017multifidelity}. In first-order methods and stochastic approximation \citep{polyak1990new,nemirovski2009robust}, inexact-oracle models quantify the effect of deterministic errors \citep{devolder2014inexact}, while biased stochastic-gradient analyses study persistent gradient bias; see, e.g., \citet{ajalloeian2020biased}. Our noise model is also related to stochastic optimization with state-dependent noise conditions that replace a uniform variance bound by one that decreases near the solution \citep{gower2019sgd, aybat2019universally, gower2021structured,woodworth2021even,kotsalis2022simple,juditsky2023sparse,ilandarideva2025state,rodomanov2024universality}. As one concrete implementation of our meta-algorithms, we use SGE-type acceleration proposed by \citet{lan2018rge}, whose extrapolation structure is well suited to state-dependent noise as shown in \citet{ilandarideva2025state}. Moreover, our unknown-bias construction is related to learning from weak or inexpensive information augmented by a small trusted source \citep{ratner2017snorkel,hendrycks2018trusted,angelopoulos2023ppi}, and uses recursive regularization for gradient minimization in stochastic settings \citep{allen2018gradient,foster2019complexity} for certification.

Closely related to our bias--variance--cost perspective is the multilevel Monte Carlo (MLMC) line of work on stochastic optimization with biased oracles. \citet{hu2021biasvariancecost,hu2024mlmc} study a hierarchy of increasingly accurate and increasingly expensive biased stochastic oracles and design MLMC gradient estimators that exploit the joint tradeoff among bias, variance, and query cost.
This line builds on MLMC and randomized-debiasing ideas for stochastic approximation \citep{giles2008multilevel,blanchet2015unbiased,dereich2019general,blanchet2017unbiased,beatson2019efficient,asi2021stochastic}. In particular, \citet{beatson2019efficient} tune randomized telescoping estimators according to optimization progress per unit of computation, while \citet{asi2021stochastic} use multilevel randomization to construct low-bias, low-cost estimators for optimization subroutines. Our setting shares the principle that lower bias should be purchased only when it is worth the additional cost, but the oracle model and algorithmic mechanism are different. The MLMC framework assumes a structured approximation hierarchy and exploits coupled difference estimators across adjacent fidelity levels. By contrast, we allow a finite collection of heterogeneous black-box stochastic oracles that need not be nested, ordered by fidelity, or coupled through difference estimators. 


\subsection{Notation and organization}\label{subsec_notation}
Throughout the paper, we let $\|\cdot\|$ denote the Euclidean norm for vectors and the spectral norm for matrices, and let $\langle\cdot,\cdot\rangle$ denote the Euclidean inner product. We let $x^*$ denote an optimal solution of \eqref{eq:problem}. We use $\E$ for expectation. For symmetric matrices $A$ and $B$, $A\preceq B$ means that $B-A$ is positive semidefinite, and $\lambda_{\max}(A)$ denotes the largest eigenvalue of $A$. For a nonempty closed convex set $Y$, we denote the Euclidean projection by
\[
    \Pi_Y(x):=\arg\min_{z\in Y}\|z-x\|.
\]
For a stepsize $\gamma>0$, the projected-gradient mapping of $f$ over $Y$ is defined as
\begin{align}\label{def_projected_gradient}
    \PG^f_{Y,\gamma}(x)
    :=
    \tfrac{1}{\gamma}\left[x-\Pi_Y\bigl(x-\gamma\nabla f(x)\bigr)\right].
\end{align}
We use the shorthand $\PG_{Y,\gamma}(x)$ when the function $f$ is clear from the context. Finally, $\widetilde O(\cdot)$ suppresses logarithmic factors.

The rest of the paper is organized as follows. Section~\ref{sec:applications} gives concrete examples on LLM-generated labels and LLM-completion and verifies the oracle assumptions. Section~\ref{sec:known_bias} develops the known-bias meta-algorithm and proposes biased SGE as a concrete contraction subroutine. Section~\ref{sec:when-pay} characterizes when higher fidelity is worth paying for, including oracle domination and break-even-price interpretations. Section~\ref{sec:unknown_bias} develops the unknown-bias budgeted-certification meta-algorithm, proposes a projected-gradient reduction subroutine using recursive regularization, and studies how often certification should be purchased. Section~\ref{sec:numerics} reports the numerical experiments.

\section{Applications: LLM-generated labels and completed data}\label{sec:applications}
In this section, we provide two concrete examples showing how the mixed-fidelity assumptions arise in LLM-assisted data pipelines. The first considers LLM-generated or LLM-corrected labels; while the second considers LLM reconstruction of missing covariates. In both cases, stronger models, richer prompts, ensembles, or human review can be represented as more expensive oracles with different bias and variance parameters.

\subsection{LLM-generated noisy labels}
Consider a generalized linear model with feature vector $\theta$ and clean conditional mean
\begin{align}\label{eq:glm}
\E[y|\theta]=u(\theta^\top x^*),
\end{align}
where $u=v'$ is $\bar r$-Lipschitz and $\underline r$-strongly monotone.  The clean population objective is
\begin{align}\label{eq:glm_objective}
f(x)=\E\!\left[v(\theta^\top x)-y\theta^\top x\right].
\end{align}
Assume $x^*$ is an interior point over the feasible region $X$, so that $\nabla f(x^*) = 0$.

Suppose the stochastic oracle $\lambda$ obtains labels from an LLM, a human annotator, or a hybrid procedure, such that
\begin{equation}
    y^\lambda=u(\theta^\top x^*)+b_\lambda(\theta)+\varepsilon^\lambda,
    \label{eq:label-source}
\end{equation}
where $\E[\varepsilon^\lambda|\theta]=0$ and $b_\lambda(\theta)$ represents systematic annotation error. Then the stochastic gradient
\[
\cG_\lambda(x;\theta,y^\lambda)=\theta\bigl(u(\theta^\top x)-y^\lambda\bigr)
\]
has mean $g_\lambda(x) = \bbe[\cG_\lambda(x;\theta,y^\lambda)]$ satisfying
\begin{align*}
g_\lambda(x)-\nabla f(x)=-\E[\theta b_\lambda(\theta)],
\end{align*}
so the bias condition \eqref{eq:bias} holds with
$\cB_\lambda=\|\E[\theta b_\lambda(\theta)]\|$. This systematic bias can be viewed as a natural distinction between, for example, a cheap LLM labeler with systematic class-specific errors and a more expensive expert-reviewed labeler.

The state-dependent variance condition \eqref{eq:variance} follows under a standard fourth-moment assumption. Let $\Sigma=\E[\theta\theta^\top]$ and suppose $
\E[\|\theta\|^2\theta\theta^\top]\preceq \varsigma\Sigma.$
Invoke that the strong monotonicity of $u$ indicates
$f(x) - f^*\ge (\underline r/2)(x-x^*)^\top\Sigma(x-x^*)$. We can decompose $\cG_\lambda-g_\lambda$ to obtain
\begin{align}
\E\|\cG_\lambda(x; \theta, y^\lambda)-g_\lambda(x)\|^2 &= \bbe\left\|\theta\bigl(u(\theta^\top x)-u(\theta^\top x^*)\bigr) - \theta(b_\lambda(\theta) + \varepsilon^\lambda) +  \E[\theta b_\lambda(\theta)]\right\|^2\nn\\
&\leq 2\bbe\left\|\theta\bigl(u(\theta^\top x)-u(\theta^\top x^*)\bigr)\right\|^2 +\sigma_{\lambda,*}^2\nn\\
&\leq 2\bar r^2\bbe[\|\theta\|^2\|\theta^\top (x- x^*)\|^2] +\sigma_{\lambda,*}^2\nn\\
&\leq 2\bar r^2 \varsigma (x-x^*)^\top\Sigma(x-x^*) + \sigma_{\lambda,*}^2\nn\\
&\leq
\tfrac{4\bar r^2\varsigma}{\underline r}[f(x) - f^*]+\sigma_{\lambda,*}^2,
\label{eq:label-state-noise}
\end{align}
where $
\sigma_{\lambda,*}^2
=2\E\left\|\theta\bigl(b_\lambda(\theta)+\varepsilon^\lambda\bigr)
-\E[\theta b_\lambda(\theta)]\right\|^2.
$
Thus, different label sources can share essentially the same state-dependent component that does not depend on the labeling error $b_\lambda(\theta)$ and $\epsilon^\lambda$, while having very different bias and residual variance. In practice, repeated annotations or a small trusted calibration set can be used to estimate these quantities, which is precisely the setting motivating the certification mechanism in Section~\ref{sec:unknown_bias}.

\subsection{LLM completion of missing covariates}\label{subsec:application_missing_data}
In this subsection, we consider the same generalized linear model \eqref{eq:glm} as in Subsection~2.1, but our observation of $\theta$ has various missing entries. As before, assume $x^*$ is an interior point of $X$, and the clean population objective is \eqref{eq:glm_objective}.
Suppose the model--prompt pipeline $\lambda$ reconstructs missing entries of $\theta$ to produce $\widetilde\theta_\lambda$. The resulting stochastic-gradient oracle is
\[
    \cG_\lambda(x;\widetilde\theta_\lambda,y)
    =\widetilde\theta_\lambda\bigl(u(\widetilde\theta_\lambda^\top x)-y\bigr),
\]
with mean $g_\lambda(x)=\E[\cG_\lambda(x;\widetilde\theta_\lambda,y)]$. Define the bias at the optimum by $b_\lambda:=g_\lambda(x^*)$.

For $a,b\in\R$, define 
$
q(a,b)=
\frac{u(a)-u(b)}{a-b}
$
for $a \neq b$,
and choose any $q(a,a)$ as an arbitrary number in $[\underline r,\bar r]$. By strong monotonicity and Lipschitz continuity of $u$, we have
$\underline r\le q(a,b)\le\bar r$. Let
\[
Q(x)=\E\!\left[q(\theta^\top x,\theta^\top x^*)\theta\theta^\top\right],
\quad
Q_\lambda(x)=\E\!\left[q(\widetilde\theta_\lambda^\top x,
\widetilde\theta_\lambda^\top x^*)
\widetilde\theta_\lambda\widetilde\theta_\lambda^\top\right].
\]
Since $\nabla f(x^*)=0$, we have
\begin{align}
    g_\lambda(x)-\nabla f(x) &=  b_\lambda + \bbe\left[ \widetilde\theta_\lambda\bigl(u(\widetilde\theta_\lambda^\top x)-u(\widetilde\theta_\lambda^\top x^*)\bigr) - \theta\bigl(u(\theta x)-u(\theta^\top x^*)\bigr)\right] \nn\\
    &=b_\lambda+\bigl(Q_\lambda(x)-Q(x)\bigr)(x-x^*).
    \label{eq:completion-bias-decomp_glm}
\end{align}
Therefore, assuming $\max_{x \in X}\|x-x^*\|\le D$ and defining
$
\Delta_\lambda(D)
:=\sup_{\|x-x^*\|\le D}\|Q_\lambda(x)-Q(x)\|,
$
the uniform-bias assumption \eqref{eq:bias} holds with
\begin{equation}
    \cB_\lambda
    \le \|b_\lambda\|+\Delta_\lambda(D)D.
    \label{eq:completion-bias-bound}
\end{equation}
Thus, higher-quality completions can reduce both the bias at the optimum and the local curvature mismatch induced by reconstructing the covariates.

The variance also satisfies the state-dependent noise model. Suppose
$
\E\!\left[
\|\widetilde\theta_\lambda\|^2
\widetilde\theta_\lambda\widetilde\theta_\lambda^\top
\right]
\preceq
\varsigma_\lambda\E[\theta\theta^\top], 
$
and let
$
\bar\sigma_{\lambda,*}^2
:=
\E\left\|
\cG_\lambda(x^*;\widetilde\theta_\lambda,y)-g_\lambda(x^*)
\right\|^2.
$
Using the $\bar r$-Lipschitz continuity of $u$ and strong monotonicity,
$
f(x) - f^*
\ge
\frac{\underline r}{2}
(x-x^*)^\top\E[\theta\theta^\top](x-x^*),
$
we obtain a similar bound as in Ineq.~\eqref{eq:label-state-noise}, i.e.,
\begin{equation}
\E\|\cG_\lambda(x,\xi)-g_\lambda(x)\|^2
\le
\tfrac{4\bar r^2\varsigma_\lambda}{\underline r}[f(x) - f^*]
+2\bar\sigma_{\lambda,*}^2,
\label{eq:completion-state-noise_glm}
\end{equation}
which verifies the state-dependent noise assumption \eqref{eq:variance}.

\paragraph{Linear regression as a special case.} Linear regression is recovered by taking $u(t)=t$. In this case, 
$Q(x)=\Sigma := \E[\theta\theta^\top]$ and
$Q_\lambda(x)=\Sigma_\lambda := \E[\widetilde\theta_\lambda\widetilde\theta_\lambda^\top]$, thus Eq.~\eqref{eq:completion-bias-decomp_glm} reduces to the Hessian-mismatch decomposition 
\begin{equation}
	g_\lambda(x)-\nabla f(x)
	=b_\lambda+(\Sigma_\lambda-\Sigma)(x-x^*),
	\label{eq:completion-bias-decomp}
\end{equation}
and $\cB_\lambda
\le \|b_\lambda\|+\|\Sigma_\lambda-\Sigma\|D$.   The variance can also be characterized in a more fine-grained manner. Define $M_{\lambda,\xi}:=\widetilde\theta_\lambda\widetilde\theta_\lambda^\top-H_\lambda$. We have
\[
\cG_\lambda(x;\widetilde\theta_\lambda,y)-g_\lambda(x)
=M_{\lambda,\xi}(x-x^*)+\cG_\lambda(x^*;\widetilde\theta_\lambda,y)-g_\lambda(x^*).
\]
Define
$
\nu_\lambda^2
:=\lambda_{\max}\!\left(\E[M_{\lambda,\xi}^\top M_{\lambda,\xi}]\right)$ and 
$
\bar\sigma_{\lambda,*}^2:=\E\|\cG_\lambda(x^*;\widetilde\theta_\lambda,y)-g_\lambda(x^*)\|^2.
$
Using strong convexity yields
\begin{equation}
	\E\|\cG_\lambda(x,\xi)-g_\lambda(x)\|^2
	\le
	\tfrac{4\nu_\lambda^2}{\mu}[f(x) -f^*]+2\bar\sigma_{\lambda,*}^2,
	\label{eq:completion-state-noise}
\end{equation}
which verifies the state-dependent noise assumption \eqref{eq:variance}.
This example directly matches our LLM-completion experiments: different models and prompts induce different reconstructed datasets, costs, bias levels, and state-dependent variance.

\section{Known bias: cost-aware oracle-switching}\label{sec:known_bias}
In this section, we introduce a multistage adaptive oracle switching strategy assuming the bias parameters $\{\cB_\lambda\}_{\lambda=1}^\ell$ are known. 
To enable the design of the multistage meta-algorithm, we first assume access to a stochastic optimization subroutine that can reduce the current expected squared distance to the optimum by a constant factor at a predictable cost. A concrete choice of an efficient subroutine will be discussed in Subsection~\ref{subsec:sge}.

\begin{definition}[Distance-contraction subroutine]\label{def:contraction-routine}
Consider an optimization subroutine
$
\mathsf{Contract}(y,R;\SO_\lambda),
$
which takes an initial point $y$, a target radius $R$, and a stochastic oracle $\SO_\lambda$ as inputs. Associated with the subroutine are a bias-admissibility condition $\mathsf{Adm}_{\rm C}(R,\cB_\lambda)$ and an oracle complexity bound $T(R,\lambda)$.
We say that $\mathsf{Contract}$ is a valid distance-contraction subroutine if, whenever
\[
\E\|y-x^*\|^2\le R^2
\quad\text{and}\quad
\mathsf{Adm}_{\rm C}(R,\cB_\lambda)\text{ holds},
\]
it uses at most $T(R,\lambda)$ calls to $\SO_\lambda$ and returns $y^+$ satisfying $\E\|y^+-x^*\|^2\le \frac{R^2}{2}.$
\end{definition}

Here $\mathsf{Adm}_{\rm C}(R,\cB_\lambda)$ represents the bias requirement needed for the stochastic oracle SO$_\lambda$ to achieve the desired contraction, which depends on oracle SO$_\lambda$ only through its bias level $\cB_\lambda$. In contrast, the complexity bound $T(R,\lambda)$ depends on problem parameters such as $L$ and $\mu$, and other oracle characteristics such as its variance parameters $\sigma_{\lambda,*}^2$ and $\cL_\lambda$. The initial point $y$ in Definition~\ref{def:contraction-routine} may itself be random. The expectation is taken over both the randomness of $y$ and the fresh randomness used by the subroutine. 

Algorithm~\ref{alg:known} requires only this input--output guarantee as in Definition~\ref{def:contraction-routine}, so the inner subroutine may be the classic stochastic approximation type methods, variance-reduced methods, or any other procedure with a known bias-admissibility condition and sample complexity budget. In particular, at stage $s$, given
$
\E\|y^{s-1}-x^*\|^2\le R_{s-1}^2,
$
the algorithm selects, among the admissible oracles, the one with the lowest cost for producing the next contraction.

\begin{algorithm}[h]
\caption{Cost-aware contraction meta-algorithm with known bias}\label{alg:known}
\begin{algorithmic}[1]\small
\STATE \textbf{Input:} initial point $y^0$, radius $R_0$, oracle pool $\cP=\{\SO_\lambda\}_{\lambda=1}^\ell$, contraction subroutine $\mathsf{Contract}$.
\FOR{$s=1,2,\ldots$}
\STATE $\qquad\cP_s\gets\{\lambda\in\cP:\mathsf{Adm}_{\rm C}(R_{s-1},\cB_\lambda)\text{ holds}\}$; stop if $\cP_s=\varnothing$.
\STATE $\qquad\lambda_s\gets\arg\min_{\lambda\in\cP_s} c_\lambda T(R_{s-1},\lambda)$.
\STATE $\qquad y^s\gets\mathsf{Contract}(y^{s-1},R_{s-1};\SO_{\lambda_s})$.
\STATE $\qquad R_s^2\gets R_{s-1}^2/2$.
\ENDFOR
\end{algorithmic}
\end{algorithm}

A straightforward convergence guarantee for Algorithm~\ref{alg:known} is stated as follows.
\begin{proposition}[Stagewise convergence]\label{prop:known-restart}
Suppose $\E\|y^0-x^*\|^2\le R_0^2$ and the admissible set $\cP_s$ in Algorithm~\ref{alg:known} is nonempty at every completed stage. Then
\[
    \E\|y^s-x^*\|^2\le R_s^2=2^{-s}R_0^2.
\]
Consequently, after $K=\lceil\log_2(R_0^2/\epsilon)\rceil$ stages, we have
$\E\|y^K-x^*\|^2\le\epsilon$.
\end{proposition}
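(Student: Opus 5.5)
The plan is an induction on the stage index $s$, using only the input--output guarantee of Definition~\ref{def:contraction-routine}. The base case $s=0$ is the hypothesis $\E\|y^0-x^*\|^2\le R_0^2$.

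For the inductive step, assume $\E\|y^{s-1}-x^*\|^2\le R_{s-1}^2$ at a completed stage $s$. Since $\cP_s$ is assumed nonempty, the selected index $\lambda_s\in\cP_s$ satisfies $\mathsf{Adm}_{\rm C}(R_{s-1},\cB_{\lambda_s})$ by construction of $\cP_s$. Both hypotheses of Definition~\ref{def:contraction-routine} therefore hold with $y=y^{s-1}$, $R=R_{s-1}$ and oracle $\SO_{\lambda_s}$. The definition then gives
\[
\E\|y^s-x^*\|^2\le R_{s-1}^2/2=R_s^2.
\]
Unrolling the update $R_s^2=R_{s-1}^2/2$ yields $R_s^2=2^{-s}R_0^2$.

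The one point needing care is randomness. The input $y^{s-1}$ is random, and the index $\lambda_s$ must not depend on that randomness. Here $\lambda_s$ is a function only of $R_{s-1}$ and the known constants $\cB_\lambda$, $c_\lambda$, $T(\cdot,\cdot)$. Since $R_{s-1}$ is deterministic, $\lambda_s$ is deterministic as well. The definition explicitly allows a random initial point, with expectation taken jointly over $y$ and the subroutine's fresh randomness. So the guarantee applies directly, with no conditioning argument. I expect this to be the only step needing a remark.

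For the final claim, take $K=\lceil\log_2(R_0^2/\epsilon)\rceil$. Then $2^{-K}\le \epsilon/R_0^2$, so $\E\|y^K-x^*\|^2\le 2^{-K}R_0^2\le\epsilon$. If $R_0^2\le\epsilon$, then $K\le 0$ and the claim holds trivially from $y^0$.
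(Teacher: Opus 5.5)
Your proof is correct and is the same induction the paper uses: at each completed stage the selected $\lambda_s$ is admissible, so Definition~\ref{def:contraction-routine} gives $\E\|y^s-x^*\|^2\le R_{s-1}^2/2=R_s^2$. Your remarks on why $\lambda_s$ is deterministic and your explicit check of the choice of $K$ (including the case $R_0^2\le\epsilon$) only spell out details the paper leaves implicit.
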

\begin{proof}
The claim follows by induction. At stage $s$, the selected oracle $\lambda_s$ is admissible, so Definition~\ref{def:contraction-routine} guarantees
$
\E\|y^s-x^*\|^2\le R_{s-1}^2/2=R_s^2.
$
\end{proof}
For simplicity, let us consider the unbiased setting where the bias admissibility condition $\mathsf{Adm}_{\rm C}(R,\cB_\lambda)$ holds for any $R>0$ and $\lambda \in [\ell]$. For $\epsilon >0$, to find a solution $\bar y$ such that $\bbe\|\bar y - x^*\|^2\leq \epsilon$ requires $K=\lceil\log_2(R_0^2/\epsilon)\rceil$ stages. The total cumulative cost of running Algorithm~\ref{alg:known} is
\begin{align*}
\Cost_{\rm adap}(\epsilon)=\sum_{s=1}^K\min_\lambda T(R_{s-1}, \lambda).
\end{align*}
In comparison, the total cumulative cost of always using the best fixed oracle in each stage can be bounded by
\[
\Cost_{\rm fixed}(\epsilon)=\min_\lambda\sum_{s=1}^KT(R_{s-1}, \lambda).
\]
Therefore, we always have $\Cost_{\rm adap}(\epsilon)\le\Cost_{\rm fixed}(\epsilon)$. The largest potential improvement for $\Cost_{\rm adap}(\epsilon)$ compared with $\Cost_{\rm fixed}(\epsilon)$ will be discussed after presenting the stochastic gradient extrapolation (SGE) subroutine in the next subsection.

\subsection{Biased SGE: an efficient contraction subroutine}\label{subsec:sge}
We next show that stochastic gradient extrapolation (SGE) can serve as an efficient subroutine under our biased state-dependent noise model. Specifically, Algorithm~\ref{alg:sge} extends the existing design and analyses of SGE \citep{lan2018rge,ilandarideva2025state}  by allowing persistent gradient bias and incorporating strong convexity through an additional regularizer $\tfrac{\mu\alpha_t}{2}\|x_{t-1}-z\|^2$. This strongly convex variant of SGE will be reused in Subsection~\ref{subsec:rr-sge}, as a subroutine for the recursive regularization algorithm for gradient minimization. 

In this subsection, we consider a fixed stochastic oracle $\lambda$. Therefore, for the sake of simplicity, we use the simplified notation $\cG$ to denote the selected stochastic first-order oracle and $g(x) = \bbe[\cG(x, \xi)]$, and use $\cL$, $\sigma_*^2$, and $\cB$ to denote its variance and bias parameters. Additionally, we assume $x^*$ is an interior point of the feasible region $X$, so that $\nabla f(x^*) = 0$.

\begin{algorithm}[h]
\caption{Stochastic gradient extrapolation (SGE)}\label{alg:sge}
\begin{algorithmic}[1]\small
\STATE \textbf{Input:} $x_0=x_{-1}=z_0$ and parameters $\{\alpha_t,\eta_t,\tau_t\}$.
\STATE{\textbf{for} $t=1,2,\ldots$ \textbf{do}}
\STATE $\qquad \widetilde G_t\gets G_{t-1}(x_{t-1})+\alpha_t\big(G_{t-1}(x_{t-1})-G_{t-1}(x_{t-2})\big)$.
\STATE $\qquad z_t\gets\arg\min_{z\in X}\{\inner{\widetilde G_t}{z}+\tfrac{\mu\alpha_t}{2}\|x_{t-1}-z\|^2+\tfrac{\eta_t}{2}\|z_{t-1}-z\|^2\}$.
\STATE $\qquad x_t\gets(z_t+\tau_t x_{t-1})/(1+\tau_t)$ and draw $G_t(\cdot) = \tfrac{1}{m} \sum_{i=1}^{m} \cG(\cdot, \xi_{t,i})$.
\end{algorithmic}
\end{algorithm}


The convergence guarantees of Algorithm~\ref{alg:sge} are characterized in the following theorem.
\begin{theorem}[Biased SGE]\label{thm:sge}
Let $\{x_t\}$ be generated by Algorithm~\ref{alg:sge}. 
Set $\alpha_t=\frac{t-1}{t}$, $\tau_t=\frac{t-1}{3}$, $\eta_t=\frac{\eta}{t}$ for $t\le2$, and $\eta_t=\frac{\eta}{t}+\frac{\mu(t-3)}{2}$ for $t\ge3$, where
$\eta\ge\max\{24L,\frac{54\cL}{m} + \frac{162\cL^2}{\mu m^2}\}$. Then\footnote{The second inequality in \eqref{eq:sge-rate} is proved using $\nabla f(x^*) = 0$, and the first inequality is proved without this assumption.}, for $k\ge3$,
\begin{align}
\E[f(x_k)-f^*]
\le &\left(
\tfrac{72L}{k(k+2)}
+\tfrac{162\cL}{mk(k+2)}
+\tfrac{486\cL^2}{\mu m^2k(k+2)}
\right)\|x_0-x^*\|^2 + \tfrac{2}{9k(k+2)}[f(x_0)-f^*]+\tfrac{192\sigma_*^2}{\mu m(k+2)}
+\tfrac{216\cB^2}{\mu}\nn\\
\le&
\left(
\tfrac{73L}{k(k+2)}
+\tfrac{162\cL}{mk(k+2)}
+\tfrac{486\cL^2}{\mu m^2k(k+2)}
\right)\|x_0-x^*\|^2 +\tfrac{192\sigma_*^2}{\mu m(k+2)}
+\tfrac{216\cB^2}{\mu}. \label{eq:sge-rate}
\end{align}
In particular,  for $m = 1$ and $k \geq \max \{98\sqrt{\tfrac{L}{\mu}}, 144\sqrt{\tfrac{\cL}{\mu}}, \tfrac{102\cL}{\mu}, \tfrac{3072\sigma_*^2}{\mu^2\|x_0-x^*\|^2}\}$, we have
\begin{align}\label{sge_contraction}
\bbe\|x_k - x^*\|^2 \leq \tfrac{2}{\mu}\bbe[f(x_k)-f^*] \leq \tfrac{1}{4}\|x_0-x^*\|^2 + \tfrac{432 \cB^2}{\mu^2}.
\end{align}
\end{theorem}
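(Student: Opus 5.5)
The plan is to follow the standard SGE/extrapolation analysis of \citet{lan2018rge,ilandarideva2025state}. The argument combines a one-step inequality for the prox step in Line~4, a weighted telescoping with weights $\theta_t\propto t$ (here $\theta_t=t$, so $\theta_t\alpha_t=\theta_{t-1}$), and treatment of the error $\delta_t:=G_{t}(x_t)-\nabla f(x_t)$. This error has a martingale part $G_t(x_t)-g(x_t)$ with state-dependent variance and a deterministic bias part $g(x_t)-\nabla f(x_t)$ of norm at most $\cB$.

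\textbf{Step 1 (one-step inequality).} By the optimality of $z_t$ and the $(\mu\alpha_t+\eta_t)$-strong convexity of the prox objective, for every $z\in X$,
\[
\inner{\widetilde G_t}{z_t-z}+\tfrac{\mu\alpha_t}{2}\|x_{t-1}-z_t\|^2+\tfrac{\eta_t}{2}\|z_{t-1}-z_t\|^2+\tfrac{\mu\alpha_t+\eta_t}{2}\|z_t-z\|^2\le \tfrac{\mu\alpha_t}{2}\|x_{t-1}-z\|^2+\tfrac{\eta_t}{2}\|z_{t-1}-z\|^2 .
\]
The relation $x_t=(z_t+\tau_tx_{t-1})/(1+\tau_t)$ together with $L$-smoothness and $\mu$-strong convexity converts $\inner{\nabla f(x_t)}{z_t-z}$ into $(1+\tau_t)[f(x_t)-f(z)]-\tau_t[f(x_{t-1})-f(z)]$, up to the terms $\frac{\mu}{2}\|x_t-z\|^2$ and $\frac{L(1+\tau_t)}{2}\|x_t-x_{t-1}\|^2$. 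Next I write $\widetilde G_t=\nabla f(x_t)+[\nabla f(x_{t-1})-\nabla f(x_t)]+\alpha_t[\nabla f(x_{t-1})-\nabla f(x_{t-2})]+\delta_{t-1}+\alpha_t(\delta_{t-1}-\delta'_{t-1})$, where $\delta'_{t-1}:=G_{t-1}(x_{t-2})-\nabla f(x_{t-2})$. The extrapolation terms telescope after multiplying by $\theta_t$, since $\theta_t\alpha_t=\theta_{t-1}$. The leftover inner products $\inner{\nabla f(x_t)-\nabla f(x_{t-1})}{z_t-z_{t-1}}$ are absorbed by $\frac{\eta_t}{2}\|z_t-z_{t-1}\|^2$ via Young's inequality and $L$-smoothness; this is where $\eta\ge24L$ and $\tau_t=(t-1)/3$ enter.

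\textbf{Step 2 (noise and bias).} I take $z=x^*$. The zero-mean part of $\delta_{t-1}$ is paired with $z_{t-1}-x^*$, which is $\mathcal F_{t-2}$-measurable given the fresh sample, so it vanishes in expectation. The remaining pairing with $z_t-z_{t-1}$ is bounded by Young's inequality against $\frac{\eta_t}{4}\|z_t-z_{t-1}\|^2$. This produces $\frac{\theta_t}{\eta_t}\E\|G_{t-1}-g\|^2\le\frac{\theta_t}{\eta_t m}(\cL[f(x_{t-1})-f^*]+\sigma_*^2)$. The differences $G_{t-1}(x_{t-1})-G_{t-1}(x_{t-2})$ are handled similarly, using \eqref{eq:variance} at both points. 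The $\cL$-terms are absorbed into the left-hand function-value terms $\theta_t(1+\tau_t)\E[f(x_t)-f^*]$ because $\eta_t\ge \eta/t+\mu(t-3)/2$. The bound $\eta\ge 54\cL/m+162\cL^2/(\mu m^2)$ is exactly what permits this. The $\cL^2/\mu$ part arises when absorbing against the $\mu$-terms, where I use $\|x-x^*\|^2\le\frac{2}{\mu}[f(x)-f^*]$ only in the first inequality's derivation and not $\nabla f(x^*)=0$.

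The bias part pairs as $\theta_t\inner{g-\nabla f}{z_t-x^*}$. I bound it by $\theta_t\big(\frac{c\cB^2}{\mu}+\frac{\mu}{c'}\|z_t-x^*\|^2\big)$ and absorb the quadratic term into the strong-convexity surplus $\frac{\mu\alpha_t+\eta_t}{2}\|z_t-x^*\|^2$ that the $\mu(t-3)/2$ part of $\eta_t$ leaves. Summing gives $\sum_t\theta_t\cB^2/\mu\sim k^2\cB^2/\mu$. After dividing by the left weight $\theta_k(1+\tau_k)\sim k^2$, this becomes a constant times $\cB^2/\mu$, which is the non-vanishing $216\cB^2/\mu$. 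The $\sigma_*^2$ sum $\sum_t \theta_t/(\eta_t m)\sim k/(\mu m)$ likewise yields $\frac{192\sigma_*^2}{\mu m(k+2)}$. The initial terms give $\eta\|x_0-x^*\|^2/k^2$ and $f(x_0)-f^*$, with constant $\frac{2}{9k(k+2)}$ from $\tau_1$-type weights. The second inequality in \eqref{eq:sge-rate} then follows from $f(x_0)-f^*\le\frac L2\|x_0-x^*\|^2$ when $\nabla f(x^*)=0$.

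\textbf{Step 3 (contraction).} Set $m=1$ and multiply \eqref{eq:sge-rate} by $2/\mu$, using strong convexity for the first inequality of \eqref{sge_contraction}. Each of the four terms is then made at most $\frac1{16}\|x_0-x^*\|^2$: $146L/(\mu k^2)$ by $k\ge98\sqrt{L/\mu}$, $324\cL/(\mu k^2)$ by $k\ge144\sqrt{\cL/\mu}$, $972\cL^2/(\mu^2k^2)$ by $k\ge102\cL/\mu$, and $384\sigma_*^2/(\mu^2 k)$ by the last threshold. The bias term gives $432\cB^2/\mu^2$.

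\textbf{Main obstacle.} The hardest part is the bookkeeping in Step 2. The state-dependent variance feeds $f(x_{t-1})-f^*$ and $f(x_{t-2})-f^*$ back into the recursion, and these must be absorbed simultaneously with the bias cross term while keeping $\eta_t$'s $\mu t$ growth compatible with the extrapolation telescoping. I would first fix $\theta_t=t$ and verify the coefficient inequalities term by term, adjusting the constants $24,54,162$ as needed.
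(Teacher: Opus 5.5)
Your overall architecture matches the paper: weights $\theta_t=t$, the zero-mean noise paired with $z_{t-1}-x^*$ vanishing in expectation, and the $\mathcal{L}$-part of the variance absorbed into the function-value slack $\theta_t(1+\tau_t)-\theta_{t+1}\tau_{t+1}=t/3$. The gap is the bias step in Step~2, which fails for the prescribed parameters. After summing the $\theta_t$-weighted prox inequalities, the net coefficient of $\|z_t-x^*\|^2$ for $t<k$ is $-\tfrac12\bigl[\theta_t(\eta_t+\mu\alpha_t)-\theta_{t+1}\eta_{t+1}\bigr]$. For $t\ge3$ you have $\theta_t\eta_t=\eta+\tfrac{\mu t(t-3)}{2}$ and $\theta_t\mu\alpha_t=\mu(t-1)$. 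Hence $\theta_t(\eta_t+\mu\alpha_t)=\eta+\tfrac{\mu(t+1)(t-2)}{2}=\theta_{t+1}\eta_{t+1}$. Equality also holds at $t=1$, and only $t=2$ leaves $\tfrac{\mu}{2}\|z_2-x^*\|^2$. So the $\mu(t-3)/2$ growth of $\eta_t$ leaves no surplus. It is the largest growth compatible with $\eta_t\le\alpha_t(\eta_{t-1}+\mu\alpha_{t-1})$, and it moves the entire strong-convexity gain into the terminal coefficient $P_k\asymp\mu k^2$. The terms $\theta_t\tfrac{\mu}{c'}\|z_t-x^*\|^2$ produced by your Young's inequality therefore have nowhere to be absorbed. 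The function-value slack $t/3$ cannot take them either. It is already reserved for the state-dependent variance, and it controls $x_t$, not $z_t$. Since $z_t-x^*=(1+\tau_t)(x_t-x^*)-\tau_t(x_{t-1}-x^*)$, converting costs a factor of order $t$. After dividing by $A_k\asymp k^2$, the bias contribution would then be of order $k^2\mathcal{B}^2/\mu$ rather than $216\mathcal{B}^2/\mu$.

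The missing idea is a self-bounding argument. Keep the bias term linear: $\mathcal{B}\sum_{t=1}^k\theta_t(1+2\alpha_t)\mathbb{E}\|z_t-x^*\|=\mathcal{B}\sum_{t=1}^k(3t-2)\mathbb{E}\|z_t-x^*\|$. Then apply the telescoped inequality itself at each earlier horizon $t\le k$. It gives $\tfrac{P_t}{2}\mathbb{E}\|z_t-x^*\|^2\le R_t\le R_k$, where $R_t:=\Gamma_t+\mathcal{B}\sum_{s\le t}(3s-2)\mathbb{E}\|z_s-x^*\|$ is nondecreasing in $t$. Here $P_t=\tfrac{\eta}{2}+\tfrac{\mu t(t-1)}{4}\ge\tfrac{\mu(3t-2)^2}{36}$ because $\eta\ge24L\ge24\mu$. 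Hence $(3t-2)\mathbb{E}\|z_t-x^*\|\le6\sqrt{2R_k/\mu}$, and so $R_k\le\Gamma_k+6k\mathcal{B}\sqrt{2R_k/\mu}\le\Gamma_k+\tfrac{R_k}{2}+\tfrac{36k^2\mathcal{B}^2}{\mu}$, that is, $R_k\le2\Gamma_k+\tfrac{72k^2\mathcal{B}^2}{\mu}$. Dividing by $A_k=k(k+2)/3$ gives the $216\mathcal{B}^2/\mu$ floor. The factor $2$ on $\Gamma_k$ is also why $72L$ and $192\sigma_*^2$ are twice what the bare recursion gives.

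Two smaller points. In Step~1, the form of smoothness you need is $f(x_{t-1})\ge f(x_t)+\langle\nabla f(x_t),x_{t-1}-x_t\rangle+\tfrac{1}{2L}\|\nabla f(x_t)-\nabla f(x_{t-1})\|^2$. Its negative term $-\tfrac{\theta_t\tau_t}{2L}\|\nabla f(x_t)-\nabla f(x_{t-1})\|^2$ is what cancels the extrapolation cross terms after Young's inequality, not an $L\|x_t-x_{t-1}\|^2$-type correction. In Step~3, the $\mathcal{L}^2$ and $\sigma_*^2$ terms are bounded by about $972/102^2\approx0.093$ and $384/3072=1/8$ times $\|x_0-x^*\|^2$, not $1/16$ each. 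The four terms still sum to about $0.249<1/4$, so the contraction conclusion survives.
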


\noindent\textit{Proof sketch.} 
To prove the claim, we need to control the state-dependent
variance, and also bound the error introduced by the persistent
oracle bias. We begin with the SGE recursion stated in Proposition \ref{prop:general_convergence}, whose proof follows from a similar analysis of SGE as in \cite{ilandarideva2025state} (see Appendix \ref{subsec:proof_of_proposition}).
With this recursion, we can handle the state-dependent variance and systematic bias transparently. Our parameter choices ensure that the state-dependent variance
terms can be absorbed into the converging function-value terms in this
recursion. Taking the telescoping sum over the recursions yields
\[
A_k \mathbb{E}[f(x_k)-f^\ast]
+\frac{P_k}{2}\mathbb{E}\|z_k-x^\ast\|^2
\leq
\Gamma_k+B\sum_{t=1}^k(3t-2)\mathbb{E}\|z_t-x^\ast\|,
\]
where $A_k$ and $P_k$ grow quadratically in $k$, and $\Gamma_k$
contains the initial-error and residual-variance terms. To bound the accumulated bias, observe that the same inequality
controls the distance of each $z_t$ from $x^\ast$. Thus, the
distances appearing on the right-hand side can themselves be
bounded in terms of the accumulated error. Applying Young's
inequality provides a bound of the form
$2\Gamma_k+O(k^2B^2/\mu)$, and dividing both sides of the inequality by $A_k$ yields the stated
function-value bound. By further using the strong convexity property, we can obtain the
distance-contraction guarantee. The full proof is deferred to Appendix~\ref{sec:proof_sge_thm}.\qed

\subsection{Combining the oracle-switching meta-algorithm with the SGE subroutine}
Theorem~\ref{thm:sge} provides an explicit implementation of Definition~\ref{def:contraction-routine}. For SGE, the bias-admissibility condition is
\begin{equation}
    \mathsf{Adm}_{\rm C}^{\rm SGE}(R,\cB)
    \quad\Longleftrightarrow\quad
    \cB\le \tfrac{\sqrt{3}\mu R}{72},
    \label{eq:sge-admissibility}
\end{equation}
and its oracle complexity is
\begin{equation}
T^{\rm SGE}(R,\lambda)
=
\max\left\{
98\sqrt{\tfrac{L}{\mu}},
144\sqrt{\tfrac{\cL_\lambda}{\mu}},
\tfrac{102\cL_\lambda}{\mu},
\tfrac{3072\sigma_{\lambda,*}^2}{\mu^2R^2}
\right\}.
\label{eq:stage-complexity}
\end{equation}
Thus, we can use $\mathsf{Contract}^{\rm SGE}(y,R;\SO_\lambda)$ to represent running Algorithm~\ref{alg:sge} from a point $y$ with oracle $\SO_\lambda$ for $T^{\rm SGE}(R,\lambda)$ iterations. By Theorem~\ref{thm:sge}, it achieves the contraction in expected squared distance to the optimal solution whenever $\mathsf{Adm}_{\rm C}^{\rm SGE}(R,\cB_\lambda)$ holds. 
Consequently, the adaptive oracle switching algorithm (Algorithm~\ref{alg:known}) can be instantiated with $\mathsf{Contract}^{\rm SGE}$.

We next discuss the cost comparison between Algorithm \ref{alg:known} and using the best fixed oracle in all stages, quantifying the benefit of adaptive switching.
\begin{corollary}[Comparison with the best fixed oracle]\label{corollary_known}
Suppose all oracles are unbiased, i.e., $B_\lambda = 0,~\forall \lambda$. Define $A_\lambda=c_\lambda \max \left\{98\sqrt{\frac{L}{\mu}}, 144\sqrt{\tfrac{\cL_\lambda}{\mu}}, \tfrac{102\cL_\lambda}{\mu}\right\}$ and $B_\lambda=c_\lambda \cdot \frac{3072\sigma^2_{\lambda,*}}{\mu^2R_0^2}$. For $\epsilon >0$, let $K=\lceil\log_2(R_0^2/\epsilon)\rceil$. Then the total costs of  Algorithm \ref{alg:known} and using the best-fixed oracle throughout satisfy
\begin{align}\label{total_cost_results}
\Cost_{\rm adap}(\epsilon)&=\sum_{s=1}^K\min_\lambda\max\{A_\lambda,B_\lambda2^{s-1}\}, \nn\\
\Cost_{\rm fixed}(\epsilon)&=\min_\lambda\sum_{s=1}^K\max\{A_\lambda,B_\lambda2^{s-1}\} = \mathcal{O}\left( \min_{\lambda} \left\{c_\lambda \cdot \left(\sqrt{\tfrac{L}{\mu}}\log_2\tfrac{R_0^2}{\epsilon} +\tfrac{\cL_\lambda}{\mu}\log_2\tfrac{R_0^2}{\epsilon} + \tfrac{\sigma_{\lambda,*}^2}{\mu^2 \epsilon}\right)\right\}  \right),
\end{align}
hence $\Cost_{\rm adap}(\epsilon)\le\Cost_{\rm fixed}(\epsilon)$. 
Moreover, there exist instances for which $ \frac{\Cost_{\rm fixed}(\epsilon)}
     {\Cost_{\rm adap}(\epsilon)}= \Theta\!\left(\tfrac{K}{\log K}\right) = \Theta\!\left(
\tfrac{\log(R_0^2/\epsilon)}
{\log\log(R_0^2/\epsilon)}
\right) $, and this order is tight for stage costs of the form
$\max\{A_\lambda,B_\lambda2^{s-1}\}$.
\end{corollary}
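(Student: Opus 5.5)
We start from the stage-cost formula. With $R_{s-1}^2=2^{-(s-1)}R_0^2$, substituting into \eqref{eq:stage-complexity} gives
\[
\tfrac{3072\sigma_{\lambda,*}^2}{\mu^2R_{s-1}^2}=\tfrac{3072\sigma_{\lambda,*}^2}{\mu^2R_0^2}\,2^{s-1}.
\]
Multiplying by $c_\lambda$ shows that the stage-$s$ cost of oracle $\lambda$ is exactly $\max\{A_\lambda,B_\lambda 2^{s-1}\}$. All oracles are unbiased, so every $\cP_s$ is the whole pool. Proposition~\ref{prop:known-restart} therefore applies for all $K$ stages, and the two displayed cost expressions follow directly. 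The inequality $\Cost_{\rm adap}\le\Cost_{\rm fixed}$ is just $\sum_s\min_\lambda\le\min_\lambda\sum_s$.

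For the big-O bound on $\Cost_{\rm fixed}$, fix $\lambda$ and use $\max\{a,b\}\le a+b$. The $A_\lambda$ part contributes $KA_\lambda$. We have $K=O(\log_2(R_0^2/\epsilon))$, and $\max\{\sqrt{L/\mu},\sqrt{\cL_\lambda/\mu},\cL_\lambda/\mu\}\le \sqrt{L/\mu}+\cL_\lambda/\mu+\sqrt{\cL_\lambda/\mu}$. The term $\sqrt{\cL_\lambda/\mu}$ is absorbed either by $\cL_\lambda/\mu$ (when $\cL_\lambda\ge\mu$) or by $\sqrt{L/\mu}$ (when $\cL_\lambda<\mu\le L$). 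The geometric part gives
\[
B_\lambda\sum_{s=1}^K2^{s-1}\le B_\lambda2^K\le 2B_\lambda R_0^2/\epsilon=O\!\left(c_\lambda\sigma_{\lambda,*}^2/(\mu^2\epsilon)\right).
\]
Minimizing over $\lambda$ gives the stated bound.

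For the separation, it suffices to build instances of the abstract form $\max\{A_\lambda,B_\lambda2^{s-1}\}$. The routine translation into parameters $c_\lambda,\cL_\lambda,\sigma_{\lambda,*}$ uses the free scaling by $c_\lambda$ and $\sigma_{\lambda,*}$, keeping $\cL_\lambda$ small so that $A_\lambda$ is set by $c_\lambda$. Take oracles $\lambda=0,1,\dots,K$ with $A_\lambda=2^\lambda$ and $B_\lambda=2^{-\lambda}\cdot 2^{\lambda}/2^{\lambda}$; concretely, we choose $B_\lambda$ so that oracle $\lambda$ costs about $2^\lambda$ on stages up to about $2\lambda$ and grows afterwards. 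One clean choice is $A_j=2^{j}$ and $B_j=2^{-j}$. Then the stage-$s$ cost of oracle $j$ is $\max\{2^j,2^{s-1-j}\}$, minimized at $j\approx (s-1)/2$ with value $\approx 2^{(s-1)/2}$. Hence $\Cost_{\rm adap}=\Theta(2^{K/2})$, while any fixed $j$ pays $\ge\max\{K2^j,2^{K-j}\}$.

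This choice only gives a ratio of order $\sqrt K$, so it needs tuning, and I expect this step to be the main obstacle. The goal is to make the adaptive cost dominated by a few stages while every fixed oracle is expensive over $\Theta(K/\log K)$ stages. I would try making the adaptive per-stage cost nearly flat, e.g.\ $\min_\lambda \max\{A_\lambda,B_\lambda2^{s-1}\}\approx 1$ for all $s$. Choose $A_\lambda=1$ and $B_\lambda$ tiny; but then one oracle serves all stages. So costs must grow polynomially: take the adaptive envelope $\approx s$, via oracles with $A_j=j$ and $B_j=j2^{-j}$. Each oracle is then cheap (cost $\le j$) exactly through stage $j$, and $\Cost_{\rm adap}\approx\sum_s s=\Theta(K^2)$. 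A fixed $j$ pays $Kj$ plus $\sum_{s>j}j2^{s-1-j}\approx j2^{K-j}$. Optimizing, $j\approx K-\log_2 K$ gives about $K^2$ as well, so the envelope must be steeper, with ratios tuned so that the optimal fixed $j$ loses a factor $K/\log K$. I would compute the general envelope $h(s)=\min_j\max\{A_j,B_j2^{s-1}\}$ and the fixed cost $\min_j\bigl(KA_j+B_j2^K\bigr)$, then choose $h(s)=2^{s}/s$-type growth. In that case $\Cost_{\rm adap}\approx 2^K/K$, while a fixed oracle must have $A_j\ge 2^j/j$ and pays $\gtrsim\min_j\max\{K2^j/j,2^K/K\cdot 2^{K-j}/\dots\}$, giving the ratio $K/\log K$.

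Finally, the tightness upper bound. For any instance, let $j^*$ be the adaptive choice at the last stage $K$. This oracle is fixed over all stages, and its cost at stage $s$ is at most $\max\{A_{j^*},B_{j^*}2^{K-1}\}$ on early stages and follows the geometric sum otherwise. I would bound it by splitting the stages into the last $\log_2 K$ stages and the earlier ones. On the earlier stages, charge against the last adaptive stage. On the final stages, use that geometric growth doubles at most $\log K$ times. This yields $\Cost_{\rm fixed}\le O(K/\log K)\,\Cost_{\rm adap}$.
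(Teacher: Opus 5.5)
Your first part is fine: the stage-cost formula, $\sum_s\min_\lambda\le\min_\lambda\sum_s$, and the $\mathcal O$ bound via $\max\{a,b\}\le a+b$ and $2^K\le 2R_0^2/\epsilon$.

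\textbf{Gap 1: no separating instance.} You never exhibit an instance with ratio $K/\log K$. The envelope you end up proposing, $d_s:=\min_\lambda\max\{A_\lambda,B_\lambda2^{s-1}\}\approx 2^s/s$, in fact gives a \emph{bounded} ratio. Indeed $\Cost_{\rm adap}\ge d_K\approx 2^K/K$, while the bound in the next paragraph gives $\Cost_{\rm fixed}\le 5Kd_{K-m}\lesssim 5\cdot 2^K/(K-m)$ with $m=\lceil\log_2K\rceil$. Any envelope that keeps growing geometrically puts the adaptive cost on the last stage, where one late oracle can match it.

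The missing idea is that the envelope must be negligible until stage $K-m$ and then \emph{flat} on the last $m=\Theta(\log K)$ stages. That flat level must come from an oracle whose floor $A_\lambda$ would be paid on all $K$ stages if it were fixed. The paper does this with two oracles:
\begin{itemize}
\item oracle 2 with $A_2=M$, $B_2=M2^{-K}$, which costs exactly $M$ at every stage;
\item oracle 1 with $A_1=M/K^2$, $B_1=M2^{m-K+1}$, whose geometric term reaches $M$ exactly at stage $K-m$.
\end{itemize}
Then $\Cost_{\rm adap}\le M/K+2M+mM=\Theta(M\log K)$. Fixing oracle 2 costs $MK$, and fixing oracle 1 costs at least $C_1(K)\ge M2^m\ge MK$.

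\textbf{Gap 2: wrong comparator in the tightness argument.} With $j^*$ optimal at stage $K$, monotonicity only gives $\Cost_{\rm fixed}\le Kd_K$ against $\Cost_{\rm adap}\ge d_K$, a ratio of $O(K)$. This cannot be improved for that comparator. Take a flat oracle with $A=M$, $B\le M2^{1-K}$, and a geometric oracle with negligible $A$ and $B2^{K-1}=1.01M$. Then $j^*$ is the flat oracle, and fixing it costs $KM$, yet $\Cost_{\rm adap}\approx 2M$.

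Your remark that the geometric part ``doubles at most $\log K$ times'' is the right ingredient, but apply it to an oracle $\bar\lambda$ attaining $d_{K-m}$. Then $A_{\bar\lambda}\le d_{K-m}$ and $B_{\bar\lambda}2^{K-m-1}\le d_{K-m}$, so fixing $\bar\lambda$ costs at most $Kd_{K-m}+B_{\bar\lambda}2^K\le (K+2^{m+1})d_{K-m}\le 5Kd_{K-m}$. The lower bound on the adaptive cost must come from the last $m+1$ stages, not the single last stage: since $d_s$ is nondecreasing, $\Cost_{\rm adap}\ge\sum_{s=K-m}^K d_s\ge (m+1)d_{K-m}$. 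Together these give the ratio bound $5K/(m+1)=O(K/\log K)$.
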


\noindent\textit{Proof sketch.}
The comparison with the best fixed oracle follows directly
from minimizing the cost at each stage: the sum of the
stagewise minima is no larger than the minimum of the sums.
It remains to show how large this improvement can be.

We first construct two oracles. The first has a smaller
coarse-scale cost but a larger residual-variance term, whereas
the second has a constant stage cost of $M$ throughout the
optimization horizon. The parameters are chosen so that the
first oracle is cheaper until the final
$m=\lceil\log_2 K\rceil$ stages. Thus, the adaptive policy
incurs a total cost of $\Theta(M\log K)$, while the best
fixed oracle incurs a cost of $MK$.

To show that this order cannot be improved, let $d_s$ denote
the minimum cost at stage $s$, and consider an oracle attaining
$d_{K-m}$. Its cost over all $K$ stages is at most
$O(Kd_{K-m})$. On the other hand, because $d_s$ is
nondecreasing, the adaptive policy pays at least
$(m+1)d_{K-m}$ over the final $m+1$ stages. Combining these
bounds gives an improvement of at most $O(K/\log K)$.
The construction with tight examples, and the complete bounds are presented
in Appendix~\ref{proof_corollary_known}. \qed

\vspace{0.2cm}
Notably, when ignoring the oracle cost, the sample complexity $\mathcal{O}\left( \sqrt{\tfrac{L}{\mu}}\log_2\tfrac{R_0^2}{\epsilon} +\tfrac{\cL_\lambda}{\mu}\log_2\tfrac{R_0^2}{\epsilon} + \tfrac{\sigma_{\lambda,*}^2}{\mu^2 \epsilon}\right) $ in \eqref{total_cost_results} matches the optimal complexity for stochastic optimization under state-dependent noise, supported by lower bounds in \cite{nemirovskij1983problem,woodworth2021even}. By adaptively switching among the available oracles, Algorithm~\ref{alg:known} can achieve more cost-efficient convergence while preserving such complexity structures.

\section{When is a better LLM worth paying for?}\label{sec:when-pay}
The cost-aware meta-algorithm (Algorithm~\ref{alg:known}) provides a simple economic interpretation of oracle fidelity. For any contraction subroutine with associated bias-admissibility condition $\mathsf{Adm}_{\rm C}$ and oracle complexity $T$, define
\begin{equation}
\cP(R):=\{\lambda:\mathsf{Adm}_{\rm C}(R,\cB_\lambda)\text{ holds}\},
\quad
C_\lambda(R):=c_\lambda T(R,\lambda).
\label{eq:admissible-stage-cost}
\end{equation}
The basic willingness-to-pay logic depends only on the admissibility rule and the cost of the next contraction.
Specifically, at radius $R$, an oracle is useful only if it is accurate enough to belong to $\cP(R)$, and among those oracles, it buys the cheapest next contraction. 

For the SGE implementation, $\cP(R)$ is given by condition \eqref{eq:sge-admissibility}. For simplicity, writing
\[
D_\lambda:=\max\left\{98\sqrt{\tfrac L\mu},
144\sqrt{\tfrac{\cL_\lambda}{\mu}},
\tfrac{102\cL_\lambda}{\mu}\right\},
\quad
E_\lambda:=\tfrac{3072\sigma_{\lambda,*}^2}{\mu^2},
\]
we have
\begin{equation}
C_\lambda(R)=c_\lambda\max\{D_\lambda,E_\lambda/R^2\}.
\label{eq:stage-cost-decomp}
\end{equation}
This decomposition separates three roles of fidelity. The bias determines \emph{whether} an oracle can support the requested accuracy; the $c_\lambda D_\lambda$ term governs its coarse-scale cost (or equivalently the cost in the non-asymptotic phase); and $c_\lambda E_\lambda/R^2$ governs its fine-scale cost as the shrinkage of $R$ (or equivalently the cost in the asymptotic phase). The bias requirement can be equivalently written as
$
R\ge R_\lambda^{\rm bias}:=\frac{72\cB_\lambda}{\sqrt3\mu},
$
so an oracle may be attractive early and become infeasible later.

The 10\% experiment in Table~\ref{tab:llm-motivation} makes the mechanism visible. Treating the reported bias diagnostic as the admissibility proxy used in the experiment and noticing that we use whitening to make $\mu=1$, then the bias threshold is $\sqrt3R/72$. At $R=200$, all displayed pipelines except Nano simple-7 pass the test, so a low-cost Nano pipeline is sufficient. At $R=100$, the threshold is about $2.41$, which removes the Nano pipelines and makes Luna the cheapest admissible high-fidelity alternative. At $R=80$, the threshold is about $1.92$, leaving Avg.~Haiku+Sonnet as the only displayed admissible pipeline. Price alone cannot rescue an oracle that fails this accuracy test.\footnote{
In practical implementations, one may calibrate a less conservative admissibility constant. Section 6.2 describes the rule used in our experiments; this empirical calibration is not used in our theoretical guarantees.}

\paragraph{Dominated oracles.}
The policy also identifies sources that would never be queried.
\begin{remark}[Domination]\label{rem:domination}
Oracle $\nu$ dominates oracle $\lambda$ if
\[
\cB_\nu\le\cB_\lambda,
\quad
c_\nu D_\nu\le c_\lambda D_\lambda,
\quad
c_\nu\sigma_{\nu,*}^2\le c_\lambda\sigma_{\lambda,*}^2.
\]
Then $\nu$ is admissible whenever $\lambda$ is, and $C_\nu(R)\le C_\lambda(R)$ for every $R$; hence Algorithm~\ref{alg:known} with SGE subroutine never selects $\lambda$.
\end{remark}
Therefore, a more expensive model is not valuable merely because it is nominally ``stronger'': it must improve a downstream quantity that matters at some optimization scale. In Table~\ref{tab:llm-motivation}, Claude Haiku~4.5 dominates Claude Sonnet~5
($\mathcal B_\lambda$: $2.01$ vs.\ $2.37$; $c_\lambda\sigma^2_{\lambda,*}$: $1.23\times10^{5}$
vs.\ $2.76\times10^{5}$; equal $\mathcal L_\lambda$), so Sonnet~5 is never selected --- under any stage schedule. Note that reduction of cost or bias achieved by this oracle, would have rendered it non-dominated in this experiment. 

\paragraph{Supplier's viewpoint: break-even prices.}
The oracle selection rule can be read from the supplier's side. For an admissible oracle $\lambda$, define its stage-$R$ break-even price
\begin{equation}
    c_\lambda^{\rm BE}(R)
    :=
    \frac{\displaystyle\min_{\nu\in\cP(R)\setminus\{\lambda\}}
    c_\nu T(R,\nu)}{T(R,\lambda)}.
    \label{eq:breakeven-price}
\end{equation}
At radius $R$, oracle $\lambda$ is price-competitive exactly when $c_\lambda\le c_\lambda^{\rm BE}(R)$, subject to admissibility.  For the SGE implementation, the exact expression is
\begin{equation}
    c_\lambda^{\rm BE}(R)
    =
    \frac{\displaystyle\min_{\nu\in\cP(R)\setminus\{\lambda\}}
    c_\nu\max\{D_\nu,E_\nu/R^2\}}
    {\max\{D_\lambda,E_\lambda/R^2\}}.
    \label{eq:breakeven-sge}
\end{equation}
This formula makes clear that a break-even price generally depends on both the state-dependent-noise term $D_\lambda$ and the residual-variance term $E_\lambda/R^2$, in addition to the bias-dependent competitor set $\cP(R)$. For a fixed competitor $\nu$, the corresponding pairwise break-even price is
\[
    c_{\lambda\leftarrow\nu}^{\rm BE}(R)
    =c_\nu\frac{\max\{D_\nu,E_\nu/R^2\}}
    {\max\{D_\lambda,E_\lambda/R^2\}}.
\]
If both oracles are in their coarse regimes, this reduces to $c_\nu D_\nu/D_\lambda$; if both are in their fine regimes, it reduces to
$c_\nu E_\nu/E_\lambda=c_\nu\sigma_{\nu,*}^2/\sigma_{\lambda,*}^2$. In a mixed regime it retains an explicit dependence on $R$.
Thus willingness to pay for a stochastic oracle, e.g., an LLM pipeline, is not a single global number: it depends on the current downstream accuracy and on the cost--fidelity frontier of the competing models. Eq.~\eqref{eq:breakeven-price} can therefore be used either by a buyer deciding whether to upgrade or by a provider asking what price a particular quality improvement can sustain. This is a stage-dependent value-of-information calculation: the same signal can have very different economic value at different downstream states \citep{howard1966information,sims2003rational}.


Coming back to our \texttt{Aspiring Minds} example in Table~\ref{tab:llm-motivation}. The numerical calculation below isolates the regime when the residual variance dominates. Specifically, suppose Sonnet~5 and the cheapest admissible competitor at radius $R$ both satisfy $E_\lambda/R^2\ge D_\lambda$. Then the break-even price depends only on $c_\nu\sigma_{\nu,*}^2$, thus the following numbers should be interpreted as \emph{fine-scale break-even prices}, rather than break-even prices valid at every radius.
Let $\lambda=7$ denote Sonnet~5 and let $\tau=\sqrt3\mu R/72$ be the bias threshold. Define
\[
E_Q(\tau):=\min\{c_\nu\sigma^2_{\nu,*}:\mathcal B_\nu\le\tau,\ \nu\neq 7\}.
\]
Whenever Sonnet and the minimizing competitor are both residual-variance dominated, its break-even price is
$
    c_7^{\rm BE,Q}(\tau)=\frac{E_Q(\tau)}{\sigma^2_{7,*}}.
$
By hypothetically varying Sonnet's bias while keeping its residual variance fixed at the value in Table~\ref{tab:llm-motivation}, we obtain the following step function:
\[
\frac{E_Q(\mathcal B_7)}{\sigma^2_{7,*}}=
\begin{cases}
\$4.54, & \mathcal B_7\in[1.86,2.01) \quad\text{(Avg.\ Haiku+Sonnet)},\\
\$1.41, & \mathcal B_7\in[2.01,2.31) \quad\text{(Haiku~4.5)},\\
\$0.82, & \mathcal B_7\in[2.31,2.87) \quad\text{(Luna)},\\
\$0.20, & \mathcal B_7\in[2.87,3.95) \quad\text{(Nano LR-10/LR-short-10)},\\
\$0.18, & \mathcal B_7\ge3.95 \qquad\quad\ \text{(Nano LR-short-7)}.
\end{cases}
\]
At the measured bias $\mathcal B_7=2.37$, the relevant fine-scale competitor is Luna, giving a break-even price of about $\$0.82$. Thus, in this residual-variance-dominated regime, Sonnet's observed price of $\$3.158$ would need to fall by about a factor of $3.9$ to become cost-competitive. 
Conversely, at Sonnet's actual price of $\$3.158$, the fine-scale calculation becomes favorable only if its hypothetical bias falls below $2.01$, at which point the cheapest admissible competitor becomes the substantially more expensive Avg.~Haiku+Sonnet ensemble. Only if Sonnet's bias falls below $1.86$ would it also become the least-biased displayed pipeline and extend the best reachable bias floor in Table~\ref{tab:llm-motivation}.


\section{Unknown bias: budgeted trusted certification}\label{sec:unknown_bias}
In real-world applications, the bias parameters $\{\cB_\lambda\}_{\lambda=1}^{\ell}$ may themselves be difficult to estimate from the training sources, making Algorithm~\ref{alg:known} difficult to implement directly.\footnote{Compared with the bias parameters, the variance parameters like $\cL$ and $\sigma_*^2$ are relatively easier to estimate. There are also existing adaptive methods for the state-dependent noise setting that do not require prior knowledge of $\cL_\lambda$ or $\sigma_{\lambda,*}^2$; see, e.g., \citet{rodomanov2024universality}. Therefore, our paper focuses on the more challenging unknown bias setting.} Therefore, in this section, we assume occasional access to an expensive \emph{certification module} $\cC$ that returns the trusted gradient $\nabla f(y)$ at a cost $c_{\rm cert}>0$ per query. The module is accurate enough to certify optimization progress, but can be too expensive to use for routine training. This setup follows the broad principle of combining abundant machine-generated information with a small amount of trusted data; see, e.g., \citet{hendrycks2018trusted,angelopoulos2023ppi}.

The price of the certification module $c_{\rm cert}$ introduces an additional operational decision: 
\begin{center}
  \emph{How often should the optimization progress be certified?}  
\end{center}
 Certifying after every stage of contraction detects overly biased sources quickly, but repeatedly pays for the expensive trusted information. Certifying too rarely saves trusted queries but can spend substantial training cost on an oracle that has already become too biased to achieve the desired accuracy. Our meta-algorithm proposed in this section interpolates between these extremes using a specified budget $B$ on uncertified training expenditure.

\begin{remark}
    We analyze exact trusted-gradient certification in order to isolate this certification-frequency tradeoff from the statistical problem of constructing the certificate itself. In applications, the certification module $\cC$ may instead be constructed using a sufficiently large trusted sample, expert-reviewed observations, or repeated measurements. Standard concentration arguments can then construct a high-probability confidence bound for the projected-gradient norm. We leave the joint tradeoff among trusted-sample size, statistical precision, and certification frequency for a stochastic certification oracle to future work.
\end{remark}

Additionally, it should be highlighted that we use first-order rather than zeroth-order certification. In particular, certifying $f(x)-f(x^*)$ would require knowledge or accurate estimation of the typically unknown optimal value $f(x^*)$, whereas a trusted gradient directly yields a computable projected-gradient certificate.
Specifically, we use the projected gradient mapping $\PG_{Y, \gamma}(x)$ defined in \eqref{def_projected_gradient} as our certification criterion. For notational simplicity, we let
\begin{align}\label{def:projected_gradient}
\PG_{X}(x)
:= \PG_{X, 1/(2L)}(x) = 
2L\left[
x-\Pi_{X}\left(x-\tfrac{1}{2L}\nabla f(x)\right)
\right].
\end{align}
Our trusted certification module can evaluate $\|\PG_{X}(x)\|$. The following standard error bounds explain why this quantity provides a valid distance certificate.

\begin{lemma}[Projected-gradient certificate]
\label{lem:pg-certificate}
Let $X$ be closed and convex and suppose $x^*\in X$.
For any $\gamma\le2/L$, we have
\begin{align}\label{decent_ineq}
    f(x) - f(x^+) \geq (\gamma - \tfrac{\gamma^2L}{2})
\|\PG_{X,\gamma}(x)\|^2,
\end{align}
and
\begin{align}\label{projected_gradient_ineq}
    \|x-x^*\|
    \le
    \tfrac{2}{\mu}
    \|\PG_{X,\gamma}(x)\|.
\end{align}
Consequently, $\|\PG_{X,\gamma}(x)\|
    \le
    \tfrac{\sqrt{2}\mu R}{4}$ implies $\|x-x^*\|^2\le\tfrac{R^2}{2}$.
\end{lemma}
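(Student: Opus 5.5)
Both inequalities rest on the variational characterization of the projection. Write $x^+:=\Pi_X(x-\gamma\nabla f(x))$, so that $\PG_{X,\gamma}(x)=(x-x^+)/\gamma$. The projection optimality condition states that for every $z\in X$,
\[
\inner{x^+-x+\gamma\nabla f(x)}{z-x^+}\ge 0,
\quad\text{equivalently}\quad
\inner{\nabla f(x)-\PG_{X,\gamma}(x)}{z-x^+}\ge 0 .
\]

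\emph{Descent inequality \eqref{decent_ineq}.} Take $z=x$. Since $x-x^+=\gamma\PG_{X,\gamma}(x)$, this choice gives $\inner{\nabla f(x)}{x-x^+}\ge\gamma\|\PG_{X,\gamma}(x)\|^2$. Next apply $L$-smoothness:
\[
f(x^+)\le f(x)-\inner{\nabla f(x)}{x-x^+}+\tfrac{L}{2}\|x-x^+\|^2 .
\]
Substituting the previous bound yields $f(x)-f(x^+)\ge(\gamma-\tfrac{\gamma^2L}{2})\|\PG_{X,\gamma}(x)\|^2$, which is nonnegative precisely because $\gamma\le2/L$.

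\emph{Error bound \eqref{projected_gradient_ineq}.} Now take $z=x^*$ and use the optimality condition $\inner{\nabla f(x^*)}{x^+-x^*}\ge0$. Adding the two inequalities gives
\[
\inner{\nabla f(x)-\nabla f(x^*)}{x^+-x^*}\le\inner{\PG_{X,\gamma}(x)}{x^+-x^*}.
\]
Replace $x^+$ by $x$ on the left, which produces an error term $\gamma\inner{\nabla f(x)-\nabla f(x^*)}{\PG_{X,\gamma}(x)}$. Strong monotonicity bounds the resulting left side below by $\mu\|x-x^*\|^2$. On the right, write $x^+-x^*=(x-x^*)-\gamma\PG_{X,\gamma}(x)$. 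Together these give
\[
\mu\|x-x^*\|^2\le\inner{\PG_{X,\gamma}(x)}{x-x^*}-\gamma\|\PG_{X,\gamma}(x)\|^2+\gamma\inner{\nabla f(x)-\nabla f(x^*)}{\PG_{X,\gamma}(x)}.
\]

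\emph{Main obstacle.} The cross term $\gamma\inner{\nabla f(x)-\nabla f(x^*)}{\PG_{X,\gamma}(x)}$ has to be controlled so that the constant comes out as $2/\mu$. My first attempt is Cauchy--Schwarz with $\|\nabla f(x)-\nabla f(x^*)\|\le L\|x-x^*\|$. This gives $\mu\|x-x^*\|^2\le(1+\gamma L)\|\PG_{X,\gamma}(x)\|\,\|x-x^*\|$, hence $\|x-x^*\|\le\frac{1+\gamma L}{\mu}\|\PG_{X,\gamma}(x)\|$. This equals $\tfrac{2}{\mu}\|\PG\|$ exactly when $\gamma\le1/L$, which covers the case $\gamma=1/(2L)$ used in \eqref{def:projected_gradient}.

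For $1/L<\gamma\le2/L$ a sharper argument is needed. One option is to use co-coercivity, $\inner{\nabla f(x)-\nabla f(x^*)}{x-x^*}\ge\frac1L\|\nabla f(x)-\nabla f(x^*)\|^2$, and absorb the cross term via Young's inequality against the $-\gamma\|\PG_{X,\gamma}(x)\|^2$ term. The other option, which I would try if that fails, is to invoke nonexpansiveness of the map $x\mapsto\Pi_X(x-\gamma\nabla f(x))$ for $\gamma\le 2/L$. This map has fixed point $x^*$, so $\|x^+-x^*\|\le\|x-x^*\|$. One then combines strong convexity at $x^+$ with the inequality above to obtain the factor $2/\mu$. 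I expect this constant-tracking step to be the only delicate part of the proof.

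\emph{Consequence.} If $\|\PG_{X,\gamma}(x)\|\le\frac{\sqrt2\mu R}{4}$, then \eqref{projected_gradient_ineq} gives $\|x-x^*\|\le\frac{2}{\mu}\cdot\frac{\sqrt2\mu R}{4}=\frac{R}{\sqrt2}$. Squaring yields $\|x-x^*\|^2\le R^2/2$.
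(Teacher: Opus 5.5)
Your descent inequality is correct and is the same computation as the paper's. The gap is in the error bound \eqref{projected_gradient_ineq}. Bounding the cross term $\gamma\inner{\nabla f(x)-\nabla f(x^*)}{\PG_{X,\gamma}(x)}$ by Cauchy--Schwarz gives the constant $(1+\gamma L)/\mu$, which is strictly larger than $2/\mu$ once $\gamma>1/L$. For $1/L<\gamma\le 2/L$ you only list candidate strategies without carrying either out, so as written the lemma is proved only for $\gamma\le 1/L$, not on the stated range.

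Your first strategy does close the gap. Write $g:=\nabla f(x)-\nabla f(x^*)$, $p:=\PG_{X,\gamma}(x)$, $d:=x-x^*$.
\begin{itemize}
\item Young's inequality $\gamma\inner{g}{p}\le\gamma\|p\|^2+\tfrac{\gamma}{4}\|g\|^2$ cancels the $-\gamma\|p\|^2$ term exactly.
\item Co-coercivity $\|g\|^2\le L\inner{g}{d}$ (a standard consequence of convexity and $L$-smoothness on $\mathbb{R}^d$), together with $\gamma\le 2/L$, gives $\tfrac{\gamma}{4}\|g\|^2\le\tfrac12\inner{g}{d}$.
\item Hence $\tfrac12\inner{g}{d}\le\inner{p}{d}$, and strong monotonicity yields $\tfrac{\mu}{2}\|d\|^2\le\|p\|\,\|d\|$.
\end{itemize}
This gives the constant $2/\mu$ on the whole range, and the nonexpansiveness route is not needed.

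The paper never produces the cross term, because it compares function values instead of using the first-order optimality condition at $x^*$. It combines the projection inequality at an arbitrary $y\in X$ with the strong-convexity lower bound at $x$ and the smoothness upper bound between $x$ and $x^+$. This yields a single master inequality:
\[
f(y)\ge f(x^+)+\inner{p}{y-x}+\bigl(\gamma-\tfrac{\gamma^2L}{2}\bigr)\|p\|^2+\tfrac{\mu}{2}\|y-x\|^2\quad\forall\, y\in X .
\]
Taking $y=x$ gives \eqref{decent_ineq}. Taking $y=x^*$ and using $f(x^*)\le f(x^+)$ gives $\tfrac{\mu}{2}\|x-x^*\|^2\le\inner{p}{x-x^*}$, after discarding the nonnegative term $(\gamma-\tfrac{\gamma^2L}{2})\|p\|^2$.

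So in the paper's route, $\gamma\le 2/L$ enters only as a sign condition, both claims come from one inequality, and neither a gradient-Lipschitz bound nor co-coercivity is needed. Your monotone-operator route is valid once completed, but it needs that extra global ingredient to reach the full range $\gamma\le 2/L$.
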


We defer the proof of Lemma~\ref{lem:pg-certificate} to Appendix~\ref{proof:lem:pg-certificate}. This lemma indicates that the certificate
$
\|\PG_{X}(x)\|\le \sqrt{2}\mu R_{s-1}/4
$
 guarantees
$
\|x-x^*\|^2\le R^2_{s-1}/2.
$
Thus, instead of a distance-contraction subroutine as in Definition~\ref{def:contraction-routine}, we require in this section the following projected-gradient reduction subroutine.

\begin{definition}[Projected-gradient reduction subroutine]\label{def:pg-reduction-routine}
Consider a subroutine
$
\mathsf{PGReduce}(y,R;\SO_\lambda),
$
where $y$ is an initial point, $R$ is the current radius, and $\SO_\lambda$ is the stochastic oracle used for training. Associated with the subroutine are a bias-admissibility condition $\mathsf{Adm}_{\rm PG}(R,\cB_\lambda)$ and a known oracle complexity $\widetilde T(R,\lambda)$.
We say that $\mathsf{PGReduce}$ is valid if, whenever
\[
\E\|y-x^*\|^2\le R^2
\quad\text{and}\quad
\mathsf{Adm}_{\rm PG}(R,\cB_\lambda)\text{ holds},
\]
it uses at most $\widetilde T(R,\lambda)$ oracle calls to $\SO_\lambda$ and returns a candidate $y^+\in X$ satisfying
\[
   \bbe\|\PG_{X}(y^+)\|\le \tfrac{\sqrt{2}\mu R}{16}.
\]
\end{definition}

Note that the bias-admissibility condition $\mathsf{Adm}_{\rm PG}(R,\cB_\lambda)$ is used only in the theoretical analysis and need not be known by the meta-algorithm presented below. Whether an oracle can actually support the requested progress is evaluated by the trusted projected-gradient certificate. This distinction is important: violating a sufficient PG-admissibility condition does not itself force rejection, since an oracle may still happen to pass the certificate test.

\subsection{Budgeted sparse certification meta-algorithm}
\label{subsec:budgeted-meta}
In this section, we propose the sparse certification meta-algorithm, which runs in blocks and uses a budget parameter $B\in[0,\infty]$ to determine the certification frequency. The algorithm uses both the contraction subroutine in Definition~\ref{def:contraction-routine} and the projected-gradient (PG) reduction subroutine in Definition~\ref{def:pg-reduction-routine}. The algorithm will eliminate the stochastic oracle if it does not pass the certification round.

\paragraph{Homogeneous blocks and progressive certification.}

Our key design choice is to use only one stochastic oracle SO$_\lambda$ inside each uncertified block. This makes a failed certification attributable to a single source, so that the source can be safely discarded after each unsuccessful block without any additional screening procedure.

For the current certified level $s \in \mathbb{N}$, let $y^s$ denote the current certified point, i.e.,
$
\|y^s-x^*\|^2\le R_s^2.
$
Let $\cP$ denote the current pool of non-rejected oracles. Define the stage cost quantities
\begin{align}\label{def_stage_cost}
C_\lambda (s) := c_\lambda T(R_s, \lambda), \quad \widetilde C_\lambda (s) := c_\lambda \widetilde  T(R_s, \lambda),
\end{align}
where $T(\cdot, \cdot)$ and $\widetilde T(\cdot, \cdot)$ are defined in Definitions \ref{def:contraction-routine} and \ref{def:pg-reduction-routine}, respectively.
At the beginning of a block, choose
\begin{equation}
\lambda
\in
\arg\min_{\nu\in\cP} C_\nu(s).
\label{eq:block-oracle-choice}
\end{equation}
The same oracle is then retained throughout this block. We define the parameter $B\in[0,\infty]$, interpreted as a \emph{single-trial budget for uncertified training}. The endpoint of the block is determined before any stochastic training is performed. Starting from level $s$, we keep adding contraction stages in Definition~\ref{def:contraction-routine} as long as (i) their cumulative cost does not exceed $B$, and (ii) $\lambda$ remains the cheapest oracle in the current pool at the next accuracy level. The last stage of the block is replaced by a projected-gradient (PG) reduction as in Definition~\ref{def:pg-reduction-routine}, which is designed for certification. A block is also forced to terminate when a final target level $K$ is prespecified.\footnote{If no final target level $K$ is prespecified, we can set $K = \infty$ and the algorithm will keep running until all the stochastic oracles are eliminated from the pool or the target accuracy is reached.} More precisely, let $W$ be the cumulative cost of the current block. Starting from $W=0$ and $k=s$, we declare stage $k$ to be the terminal stage of the block if any of the following conditions holds:
\begin{align}
&W+C_\lambda(k)>B, &\text{ (block budget constraint)}\label{eq:block-stop-budget}\\
&\min_{\nu\in\cP} C_\nu(k+1) < C_\lambda(k+1)
\text{ for } k<K-1, &\text{ (economic switching)} \label{eq:block-stop-switch}\\
&k=K-1 &\text{ (final target level)}\label{eq:block-stop-final}
\end{align}
Otherwise, stage $k$ is scheduled as a distance-contraction stage; we set
$W\gets W+C_\lambda(k)$ and continue to stage $k+1$.
A terminal stage of $t-1$ means the block aims to move from certified level $s$ to level $t$. By our construction,
\begin{equation}
\sum_{k=s}^{t-2} C_\lambda(k)\le B.
\label{eq:block-budget}
\end{equation}
Note that the cost of the last PG stage is not fully included in $B$.

One independent trial of this homogeneous block can be written as
\begin{equation}
\mathsf{Block}(y^s;s,t,\lambda)
:=
\mathsf{PGReduce}_{t-1}
\circ
\mathsf{Contract}_{t-2}
\circ\cdots\circ
\mathsf{Contract}_{s}(y^s),
\label{eq:block-definition}
\end{equation}
where all subroutines use the same stochastic oracle $\SO_\lambda$ but independent random samples. The same block might be repeated several times to boost the probability arguments, where different block trials use independent randomness. 
In the special case when $t=s+1$, the block consists only of the terminal PG step, with no contraction steps. 

For an endpoint $y$ of a block from $s$ to $t$, the trusted gradient can certify not only whether the full target $t$ has been reached, but also the amount of \emph{partial progress}. In particular, we define
\begin{equation}
\mathsf{Lev}_{s,t}(y)
:=
\max\left(
\{s\}
\cup
\left\{
j\in\{s+1,\ldots,t\}:
\|\PG_X(y)\|
\le
\tfrac{\sqrt{2}\mu R_{j-1}}{4}
\right\}
\right).
\label{eq:certified-level}
\end{equation}
By Lemma~\ref{lem:pg-certificate}, we have that $\mathsf{Lev}_{s,t}(y)=j>s$ implies $\|y-x^*\|^2\le R_j^2$.
Therefore, although the full block may be unsuccessful in reaching the desired accuracy of stage $t$, we do not need to discard all progress made by the oracle.

\paragraph{Block admissibility.}
We call oracle $\lambda$ \emph{block-admissible} for a planned block $(s,t)$ if
\begin{equation}
\mathsf{Adm}_{\rm C}(R_k,\cB_\lambda)
\quad\text{holds for }k=s,\ldots,t-2, \text{ and }
\quad
\mathsf{Adm}_{\rm PG}(R_{t-1},\cB_\lambda)
\quad\text{holds}.
\label{eq:block-admissibility}
\end{equation}
By Definitions~\ref{def:contraction-routine} and~\ref{def:pg-reduction-routine}, a block-admissible oracle $\lambda$ has the following validity guarantee.

\begin{lemma}[Validity of an admissible block]
\label{lem:block-validity}
Suppose $y^s$ is certified at level $s$, and let $\lambda$ be block-admissible for a planned block $(s,t)$. Then the output point
$
z=\mathsf{Block}(y^s;s,t,\lambda)
$
satisfies
\begin{equation}
\E\|\PG_X(z)\|
\le
\tfrac{\sqrt{2}\mu R_{t-1}}{16},
\label{eq:block-expected-pg}
\end{equation}
and consequently
\begin{equation}
\Prob\left(
\|\PG_X(z)\|
\le
\tfrac{\sqrt{2}\mu R_{t-1}}{4}
\right)
\ge \tfrac34.
\label{eq:block-success}
\end{equation}
\end{lemma}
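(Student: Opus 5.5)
The plan is an induction along the stages of the block, followed by a single application of Markov's inequality. Write $z_s:=y^s$ and, for $k=s,\ldots,t-2$, let $z_{k+1}:=\mathsf{Contract}_k(z_k)$ denote the output of the $k$-th contraction stage, each run with $\SO_\lambda$ at radius $R_k$. The final output is $z=\mathsf{PGReduce}_{t-1}(z_{t-1})$. I will show by induction on $k$ that
\[
\E\|z_k-x^*\|^2\le R_k^2, \qquad k=s,\ldots,t-1.
\]

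For the base case, $y^s$ is certified at level $s$. By the certified-level discussion following Lemma~\ref{lem:pg-certificate}, this means $\|y^s-x^*\|^2\le R_s^2$ deterministically, so the bound holds in expectation. For the inductive step, suppose $\E\|z_k-x^*\|^2\le R_k^2$ for some $k\le t-2$. Block admissibility \eqref{eq:block-admissibility} gives $\mathsf{Adm}_{\rm C}(R_k,\cB_\lambda)$. Definition~\ref{def:contraction-routine} allows a random input point, with the expectation taken jointly over the input and the fresh samples, and the stages use independent samples. Hence
\[
\E\|z_{k+1}-x^*\|^2\le R_k^2/2=R_{k+1}^2.
\]
When $t=s+1$ there are no contraction stages, and the base case alone suffices.

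At the terminal stage we have $\E\|z_{t-1}-x^*\|^2\le R_{t-1}^2$, and admissibility also gives $\mathsf{Adm}_{\rm PG}(R_{t-1},\cB_\lambda)$. Definition~\ref{def:pg-reduction-routine}, again with a random input, then yields $\E\|\PG_X(z)\|\le \sqrt2\mu R_{t-1}/16$, which is \eqref{eq:block-expected-pg}.

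For \eqref{eq:block-success}, apply Markov's inequality to the nonnegative variable $\|\PG_X(z)\|$:
\[
\Prob\bigl(\|\PG_X(z)\|>\tfrac{\sqrt2\mu R_{t-1}}{4}\bigr)\le \frac{\sqrt2\mu R_{t-1}/16}{\sqrt2\mu R_{t-1}/4}=\tfrac14,
\]
so the complementary event has probability at least $3/4$.

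There is no serious obstacle. The only point needing care is that the subroutine guarantees hold in expectation over a random starting point, as the paper explicitly stipulates after Definition~\ref{def:contraction-routine}. Chaining these guarantees with independent randomness at each stage is what makes the induction valid, and it is why only an expected bound, not a deterministic one, is carried forward until the final Markov step.
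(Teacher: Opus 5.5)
Your proposal is correct and follows essentially the same route as the paper's proof. Like the paper, you induct on the stages using Definition~\ref{def:contraction-routine} with random warm starts to get $\E\|u_k-x^*\|^2\le R_k^2$ up to $k=t-1$, then invoke Definition~\ref{def:pg-reduction-routine} at the terminal radius and finish with Markov's inequality.
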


\begin{proof}
Let $u_s:=y^s$, thus $\|u_s-x^*\|^2\le R_s^2.$ For $k=s,\ldots,t-2$ define
\[
u_{k+1}
:=
\mathsf{Contract}(u_k,R_k;\SO_\lambda).
\]
By Definition~\ref{def:contraction-routine} and the contraction-admissibility conditions in \eqref{eq:block-admissibility}, a simple induction implies
\[
\E\|u_k-x^*\|^2\le R_k^2,
\quad \text{for }
k=s,\ldots,t-1.
\]
Therefore, the input $u_{t-1}$ to the terminal projected-gradient reduction satisfies the mean-square warm-start condition at radius $R_{t-1}$. Definition~\ref{def:pg-reduction-routine} and the terminal PG-admissibility condition then imply \eqref{eq:block-expected-pg}. By applying Markov's inequality, we have
\[
\Prob\left(
\|\PG_X(z)\|>
\tfrac{\sqrt{2}\mu R_{t-1}}{4}
\right)
\le
\tfrac{
\E\|\PG_X(z)\|
}{
\sqrt{2}\mu R_{t-1}/4
}
\le \tfrac14,
\]
which proves \eqref{eq:block-success}. 
\end{proof}


Now we are at a position to introduce the complete budgeted sparse certification meta-algorithm (Algorithm~\ref{alg:budgeted-certification}). 

\begin{algorithm}[h]
\caption{Budgeted sparse certification with homogeneous blocks}
\label{alg:budgeted-certification}
\begin{algorithmic}[1]\small
\STATE \textbf{Input:} initial point $y^0$ such that $\|y^0 - x^*\|\leq R$, oracle pool $\cP$, budget $B$, subroutines $\mathsf{Contract}$ and $\mathsf{PGReduce}$, repetition cap $\mathcal R$, stage cap $K$; set $s\gets0$.
\WHILE{$s<K$ and $\cP\neq\varnothing$}
\STATE Choose $\lambda\in\arg\min_{\nu\in\cP}C_\nu(s)$.
\STATE Determine the block endpoint $t>s$ using conditions \eqref{eq:block-stop-budget}--\eqref{eq:block-stop-final}.
\STATE Set $j_{\rm best}\gets s$ and $y_{\rm best}\gets y^s$.
\FOR{$r=1,\ldots,\mathcal R$}
\STATE Generate an independent block output
$
z_r\gets\mathsf{Block}(y^s;s,t,\lambda).
$
\STATE Query the trusted certification module $\cC$ at $z_r$ and compute
$
j_r\gets\mathsf{Lev}_{s,t}(z_r).
$
\IF{$j_r>j_{\rm best}$}
\STATE Set $j_{\rm best}\gets j_r$ and $y_{\rm best}\gets z_r$.
\ENDIF
\IF{$j_r=t$}
\STATE \textbf{Accept full block:} set $y^t\gets z_r$, $s\gets t$, and \textbf{break} the for-loop.
\ENDIF
\ENDFOR
\IF{no trial certified level $t$}
\STATE \textbf{Keep certified partial progress:} if $j_{\rm best}>s$, set
$y^{j_{\rm best}}\gets y_{\rm best}$ and $s\gets j_{\rm best}$.
\STATE Permanently remove $\lambda$ from $\cP$.
\ENDIF
\ENDWHILE
\end{algorithmic}
\end{algorithm}

It is noteworthy that, in Algorithm~\ref{alg:budgeted-certification}, we adopt a sequential way to amplify the constant probability in Ineq.~\eqref{eq:block-success} to high probability guarantees. In particular, a successful oracle does not need to use all $\mathcal R$ repetitions: as soon as one block certifies the full target level $t$, the block is accepted, and no further training or trusted-gradient queries are required. Only an oracle that fails to produce a fully certified block after all $\mathcal R$ trials will be removed. The following lemma shows sequential implementation can be substantially cheaper than generating all $\mathcal R$ candidates before certification.
\begin{lemma}[Sequential certification]
\label{prop:sequential-block-cert}
Suppose a planned block is block-admissible, and let $N$ be the number of trials used by the sequential rule in Algorithm~\ref{alg:budgeted-certification}, truncated after $\mathcal R$ trials. Then
\begin{equation}\label{eq:sequential-expected-trials}
\Prob(\text{the block certification fails})
\le4^{-\mathcal R}, \quad \text{and}\quad \E[N]\le\tfrac43.
\end{equation}
\end{lemma}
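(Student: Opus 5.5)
The plan is to treat the trials as independent Bernoulli experiments and read off both claims. Fix the certified point $y^s$ at the start of the block. Conditioned on $y^s$, each trial $z_r=\mathsf{Block}(y^s;s,t,\lambda)$ uses fresh, independent samples from $\SO_\lambda$. Hence the events
\[
S_r:=\Bigl\{\|\PG_X(z_r)\|\le \tfrac{\sqrt{2}\mu R_{t-1}}{4}\Bigr\}
\]
are conditionally independent across $r$. Since $y^s$ is certified at level $s$, we have $\|y^s-x^*\|^2\le R_s^2$ deterministically. Block-admissibility then lets us invoke Lemma~\ref{lem:block-validity} conditionally on $y^s$, which gives $\Prob(S_r\mid y^s)\ge 3/4$ for every $r$.

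Next, observe that $S_r$ is exactly the event $\mathsf{Lev}_{s,t}(z_r)=t$. By \eqref{eq:certified-level}, the level $t$ is attained precisely when the certificate threshold with index $j=t$ holds, i.e.\ $\|\PG_X(z_r)\|\le\sqrt2\mu R_{t-1}/4$. So the sequential rule in Algorithm~\ref{alg:budgeted-certification} stops at the first $r$ with $S_r$, and the block certification fails only if $S_1^c\cap\cdots\cap S_{\mathcal R}^c$ occurs. By conditional independence, this has conditional probability at most $(1/4)^{\mathcal R}$. Taking expectation over $y^s$ gives the first bound in \eqref{eq:sequential-expected-trials}.

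For the number of trials, write $N=\sum_{r=1}^{\mathcal R}\mathbf 1\{N\ge r\}$, using truncation at $\mathcal R$. The event $\{N\ge r\}$ equals $S_1^c\cap\cdots\cap S_{r-1}^c$, so its conditional probability is at most $4^{-(r-1)}$. Summing the geometric series gives
\[
\E[N\mid y^s]\le\sum_{r\ge1}4^{-(r-1)}=\tfrac43,
\]
and averaging over $y^s$ yields $\E[N]\le 4/3$. Equivalently, $N$ is stochastically dominated by a geometric random variable with success probability $3/4$.

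The only delicate point is justifying the conditional application of Lemma~\ref{lem:block-validity}. That lemma is stated under the assumption $\E\|y-x^*\|^2\le R^2$ with possibly random $y$, whereas here we need per-trial success probabilities conditional on the common starting point $y^s$, so that independence across trials is legitimate. I would handle this by noting that certification at level $s$ gives the deterministic bound $\|y^s-x^*\|^2\le R_s^2$. Thus, conditioning on any realization of $y^s$, the warm-start hypothesis of Definition~\ref{def:contraction-routine} holds pointwise. The lemma then applies to each trial with $y^s$ treated as fixed, and independence comes from the fresh randomness used in each trial.
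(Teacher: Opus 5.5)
Your proposal is correct and follows the paper's proof: a per-trial success probability of at least $3/4$ from Lemma~\ref{lem:block-validity}, independence across trials giving the $4^{-\mathcal R}$ failure bound, and the tail-sum formula for $\E[N]$ bounded by a geometric series. You also condition on the deterministically certified start $y^s$, which is what makes independence legitimate, and you identify the success event with $\mathsf{Lev}_{s,t}(z_r)=t$; both fill in details the paper leaves implicit.
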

\begin{proof}
By Lemma~\ref{lem:block-validity}, every independent block trial succeeds with probability at least $3/4$. Hence the probability that the first $r$ trials all fail is at most $4^{-r}$. Therefore
\[
\Prob(\text{the block certification fails})
\le 4^{-\mathcal R}.
\]
The expectation for the total number of trials for a successful certification is bounded by
\[
\E[N]
=
\sum_{r=0}^{\mathcal R-1}\Prob(N>r)
\le
\sum_{r=0}^{\mathcal R-1}4^{-r}
\le
\tfrac43,
\]
which completes the proof.
\end{proof}

The homogeneous-block construction relieves the attribution issue that would arise if different oracles were used inside one uncertified block. Here, all $\mathcal R$ trials of a failed block use the same $\lambda$, so that the false elimination issue can be avoided with high probability. 


The next proposition characterizes the high-probability convergence guarantees of Algorithm~\ref{alg:budgeted-certification}.

\begin{proposition}[High-probability convergence guarantee of budgeted certification]
\label{prop:budgeted-correctness}
Let
$
K=\lceil\log_2(R_0^2/\epsilon)\rceil
$
and choose
$
\mathcal R
\ge
\left\lceil
\log_4\frac{K+\ell}{\delta}
\right\rceil.
$
Suppose that whenever Algorithm~\ref{alg:budgeted-certification} has not yet reached certified level $K$, the current pool contains at least one oracle that is block-admissible. 
Then, with probability at least $1-\delta$, the algorithm terminates with a certified point $y$ satisfying
$
\|y-x^*\|^2\le\epsilon.
$
The total number of blocks is bounded by
$K+\ell = O\!\left(
\ell+\log\tfrac{R_0^2}{\epsilon}
\right),
$
and the number of trusted-gradient evaluations is at most
\[
O\!\left\{
\left(
\ell+\log\tfrac{R_0^2}{\epsilon}
\right)
\log\!\left(
\tfrac{\ell+\log(R_0^2/\epsilon)}{\delta}
\right)
\right\}.
\]
\end{proposition}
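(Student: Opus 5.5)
The argument has three parts. We count blocks deterministically, union-bound the bad events over blocks, and then read off correctness from the certificate.

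We start by classifying each executed block of Algorithm~\ref{alg:budgeted-certification}. Every block either (a) is accepted, so the certified level strictly increases from $s$ to $t>s$, or (b) fails, so the chosen oracle $\lambda$ is permanently removed from $\cP$. Type (b) occurs at most $\ell$ times, since the pool has $\ell$ oracles and removals are permanent. Type (a) strictly increases the integer level $s\in\{0,\dots,K\}$, and the endpoint rule \eqref{eq:block-stop-final} caps $t\le K$, so type (a) occurs at most $K$ times. Hence there are at most $K+\ell$ blocks in total, with no probabilistic argument needed. The partial-progress update never decreases $s$, which is the only point to check here. Since each block makes at most $\mathcal R$ trusted queries, the trusted-gradient count is at most $(K+\ell)\mathcal R$. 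With $\mathcal R=\lceil\log_4((K+\ell)/\delta)\rceil$ and $K=O(\log(R_0^2/\epsilon))$, this gives the stated bound.

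For the probability, we define the bad event $\mathcal E_b$ for the $b$-th block as: the oracle chosen at the start of block $b$ is block-admissible for the planned $(s,t)$, yet no trial certifies level $t$. Conditionally on the history up to the start of block $b$ (which fixes $y^s$, $s$, $t$, $\lambda$), the trials use fresh independent randomness. Lemma~\ref{prop:sequential-block-cert}, applied conditionally, therefore gives $\Prob(\mathcal E_b\mid\mathcal F_{b})\le 4^{-\mathcal R}\le \delta/(K+\ell)$.

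There are at most $K+\ell$ blocks, but their number is random. We handle this by padding with dummy blocks indexed $b=1,\dots,K+\ell$ and defining $\mathcal E_b=\varnothing$ for unexecuted blocks. A union bound then gives $\Prob(\cup_b\mathcal E_b)\le\delta$.

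A subtle point is that Lemma~\ref{lem:block-validity} requires $y^s$ to be deterministically certified at level $s$, i.e.\ $\|y^s-x^*\|^2\le R_s^2$. This holds surely for every accepted point: by Lemma~\ref{lem:pg-certificate} and the definition \eqref{eq:certified-level}, any accepted or partially retained point satisfies the distance bound with certainty, not merely in expectation. For $s=0$ it holds by the input assumption. So the warm-start hypothesis in Definition~\ref{def:contraction-routine} holds at every block, conditionally on the history.

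On the complement of $\cup_b\mathcal E_b$, we show that the algorithm cannot end with $\cP=\varnothing$ while $s<K$. The hypothesis says the pool always contains a block-admissible oracle, and we need to connect this to the oracle actually selected. The hardest step is that the selected $\lambda$, chosen by cost, need not be the admissible one. The plan is to argue that the admissible oracle is never removed on the good event. An admissible oracle can only be removed by failing its own block, which is excluded on the good event, provided it is admissible for the specific planned block it is given. I would interpret the hypothesis in the natural way, namely that whenever the pool is nonempty at level $s<K$, some remaining oracle is block-admissible for any block it would be assigned. Then no admissible oracle is ever removed. Since the pool therefore never empties before level $K$, and there are finitely many blocks, the loop exits only when $s=K$.

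At termination $y=y^K$ is certified, and so $\|y-x^*\|^2\le R_K^2=2^{-K}R_0^2\le\epsilon$. This holds with probability at least $1-\delta$, and the block and query counts established above apply.
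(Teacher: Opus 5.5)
Your proposal is correct and follows the same route as the paper's proof. Both use the per-block failure bound $4^{-\mathcal R}\le\delta/(K+\ell)$ from Lemma~\ref{lem:block-validity}, the count of at most $K$ accepted and $\ell$ failed blocks, a union bound, the exact projected-gradient certificate for correctness, and $\mathcal R(K+\ell)$ trusted queries. Your padding argument and your explicit reading of the availability hypothesis make precise steps the paper passes over. The ``pool never empties'' step is cleanest at the moment the pool would shrink to a single oracle: that oracle is then the selected, admissible one, so it cannot fail on the good event.
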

\begin{proof}
Consider a block-admissible oracle. By Lemma~\ref{lem:block-validity}, one independent block trial fails to certify the full target with probability at most $1/4$. Therefore all $\mathcal R$ independent trials fail with probability at most
$
4^{-\mathcal R}\le\tfrac{\delta}{K+\ell}.
$
Every fully successful block advances the certified level by at least one, so there can be at most $K$ such blocks. Every unsuccessful block permanently removes one oracle, so there can be at most $\ell$ unsuccessful blocks. A union bound over at most $K+\ell$ block decisions shows that, with probability at least $1-\delta$, no block-admissible oracle is incorrectly removed.

Every advancement of the certified level, whether a full-block success or the partial progress, is justified by the exact projected-gradient certificate in Lemma~\ref{lem:pg-certificate}. Thus the certified state always satisfies
$
\|y^s-x^*\|^2\le R_s^2.
$
Under the stated availability condition, an unfinished level cannot exhaust all block-admissible oracles on the above high-probability event. Thus the certified level eventually reaches $K$, where
$
R_K^2\le\epsilon.
$

Finally, each certification round queries the trusted certification module at most $\mathcal{R}$ times. Multiplying $\mathcal{R}$ by $K + \ell$ yields the stated trusted-gradient complexity.
\end{proof}

\paragraph{Savings in trusted-gradient evaluations.} It is useful to compare the above complexity with directly using the trusted first-order module for optimization. If we were to use this trusted module alone, then an accelerated projected-gradient method for the \(\mu\)-strongly convex and \(L\)-smooth problem would require at least
$$ O\!\left(\sqrt{\tfrac{L}{\mu}}\log\tfrac{R_0^2}{\epsilon}\right) $$
trusted-gradient evaluations to reach \(\epsilon\)-accuracy \citep{nemirovskij1983problem}. In contrast, Proposition~\ref{prop:budgeted-correctness} shows that our framework requires only
$$ O\!\left[ \left( \ell+\log\tfrac{R_0^2}{\epsilon} \right) \log\!\left( \tfrac{\ell+\log(R_0^2/\epsilon)}{\delta} \right) \right] $$
trusted-gradient evaluations. Thus, when the number of candidate oracles \(\ell\) is moderate, the trusted gradient module is called only \(\widetilde O(\log(1/\epsilon))\) times. Up to logarithmic confidence factors, our certification strategy saves a factor of \(\sqrt{L/\mu}\). This saving can be significant for important ill-conditioned stochastic optimization problems, where $L/\mu$ can be large. Such ill-conditioning arises frequently in statistical learning problems, including least-squares, ridge, and logistic regression \citep{bottou2018optimization, jain2018accelerating}, as well as large-scale inverse problems such as tomography \citep{chen2018online, ehrhardt2025guide}.


\subsection{Recursive regularization with SGE: an efficient PG subroutine}
\label{subsec:rr-sge}
To instantiate the projected-gradient reduction subroutine in Definition~\ref{def:pg-reduction-routine}, we build on the recursive-regularization (RR) framework of \citet{allen2018gradient,foster2019complexity}. RR solves a sequence of increasingly regularized subproblems and converts sufficiently accurate solutions of these subproblems into a small projected-gradient mapping for the original objective. Our setting requires two modifications to the standard RR analysis: the inner solver is the biased SGE method with convergence guarantees in Theorem~\ref{thm:sge}, and the stochastic oracle satisfies a state-dependent rather than uniformly bounded variance condition. 

We first state the RR algorithm with the SGE subroutine and its main convergence guarantees, and then provide the detailed proof. Similar to Subsection~\ref{subsec:sge}, in this subsection, we consider a fixed stochastic oracle $\lambda$ and use the simplified notation $\cG$ to denote the selected stochastic first-order oracle and $g(x) = \bbe[\cG(x, \xi)]$, $\cL$, $\sigma_*^2$, and $\cB$ to denote its variance and bias parameters. 
We assume in this subsection $x^*$ is an interior point of the feasible region $X$, thus $\nabla f(x^*)=0$. 

\begin{algorithm}[h]
\caption{RR+SGE for projected gradient reduction}
\label{alg:rr-sge}
\begin{algorithmic}[1]\small
\STATE \textbf{Input:} closed convex set $X$, initial point $\widehat x_0$,
oracle $\SO_\lambda$, and
$S=1+\lceil\log_2(L/\mu)\rceil$; set $\phi_0=f$.
\FOR{$j=1,\ldots,S$}
\STATE Run SGE (Algorithm~\ref{alg:sge}) on $\phi_{j-1}$ over $X$, initialized
at $\widehat x_{j-1}$; denote
the output by $\widehat x_j$, which satisfies $ \E[
       \phi_{j-1}(\widehat x_j)-\phi_{j-1}^*
    ]
    \le\delta_j$.
\STATE Set
$
\rho_j=2^{j-1}\mu$, and $
\phi_j(x)
=
\phi_{j-1}(x)
+\frac{\rho_j}{2}\|x-\widehat x_j\|^2.
$
\ENDFOR
\STATE \textbf{Return:} $\widehat x_S$.
\end{algorithmic}
\end{algorithm}


The following theorem characterizes the convergence guarantees of Algorithm~\ref{alg:rr-sge}.
\begin{theorem}[RR+SGE projected-gradient reduction]
\label{thm:rr-sge-subroutine}
Set $
    S=1+\left\lceil\log_2\frac{L}{\mu}\right\rceil.
$ Suppose the initial point $\widehat x_0$ satisfies
$
    \E\|\widehat x_0-x^*\|^2\le R^2.
$
There exists a universal constant $c_0>0$ such that, if
\begin{equation}
    \cB
    \le
    \tfrac{c_0\mu R}{S},
    \label{eq:cert-bias-condition}
\end{equation}
then Algorithm~\ref{alg:rr-sge} returns a point $\widehat y\in X$ satisfying
\begin{equation}
    \E\|\PG_{X}(\widehat y)\|
    \le
    \tfrac{\sqrt{2}\mu R}{16}, ~~\text{and hence}~~\Prob\!\left(
       \|\PG_{X}(\widehat y)\|
       \le \tfrac{\sqrt{2}\mu R}{4}
    \right)
    \ge\tfrac34.
    \label{eq:cert-constant-prob}
\end{equation}
The total number of stochastic-gradient evaluations is at most
\begin{equation}
\mathcal{O}\left(
S\left(
    \sqrt{\tfrac{L}{\mu}}
    +
    \tfrac{\cL}{\mu}
\right)
+
\tfrac{S^3\sigma_*^2}{\mu^2R^2}
\right).
\label{eq:cert-candidate-complexity}
\end{equation}
Consequently, Algorithm~\ref{alg:rr-sge} is a valid projected-gradient reduction subroutine
$
\mathsf{PGReduce}^{\rm RR+SGE}(y,R;\SO_\lambda)
$
with $\cB_\lambda\le
\tfrac{c_0\mu R}{1+\lceil\log_2(L/\mu)\rceil}$ as the bias-admissibility condition $\mathsf{Adm}_{\rm PG}^{\rm RR+SGE}(R,\cB_\lambda)$
and
\[
\widetilde T^{\rm RR+SGE}(R,\lambda)
=
\widetilde O\!\left(T^{\rm SGE}(R,\lambda)\right).
\]
\end{theorem}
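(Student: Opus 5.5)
The plan follows the recursive-regularization analysis of \citet{allen2018gradient,foster2019complexity}, with Theorem~\ref{thm:sge} serving as the inner solver. At stage $j$, the objective $\phi_{j-1}$ is $\mu_{j-1}$-strongly convex with $\mu_{j-1}=\mu+\sum_{i<j}\rho_i=2^{j-1}\mu$, and $L_{j-1}$-smooth with $L_{j-1}\le L+2^{j-1}\mu\le 3L$ for $j\le S$. The oracle for $\nabla\phi_{j-1}$ is $\cG(x,\xi)+\sum_{i<j}\rho_i(x-\widehat x_i)$. This oracle has the same bias $\cB$ and the same noise as $\cG$. One subtlety is that the noise is controlled by $f(x)-f^*$ rather than by $\phi_{j-1}(x)-\phi_{j-1}^*$. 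I will handle it by bounding $f(x)-f^*\le \tfrac{L}{2}\|x-x^*\|^2$ and then $\|x-x^*\|^2\le 2\|x-x_{j-1}^*\|^2+2\|x_{j-1}^*-x^*\|^2$, where $x_{j-1}^*:=\arg\min_X\phi_{j-1}$. The first term is absorbed into a state-dependent modulus of order $\cL L/\mu_{j-1}$ for $\phi_{j-1}$. The second term becomes part of an effective residual variance $\sigma_*^2+\cL L\,\E\|x_{j-1}^*-x^*\|^2$. The stability bound from the RR argument below controls $\|x_{j-1}^*-x^*\|$ by $O(R)$. The extra residual term this produces is then handled by taking minibatches, i.e., enlarging $m$, which is allowed in Theorem~\ref{thm:sge}.

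\textbf{Key steps.}

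First, I apply the function-value bound \eqref{eq:sge-rate} to each $\phi_{j-1}$. I choose $m$ and the iteration count $k_j$ so that $\E[\phi_{j-1}(\widehat x_j)-\phi^*_{j-1}]\le\delta_j$ with
\[
\delta_j \asymp \frac{\mu_{j-1}\,\mu^2R^2}{S^2\,\mu_{j-1}^2}\cdot 2^{-j}
\quad\text{(schematically, } \delta_j\asymp \mu R^2\,4^{-j}/S^2\text{)}.
\]
The bias contribution $216\cB^2/\mu_{j-1}$ must be below $\delta_j$. This is precisely where condition \eqref{eq:cert-bias-condition}, $\cB\le c_0\mu R/S$, enters. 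The initial distance $\|\widehat x_{j-1}-x_{j-1}^*\|^2\lesssim \delta_{j-1}/\mu_{j-2}$ enters only through the $L_{j-1}$ and $\cL$ terms. These are killed after $O(\sqrt{L/\mu}+\cL/\mu)$ iterations per stage. The residual variance term $192\sigma_*^2/(\mu_{j-1}m(k+2))\le\delta_j$ forces $mk_j\gtrsim \sigma_*^2 S^2/(\mu^2R^2)$ up to geometric factors that sum across stages. Summing over the $S$ stages gives \eqref{eq:cert-candidate-complexity}.

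Second, I use the standard RR lemma: $\|x_{j}^*-\widehat x_{j}\|^2\le 2\delta_j/\mu_{j-1}$, and
\[
\|\nabla f(x^*_S)\|\ \text{(or its projected analogue)}\le \sum_{j=1}^S\rho_j\|\widehat x_j-x^*_S\| \le \sum_j \rho_j\sum_{i\ge j}\|\widehat x_i-x_i^*\|+\dots
\]
Combined with $\|\widehat x_S-x_S^*\|$ small, and with $\mu_S\ge L$ so that the last stage is well conditioned, this gives $\E\|\PG_X(\widehat x_S)\|\lesssim\sum_j\rho_j\sqrt{\delta_j/\mu_{j-1}}+\sqrt{L\,\delta_{S+1}/\mu}$. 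For the projected mapping I use the optimality condition of $\phi_S$ over $X$. The mapping $\PG_X$ is $O(L)$-Lipschitz and nonexpansive-type, which converts $\|\widehat x_S-x_S^*\|$ into a projected-gradient bound. The chosen $\delta_j$ make this at most $\sqrt2\mu R/16$. The probability statement in \eqref{eq:cert-constant-prob} then follows from Markov's inequality, exactly as in Lemma~\ref{lem:block-validity}.

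\textbf{Main obstacle.} I expect the hardest step to be the interaction between the state-dependent variance and the shifted minimizers $x_{j-1}^*\neq x^*$. The noise assumption is anchored at $f^*$ and not at $\phi_{j-1}^*$, so the residual variance seen by SGE on stage $j$ picks up a term $\cL L\|x^*_{j-1}-x^*\|^2$. I will bound $\|x_{j-1}^*-x^*\|\le \|\widehat x_0-x^*\|+O(R)$ by strong monotonicity of the regularized problems. If this yields an extra factor in the complexity, I will control it with minibatch size $m\asymp\cL/\mu$, since the $\cL^2/(\mu m^2)$ terms in \eqref{eq:sge-rate} are designed for this. Whether $T$ stays $\widetilde O(T^{\rm SGE})$ then hinges on this bookkeeping. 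The $S^3$ factor absorbs the losses from the $S$ stages and the $1/S^2$ accuracy requirement.
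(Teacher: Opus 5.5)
Your architecture matches the paper's: SGE on each $\phi_{j-1}$, a deterministic RR reduction, and Markov at the end. However, two quantitative steps fail. Each costs a polynomial factor of $L/\mu$, which neither \eqref{eq:cert-candidate-complexity} nor $\widetilde T^{\rm RR+SGE}=\widetilde O(T^{\rm SGE})$ permits.

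\textbf{First gap: the noise transfer you flagged as the main obstacle.} Routing $f(x)-f^*$ through $\tfrac L2\|x-x^*\|^2$ has two effects on the SGE run on $\phi_{j-1}$. It gives a state-dependent modulus of order $\cL L/\rho_j$. It also adds an extra residual of order $\cL L\|x^*_{j-1}-x^*\|^2\approx\cL LR^2$. Minibatching does not help, because in \eqref{eq:sge-rate} these terms constrain the total sample count $mk$, not the iteration count $k$. For example, the $\cL'^2/(\mu' m^2k^2)$ term forces $mk\gtrsim\cL'/\mu'$. You would therefore obtain terms like $\cL L/\mu^2$ and $S^2\cL L/\mu^2$ instead of $S\cL/\mu$.

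The missing idea is the telescoping comparison of Lemma~\ref{lem:rr-noise}. Since $\phi_{i-1}\le\phi_i$ and $\phi_i^*\le\phi_i(x^*_{i-1})=\phi^*_{i-1}+\tfrac{\rho_i}{2}\|x^*_{i-1}-\widehat x_i\|^2$, one gets
\[
f(x)-f^*\le\phi_j(x)-\phi_j^*+\textstyle\sum_{i\le j}\tfrac{\rho_i}{2}\|x^*_{i-1}-\widehat x_i\|^2 .
\]
With this, the modulus stays exactly $\cL$. The extra residual has expectation at most $\cL\sum_i\delta_i=O(\cL\mu R^2/S^2)$, which costs only $O(\cL/\mu)$ samples per stage.

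\textbf{Second gap: the accuracy schedule.} Your choice $\delta_j\asymp\mu R^2 4^{-j}/S^2$ makes $\rho_j\delta_j$ decay like $2^{-j}$, which breaks two things.
\begin{itemize}
\item The bias floor $216\cB^2/\rho_j\le\delta_j$ then needs $\cB\lesssim\mu R\,2^{-j/2}/S$. At $j=S$ this reads $\cB\lesssim\mu R\sqrt{\mu/L}/S$, which \eqref{eq:cert-bias-condition} does not give.
\item The residual-variance cost $\sigma_*^2/(\rho_j\delta_j)\asymp 2^jS^2\sigma_*^2/(\mu^2R^2)$ grows geometrically in $j$. Your ``geometric factors'' therefore sum to $\Theta(L/\mu)$ rather than $O(S)$.
\end{itemize}

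The schedule is essentially forced: take $\delta_j=a\mu^2R^2/(S^2\rho_j)$, so that $\rho_j\delta_j$ is constant. Then:
\begin{itemize}
\item the bias requirement is uniformly $\cB\lesssim\mu R/S$;
\item each stage pays $S^2\sigma_*^2/(\mu^2R^2)$, which gives the $S^3$ after summing;
\item the RR bound is $O\bigl(\sum_j\sqrt{\rho_j\delta_j}\bigr)=O(\sqrt a\,\mu R)$, a small constant fraction of $\mu R$ once $a$ is small.
\end{itemize}

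With these two repairs, your Lipschitz-based conversion of $\|\widehat x_S-x^*_{S-1}\|$ into a projected-gradient bound (using $\rho_S\ge L$) is an acceptable substitute for the paper's descent-inequality argument.
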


\paragraph{Compatibility with the SGE contraction subroutine.}
Without loss of generality, take the universal constant $c_0$ in Theorem~\ref{thm:rr-sge-subroutine} small enough that
$
c_0\le \sqrt{3}/72.
$
Then
\begin{align}\label{relationship_admissiblity}
\mathsf{Adm}_{\rm PG}^{\rm RR+SGE}(R,\cB)
\quad\Longrightarrow\quad
\mathsf{Adm}_{\rm C}^{\rm SGE}(R,\cB).
\end{align}
Moreover, because the radius sequence is nonincreasing, terminal PG-admissibility at radius $R_{t-1}$ implies contraction admissibility at every earlier radius $R_k$, $k\le t-2$. Hence, for the SGE/RR+SGE implementation, a homogeneous block is block-admissible whenever its oracle is PG-admissible at the terminal radius.

\paragraph{Complexity and near-optimality.}
Theorem~\ref{thm:rr-sge-subroutine} shows that converting the distance-contraction guarantee of biased SGE into a projected-gradient guarantee requires only extra logarithmic factors. In particular, by \eqref{eq:stage-complexity}, a single SGE distance-contraction stage has complexity
\[
T^{\rm SGE}(R,\lambda)
=
O\!\left(
\max\left\{
\sqrt{\tfrac{L}{\mu}},
\tfrac{\cL_\lambda}{\mu},
\tfrac{\sigma_{\lambda,*}^2}{\mu^2R^2}
\right\}
\right),
\]
whereas \eqref{eq:cert-candidate-complexity} differs only by powers of
$S=O(\log(L/\mu))$. Thus RR+SGE achieves
$
\widetilde T^{\rm RR+SGE}(R,\lambda)
=
\widetilde O(T^{\rm SGE}(R,\lambda))
$
while producing the stronger, directly certifiable projected-gradient guarantee. The bias requirement also loses only a logarithmic factor: distance contraction with SGE allows $\cB_\lambda=O(\mu R)$, while RR+SGE requires $\cB_\lambda=O(\mu R/S)$.

This complexity is order-optimal up to logarithmic factors. As established in Lemma~\ref{lem:pg-certificate}, any point satisfying
$
\|\PG_{X}(y)\|\le \sqrt{2}\mu R/4
$
also satisfies
$
\|y-x^*\|^2\le R^2/2.
$
Therefore projected-gradient reduction is at least as hard as a constant-factor distance contraction. Known lower bounds for strongly convex stochastic optimization under state-dependent noise imply that the polynomial dependence appearing in $T^{\rm SGE}(R,\lambda)$ cannot be improved \citep{nemirovskij1983problem,woodworth2021even}; see also the near-tight stochastic gradient-minimization bounds of \citet{foster2019complexity}. 

\paragraph{Proof ingredients.} We next work toward proving Theorem~\ref{thm:rr-sge-subroutine}. 
Throughout the proof, define
\[
    x_j^*\in\arg\min_{x\in Y}\phi_j(x),
    \quad
    \phi_j^*=\phi_j(x_j^*),
\]
and let
\[
    \cG_j(x,\xi)
    =
    \cG(x,\xi)
    +
    \sum_{i=1}^j\rho_i(x-\widehat x_i),
    \quad
    g_j(x)
    =
    g(x)
    +
    \sum_{i=1}^j\rho_i(x-\widehat x_i).
\]
Note that the original oracle satisfies
$
    \|g(x)-\nabla f(x)\|_*
    \le \cB,
$
and
\begin{align}
    \E\|\cG(x,\xi)-g(x)\|_*^2
    \le
    \cL(f(x)-f^*)+\sigma_*^2.
    \label{assump:state-dependent}
\end{align}
The first lemma shows that recursive regularization preserves the oracle bias and the state-dependent form of its variance.

\begin{lemma}[State-dependent noise under recursive regularization]
\label{lem:rr-noise}
suppose
$
    \E\bigl[
      \phi_{j-1}(\widehat x_j)-\phi_{j-1}^*
    \bigr]
    \le \delta_j.
$
Then the bias satisfies
\[
    \|g_j(x)-\nabla\phi_j(x)\|_*
    \le \cB.
\]
Moreover, conditioned on the previous recursive-regularization
iterates, the state-dependent variance satisfies
\[
    \E\|\cG_j(x,\xi)-g_j(x)\|_*^2
    \le
    \cL\bigl(\phi_j(x)-\phi_j^*\bigr)
    +\widehat\sigma_{j,*}^2,
\]
where
$
    \widehat\sigma_{j,*}^2
    :=
    \sigma_*^2
    +
    \cL
    \sum_{i=1}^j
    \tfrac{\rho_i}{2}
    \|x_{i-1}^*-\widehat x_i\|^2.
$
In particular, $
    \E[\widehat\sigma_{j,*}^2]
    \le
    \sigma_*^2
    +
    \cL\sum_{i=1}^j\delta_i.
$
\end{lemma}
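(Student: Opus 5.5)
The proof is a direct computation from the definitions of $\phi_j$, $\cG_j$, $g_j$, with the variance bound reduced to the original assumption \eqref{assump:state-dependent}. Conditioning on the previous RR iterates $\widehat x_1,\ldots,\widehat x_j$ makes the regularizers deterministic.

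\textbf{Bias.} Since $\nabla\phi_j(x)=\nabla f(x)+\sum_{i=1}^j\rho_i(x-\widehat x_i)$, the regularization terms cancel and
\[
g_j(x)-\nabla\phi_j(x)=g(x)-\nabla f(x),
\]
whose norm is at most $\cB$ by \eqref{eq:bias}. The same cancellation handles the noise: the additive terms are deterministic given the past, so $\cG_j(x,\xi)-g_j(x)=\cG(x,\xi)-g(x)$. Hence the conditional variance is at most $\cL(f(x)-f^*)+\sigma_*^2$.

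\textbf{Main step.} It remains to show
\[
f(x)-f^*\le \phi_j(x)-\phi_j^*+\sum_{i=1}^j\tfrac{\rho_i}{2}\|x_{i-1}^*-\widehat x_i\|^2 .
\]
I would argue by telescoping. First, $\phi_i\ge\phi_{i-1}$ pointwise, so $\phi_i(x)-\phi_{i-1}(x)\ge0$. Second, for the optimal values,
\[
\phi_i^*\le\phi_i(x_{i-1}^*)=\phi_{i-1}^*+\tfrac{\rho_i}{2}\|x_{i-1}^*-\widehat x_i\|^2 .
\]
Combining the two, $(\phi_{i-1}(x)-\phi_{i-1}^*)\le(\phi_i(x)-\phi_i^*)+\tfrac{\rho_i}{2}\|x_{i-1}^*-\widehat x_i\|^2$. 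Summing from $i=1$ to $j$, with $\phi_0=f$ and $\phi_0^*=f^*$, gives the claim. Multiplying by $\cL$ and adding $\sigma_*^2$ yields the stated bound with $\widehat\sigma_{j,*}^2$.

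\textbf{Expectation bound.} Each $\phi_{i-1}$ is $(\mu+\sum_{k<i}\rho_k)$-strongly convex, and $\sum_{k<i}\rho_k=(2^{i-1}-1)\mu$, so the modulus is $2^{i-1}\mu=\rho_i$. Strong convexity of $\phi_{i-1}$ at its minimizer over $X$ gives
\[
\tfrac{\rho_i}{2}\|\widehat x_i-x_{i-1}^*\|^2\le\phi_{i-1}(\widehat x_i)-\phi_{i-1}^* .
\]
Taking expectations and using the hypothesis $\E[\phi_{i-1}(\widehat x_i)-\phi_{i-1}^*]\le\delta_i$ gives $\E[\widehat\sigma_{j,*}^2]\le\sigma_*^2+\cL\sum_{i=1}^j\delta_i$.

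The only delicate point is the telescoping comparison between $f-f^*$ and $\phi_j-\phi_j^*$, which requires the minimizers $x_{i-1}^*$ over the constraint set rather than unconstrained ones. The identity $\rho_i=2^{i-1}\mu$ is exactly what makes the strong-convexity step produce the constant $\rho_i/2$ with no extra factor. The hypothesis $\E[\phi_{i-1}(\widehat x_i)-\phi_{i-1}^*]\le\delta_i$ is needed for every $i\le j$, which I read as intended by the statement.
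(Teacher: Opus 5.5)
Your proposal is correct and follows the paper's proof essentially step for step. Both use the cancellation of the deterministic regularizers for the bias and the noise, the one-step comparison $\phi_{i-1}(x)-\phi_{i-1}^*\le\phi_i(x)-\phi_i^*+\tfrac{\rho_i}{2}\|x_{i-1}^*-\widehat x_i\|^2$ telescoped down to $f-f^*$, and the $\rho_i$-strong convexity of $\phi_{i-1}$ (from $\mu+\sum_{r<i}\rho_r=\rho_i$) for the expectation bound; you are also right that the accuracy hypothesis is needed for every $i\le j$, which is how the paper uses it.
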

We defer the proof of Lemma~\ref{lem:rr-noise} to Appendix~\ref{proof:lem:rr-noise}.

The second ingredient is the deterministic RR reduction: sufficiently accurate solutions of the regularized subproblems imply a small projected-gradient mapping for the original objective.

\begin{proposition}[Recursive-regularization reduction; cf.~\citet{allen2018gradient,foster2019complexity}]
\label{prop:rr-pg}
Let $S
    :=
    1+\left\lceil\log_2\tfrac{L}{\mu}\right\rceil$, 
    $
    \rho_j:=2^{j-1}\mu.
$
For $j=1,\ldots,S$, suppose $\widehat x_j$ is an approximate
minimizer of $\phi_{j-1}$ over $X$ satisfying
$
    \E[
       \phi_{j-1}(\widehat x_j)-\phi_{j-1}^*
    ]
    \le\delta_j.
$
Then, with $
    \gamma=\frac{1}{2L},
$
the final point $\widehat x_S$ satisfies
\begin{align}
    \E\|\PG_{X}(\widehat x_S)\|
    \le
    (2+\sqrt{2})\sum_{j=1}^{S-1}\sqrt{\rho_j\delta_j}
+
6\sqrt{\rho_S\delta_S}.
\label{eq:RR_reduction}
\end{align}
\end{proposition}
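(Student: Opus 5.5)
The plan is to reduce the projected gradient of $f$ at $\widehat x_S$ to two pieces: the projected gradient of the last regularized objective $\phi_{S-1}$, and the accumulated regularization gradients $\sum_{i<S}\rho_i(\widehat x_S-\widehat x_i)$. Each piece is then controlled through the suboptimality gaps $\delta_j$.

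\textbf{Step 1 (moduli and distance bounds).} I first record the curvature of each $\phi_{j-1}$. Since $\sum_{i\le j-1}\rho_i=(2^{j-1}-1)\mu$, the function $\phi_{j-1}$ is $2^{j-1}\mu=\rho_j$-strongly convex and $(L+(2^{j-1}-1)\mu)$-smooth. With $S-1=\lceil\log_2(L/\mu)\rceil$ we get $L\le\rho_S\le 2L$, so $\phi_{S-1}$ is at most $3L$-smooth. Strong convexity on $X$, together with first-order optimality of $x_{j-1}^*$, gives $\frac{\rho_j}{2}\|\widehat x_j-x_{j-1}^*\|^2\le\phi_{j-1}(\widehat x_j)-\phi_{j-1}^*$. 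Hence, by Jensen,
\[
\E\|\widehat x_j-x_{j-1}^*\|\le\sqrt{2\delta_j/\rho_j}.
\]

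\textbf{Step 2 (splitting the projected gradient).} Write $\nabla f(x)=\nabla\phi_{S-1}(x)-\sum_{i<S}\rho_i(x-\widehat x_i)$. Because $\Pi_X$ is nonexpansive, for $\gamma=1/(2L)$ we have
\[
\|\PG^f_{X,\gamma}(x)-\PG^{\phi_{S-1}}_{X,\gamma}(x)\|\le\|\nabla f(x)-\nabla\phi_{S-1}(x)\|.
\]
Therefore
\[
\|\PG_X(\widehat x_S)\|\le\|\PG^{\phi_{S-1}}_{X,\gamma}(\widehat x_S)\|+\sum_{i=1}^{S-1}\rho_i\|\widehat x_S-\widehat x_i\|.
\]
For the first term I apply the descent inequality \eqref{decent_ineq} of Lemma~\ref{lem:pg-certificate} to $\phi_{S-1}$, with smoothness $3L$ and $\gamma=1/(2L)\le 2/(3L)$. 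Since $\gamma-\tfrac{3L\gamma^2}{2}=\tfrac{1}{8L}$ and $\phi_{S-1}(x^+)\ge\phi_{S-1}^*$, this gives $\|\PG^{\phi_{S-1}}\|^2\le 8L(\phi_{S-1}(\widehat x_S)-\phi_{S-1}^*)$. Using $L\le\rho_S$ and Jensen, the first term contributes at most $2\sqrt2\sqrt{\rho_S\delta_S}$.

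\textbf{Step 3 (the accumulated regularization terms).} This is the step I expect to be the main obstacle, since with a constraint set the minimizer-movement argument must go through variational inequalities rather than $\nabla\phi=0$. The key fact I want is
\[
\|x_k^*-x_{k-1}^*\|\le\|\widehat x_k-x_{k-1}^*\|.
\]
To prove it, I add the optimality inequalities of $x_{k-1}^*$ for $\phi_{k-1}$ and of $x_k^*$ for $\phi_k=\phi_{k-1}+\frac{\rho_k}{2}\|\cdot-\widehat x_k\|^2$, each tested against the other point. Monotonicity of $\nabla\phi_{k-1}$ then gives $\rho_k\langle x_k^*-\widehat x_k,x_k^*-x_{k-1}^*\rangle\le0$. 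This says $x_k^*$ lies no farther from $\widehat x_k$ than $x_{k-1}^*$ does, and a short computation yields the displayed bound.

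Next I telescope. Using $\|\widehat x_S-\widehat x_i\|\le\|\widehat x_S-x_{S-1}^*\|+\sum_{k=i}^{S-1}\|x_k^*-x_{k-1}^*\|+\|x_{i-1}^*-\widehat x_i\|$ and exchanging the order of summation, the factor multiplying each $a_k:=\|\widehat x_k-x_{k-1}^*\|$ is bounded by $\sum_{i\le k}\rho_i\le 2\rho_k$ (or by $\rho_k$ itself for the endpoint terms). Combined with $\E a_k\le\sqrt{2\delta_k/\rho_k}$ from Step 1, every $k<S$ contributes at most a constant times $\sqrt{\rho_k\delta_k}$, and the $k=S$ terms add to the $\sqrt{\rho_S\delta_S}$ coefficient.

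The last task is to tune this bookkeeping so the constants come out as $(2+\sqrt2)$ and $6$. For instance, the endpoint term $\sum_{i<S}\rho_i\|\widehat x_S-x_{S-1}^*\|\le\rho_S\|\widehat x_S-x_{S-1}^*\|$ adds $\sqrt2\sqrt{\rho_S\delta_S}$ to the $2\sqrt2$ from Step 2, giving at most $6$. If the crude triangle inequality loses too much, I would instead bound $\|\widehat x_S-\widehat x_i\|$ by $\|\widehat x_S-x_{S-1}^*\|$ plus the path through the minimizers, $x_{S-1}^*\to\cdots\to x_{i-1}^*\to\widehat x_i$, and then track the geometric sums $\sum_{i\le k}\rho_i=(2^k-1)\mu<\rho_{k+1}=2\rho_k$ carefully.
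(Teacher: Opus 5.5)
Your Steps 1--2 and your handling of the $\sqrt{\rho_S\delta_S}$ terms are sound. $\phi_{S-1}$ is $(L+\rho_S-\mu)$-smooth, so $3L$ is the right bound. The paper's proof writes $2L$, which fails for instance at $L/\mu=5$, though its conclusion for that term still holds. You bound $(\sum_{i<S}\rho_i)\|\widehat x_S-x_{S-1}^*\|$ directly through $\E\|\widehat x_S-x_{S-1}^*\|\le\sqrt{2\delta_S/\rho_S}$, rather than through $\rho_S\|\widehat x_S-x_{S-1}^*\|\le 2\|\PG^{\phi_{S-1}}_{X,\gamma}(\widehat x_S)\|$ as the paper does. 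This gives the coefficient $2\sqrt2+\sqrt2=3\sqrt2\le 6$.

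The gap is the coefficient $(2+\sqrt2)$ for $k<S$, which your bookkeeping does not reach. Take the path $\widehat x_S\to x_{S-1}^*\to\cdots\to x_{i-1}^*\to\widehat x_i$ with $\|x_k^*-x_{k-1}^*\|\le a_k$. The total weight on $a_k$ is $\sum_{i\le k}\rho_i+\rho_k=3\rho_k-\mu$. With $\E a_k\le\sqrt{2\delta_k/\rho_k}$ you therefore get only $(3-\mu/\rho_k)\sqrt2\sqrt{\rho_k\delta_k}$. At $k=2$ this is already $\tfrac52\sqrt2\approx 3.54>2+\sqrt2$, and it tends to $3\sqrt2$. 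Your fallback uses the same path, and tracking $\sum_{i\le k}\rho_i=2\rho_k-\mu$ exactly only recovers the $-\mu$. So as written you prove the inequality with $3\sqrt2$ in place of $2+\sqrt2$.

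Either of two small changes closes it.

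\textbf{(i) Strong monotonicity.} Use strong rather than plain monotonicity in your variational-inequality argument. Since $\phi_{k-1}$ is $\rho_k$-strongly convex, adding the two optimality conditions gives $\langle\widehat x_k-x_k^*,\,x_k^*-x_{k-1}^*\rangle\ge\|x_k^*-x_{k-1}^*\|^2$. Hence $\langle\widehat x_k-x_{k-1}^*,\,x_k^*-x_{k-1}^*\rangle\ge 2\|x_k^*-x_{k-1}^*\|^2$, so $\|x_k^*-x_{k-1}^*\|\le a_k/2$. The weight on $a_k$ then drops below $2\rho_k$, and each $k<S$ contributes at most $2\sqrt2\sqrt{\rho_k\delta_k}$.

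\textbf{(ii) The paper's route.} Stop the path at $x_i^*$ rather than $x_{i-1}^*$. Then use the sharper estimate $\E\|\widehat x_i-x_i^*\|\le\sqrt{\delta_i/\rho_i}$, which follows from $\phi_i(\widehat x_i)=\phi_{i-1}(\widehat x_i)$, $\phi_i^*\ge\phi_{i-1}^*$, and the $2\rho_i$-strong convexity of $\phi_i$. The step $x_k^*\to x_{k-1}^*$ now carries weight $\sum_{i<k}\rho_i<\rho_k$. Bounding it by $\|x_k^*-\widehat x_k\|+a_k$ gives exactly $(2+\sqrt2)\sqrt{\rho_k\delta_k}$; this is what the paper's telescoping of $P_t=\sum_{j\le t}\rho_j\|x_t^*-\widehat x_j\|$ amounts to. Bounding it with your key fact instead gives $(1+\sqrt2)\sqrt{\rho_k\delta_k}$.
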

The proof of Proposition~\ref{prop:rr-pg} follows from the analysis techniques in \citet{allen2018gradient,foster2019complexity}. For the sake of completeness, we provide a complete proof in Appendix~\ref{proof:prop:rr-pg}.

\paragraph{Proof of Theorem~\ref{thm:rr-sge-subroutine}.}
\begin{proof}
For a sufficiently small universal constant $a>0$, set
\begin{equation}
    \delta_j
    :=
    \tfrac{a\mu^2R^2}{S^2\rho_j},
    \quad
    \rho_j=2^{j-1}\mu,
    \quad
    j=1,\ldots,S.
\label{eq:cert-delta-choice}
\end{equation}
We show that SGE can solve every recursively regularized subproblem
to the accuracy $\delta_j$ with the stated total complexity by inductive arguments.

For the first subproblem, condition on the possibly random warm start $\widehat x_0$, apply the SGE bound in Theorem~\ref{thm:sge}, and then average over $\widehat x_0$. Since
$
\E\|\widehat x_0-x^*\|^2\le R^2,
$
the resulting first-stage bound is identical to the deterministic-warm-start case. Applying \eqref{eq:sge-rate} with mini-batch size
$m=1$, it suffices to use
\begin{equation}
k_1
=
O\left(
S\sqrt{\tfrac{L}{\mu}}
+
\tfrac{S\cL}{\mu}
+
\tfrac{S^2\sigma_*^2}{\mu^2R^2}
\right)
\label{eq:cert-first-budget}
\end{equation}
iterations to make all vanishing terms in the SGE bound smaller than
a fixed fraction of $\delta_1$. Note that the bias floor in Ineq.~\eqref{eq:sge-rate} is
$O(\cB^2/\mu)$, condition \eqref{eq:cert-bias-condition}, with $c_0$ sufficiently small,
ensures
$
    \tfrac{\cB^2}{\mu}
    \lesssim
    \tfrac{a\mu R^2}{S^2}
    =
    \delta_1,
$
and consequently $\bbe[\phi_{0}(\widehat x_{1}) - \phi^*_{0}] \leq \delta_{1}$.

Now consider $j\ge2$. Assume
$\widehat x_{j-1}$ is a $\delta_{j-1}$-approximate minimizer of
$\phi_{j-2}$, i.e., $\bbe[\phi_{j-2}(\widehat x_{j-1}) - \phi^*_{j-2}] \leq \delta_{j-1}$. Then Ineq.~\eqref{eq:rr-new-dist} indicates that the initial distance to the optimal solution
$
\E\|\widehat x_{j-1}-x_{j-1}^*\|^2
\le
\tfrac{\delta_{j-1}}{\rho_{j-1}},
$
and the initial optimality gap
\begin{equation}
\bbe[\phi_{j-1}(\widehat x_{j-1}) -\phi_{j-1}^*]
\le \bbe[\phi_{j-2}(\widehat x_{j-1}) -\phi_{j-2}^*]\leq {\delta_{j-1}},
\label{eq:initial_optimality_gap}
\end{equation}
In the proof of Proposition~\ref{prop:rr-pg}, we show that the $j$-th subproblem $\phi_{j-1}$ is $\rho_j$-strongly convex and
has smoothness at most $2L$. By Lemma~\ref{lem:rr-noise}, the effective residual variance at this
level satisfies
\[
\E[\widehat\sigma_{j-1,*}^2]
\le
\sigma_*^2
+
\cL\sum_{i=1}^{j-1}\delta_i \le
\sigma_*^2
+
\tfrac{2a\cL\mu R^2}{S^2}.
\label{eq:cert-effective-variance}
\]
where the second inequality follows from \eqref{eq:cert-delta-choice}. By the first inequality in \eqref{eq:sge-rate} of Theorem~\ref{thm:sge}, we have
\begin{align*}
\E[\phi_{j-1}(x_{k_j})-\phi_{j-1}^*]
\le &\mathcal{O}\left(\left(\tfrac{L}{k_j^2}+\tfrac{\cL}{mk_j^2}+\tfrac{\cL^2}{\rho_j m^2k_j^2}\right)\tfrac{\delta_{j-1}}{\rho_{j-1}}+ \tfrac{\delta_{j-1}}{k_j^2}+\tfrac{\sigma_*^2}{\rho_j  mk_j} + \tfrac{a \cL\mu R^2}{\rho_jS^2 m k_j}+\tfrac{\cB^2}{\rho_j }\right). 
\end{align*}
Note that $
    \tfrac{
       \delta_{j-1}
    }{\rho_{j-1}\delta_j}
    =
    \tfrac{4}{\rho_j}.
$ Therefore, with mini-batch size
$m=1$, it suffices to use
\begin{equation}
k_j
=
O\left(
\sqrt{\tfrac{L}{\rho_j}}
+
\tfrac{\cL}{\rho_j}
+
\tfrac{S^2\sigma_*^2}{\mu^2R^2} + \tfrac{\cL}{\mu}
\right)
\label{eq:cert-j-th-budget}
\end{equation}
to make all vanishing terms (terms other than $\tfrac{\cB^2}{\rho_j}$) smaller than a fixed fraction of $\delta_j$. Meanwhile, condition \eqref{eq:cert-bias-condition} suggests that, when $c_0$ is sufficiently small, the bias floor satisfies
$
    \tfrac{\cB^2}{\rho_j}
    \lesssim
    \tfrac{a\mu^2R^2}{S^2\rho_j}
    =
    \delta_j.
$
As a consequence, we have $\bbe[\phi_{j-1}(\widehat x_{j}) - \phi^*_{j-1}] \leq \delta_{j}$, and the inductive argument is complete.

Now we check the total sample complexity. Summing up $k_j$ over all
$S=O(\log(L/\mu))$ recursive-regularization levels, we obtain
\begin{align*}
\sum_{j=1}^S k_j = \mathcal{O}\left(
    S\sqrt{\tfrac{L}{\mu}}
    +
    \tfrac{S\cL}{\mu}
+
\tfrac{S^3\sigma_*^2}{\mu^2R^2}
\right).
\end{align*}

It remains to prove the projected-gradient guarantee.
By Ineq.~\eqref{eq:RR_reduction} in Proposition~\ref{prop:rr-pg} and Ineq.~\eqref{eq:cert-delta-choice},
\[
\begin{aligned}
\E\|\PG_{X}(\widehat y)\|
\leq (2+\sqrt{2})\sum_{j=1}^{S-1}\sqrt{\rho_j\delta_j}
+
6\sqrt{\rho_S\delta_S}
=
\big((2+\sqrt{2})S + 4 -\sqrt{2}\big)\tfrac{\sqrt a\mu R}{S}
\leq 6
\sqrt a\mu R.
\end{aligned}
\]
Choosing $\sqrt{a}\le\sqrt{2}/96$ yields $\E\|\PG_{X}(\widehat y)\|
    \le
    \tfrac{\sqrt{2}\mu R}{16}$, which proves the first inequality in Ineq.~\eqref{eq:cert-constant-prob}.
Finally, by using Markov's inequality, we have
$
\Prob\left(
  \|\PG_{X}(\widehat y)\|
  >
  \tfrac{\sqrt{2}\mu R}{4}
\right)
\le
\tfrac{\sqrt{2}\mu R/16}{\sqrt{2}\mu R/4}
=
\tfrac14,
$
which proves the second inequality in Ineq.~\eqref{eq:cert-constant-prob} and completes the proof.
\end{proof}


\subsection{How often should we certify? The training-certification cost tradeoff}\label{subsec:certification-frequency}

This subsection studies the tradeoff for the certification frequency. First, we provide an efficient algorithmic benchmark for comparison.


\paragraph{A PG-admissible switching benchmark.}
The most straightforward algorithmic benchmark is the adaptive switching policy in Algorithm~\ref{alg:known} with known bias from Section~\ref{sec:known_bias}. However, Algorithm~\ref{alg:known} only relies on the contraction-admissible condition, while in this section, we require a slightly stronger PG-admissible condition. Thus, to avoid the compatibility issue between the benchmark and the certification subroutine, we compare with a slightly more restrictive version of the oracle-informed switching policy as in Algorithm~\ref{alg:known}. Define
\begin{equation}
\cA_k^{\rm PG}
:=
\left\{
\lambda:
\mathsf{Adm}_{\rm PG}(R_k,\cB_\lambda)
\text{ holds}
\right\},
\quad
C_k^\star
:=
\min_{\lambda\in\cA_k^{\rm PG}}C_\lambda(k),
\label{eq:pg-switching-benchmark-stage}
\end{equation}
where $C_\lambda(k)$ is defined in \eqref{def_stage_cost}.
Assume $\cA_k^{\rm PG}\neq\varnothing$ for $k=0,\ldots,K-1$. The corresponding single-trial switching cost is
\begin{equation}
C_{\rm sw}^{\rm PG}
:=
\sum_{k=0}^{K-1}C_k^\star.
\label{eq:pg-switching-benchmark}
\end{equation}
Here, the only difference from the known-bias benchmark in Algorithm~\ref{alg:known} is that the admissible set is defined using the projected-gradient condition rather than the distance-contraction condition. For RR+SGE, Theorem~\ref{thm:rr-sge-subroutine} shows that this strengthens the bias requirement only by the logarithmic factor of
$
S=1+\lceil\log_2(L/\mu)\rceil.
$
Additionally, the compatibility observation \eqref{relationship_admissiblity} implies that terminal PG-admissibility automatically makes the corresponding homogeneous block admissible, thus no additional block-validity assumption is needed.

Note that the set $\cA_k^{\rm PG}$ shrinks as the radius decreases. On the high-probability event of Proposition~\ref{prop:budgeted-correctness}, an oracle that belongs to $\cA_k^{\rm PG}$ cannot have been falsely removed at an earlier, coarser level. Thus the current oracle pool should contain an oracle attaining $C_k^\star$. Moreover, by the homogeneous-block construction and the economic-switch rule \eqref{eq:block-stop-switch}, the block oracle $\lambda$ remains a cheapest oracle in the current pool at every stage $k$ of that block, including its terminal stage; therefore
\begin{equation}
C_k(\lambda)
\le
C_k^\star
\quad
\text{for every stage $k$ visited by the block.}
\label{eq:block-vs-pg-switching}
\end{equation}

 We define the first PG-inadmissible level of oracle $\lambda$ by
\begin{equation}
\kappa_\lambda
:=
\min\left\{
k\in\{0,\ldots,K-1\}:
\lambda\notin\cA_k^{\rm PG}
\right\},
\label{eq:first-pg-inadmissible-stage}
\end{equation}
with $\kappa_\lambda=K$ if the oracle remains PG-admissible throughout. We further define a cost overhead quantity
\begin{equation}
C_{\rm bad}^{\rm PG}
:=
\sum_{\lambda:\,\kappa_\lambda<K}
\max_{\substack{
k\in\{\kappa_\lambda,\ldots,K-1\}:\\
C_k(\lambda)\le C_k^\star
}}
\widetilde C_k(\lambda),
\label{eq:bad-oracle-identification-cost}
\end{equation}
where the maximum over an empty set is zero. The quantity $C_{\rm bad}^{\rm PG}$ is independent of $B$ and captures an unavoidable oracle-identification effect. Specifically, when an oracle becomes economically competitive but PG-inadmissible, it will be selected by the algorithm and may result in a failure block, thus being eliminated consequently.
However, failure of $\mathsf{Adm}_{\rm PG}$ is only failure of a sufficient condition: an oracle can already be PG-inadmissible and nevertheless continue to pass the exact certificate for some later stages. Thus, even for the extreme choice $B=0$, which certifies after every stage, the algorithm may continue using such an oracle until a later certification finally fails. The maximum in \eqref{eq:bad-oracle-identification-cost} quantifies this overhead PG cost for each oracle once for the most expensive scale at which it is already PG-inadmissible but still economically competitive with the best PG-admissible oracle. 


Moreover, to quantify the tradeoff for choosing different $B$, we define $M$ as the number of failed blocks and $Q$ as the number of fully successful blocks terminated by the economic-switch condition \eqref{eq:block-stop-switch}. Clearly, we have $M \leq \ell$. The next lemma provides a $B$-independent upper bound for $Q$.

\begin{lemma}[Economic switches]
\label{lem:block-counts}
Under the SGE stage-cost \eqref{eq:stage-complexity}, we have
$
Q\le\binom{\ell}{2}.
$
\end{lemma}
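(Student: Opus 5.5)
We will charge every economic switch to an unordered pair of oracles, and show that no pair is charged twice. Under the SGE stage cost \eqref{eq:stage-complexity} with $R_k^2=2^{-k}R_0^2$, every oracle has a stage cost of the form
\[
C_\lambda(k)=c_\lambda\max\{D_\lambda,\,E_\lambda 2^k/R_0^2\}.
\]
Hence $h_\lambda(k):=\log_2 C_\lambda(k)=\max\{a_\lambda,\,b_\lambda+k\}$ for constants $a_\lambda,b_\lambda$. This is the only structure of the stage cost that we will use.

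\emph{Step 1 (single-crossing property).} For any two oracles $\lambda,\nu$, we claim that $k\mapsto h_\lambda(k)-h_\nu(k)$ is monotone. Let $k_\lambda=a_\lambda-b_\lambda$ and $k_\nu=a_\nu-b_\nu$ be the breakpoints, and suppose without loss of generality that $k_\lambda\le k_\nu$. The difference is then constant on $k\le k_\lambda$, where both functions are flat. It is nondecreasing on $[k_\lambda,k_\nu]$, where only $h_\lambda$ grows, with slope one. It is constant again for $k\ge k_\nu$, where both functions grow with slope one. So the difference is nondecreasing in this case, and nonincreasing in the symmetric case. Consequently, once the strict order between $C_\lambda$ and $C_\nu$ flips in one direction along increasing $k$, it can never flip back.

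\emph{Step 2 (each economic switch is a crossing).} Consider a fully successful block started at certified level $s$ with oracle $\lambda$ and terminated by \eqref{eq:block-stop-switch} at terminal stage $k$. By the block construction, $\lambda$ is a cheapest oracle in the current pool at every visited stage, including $k$. This holds at $s$ by \eqref{eq:block-oracle-choice}, and at each later stage because otherwise the block would have stopped earlier. So $C_\lambda(k)\le C_\nu(k)$ for all $\nu\in\cP$. The switching condition supplies some $\nu\in\cP$ with $C_\nu(k+1)<C_\lambda(k+1)$. We charge this switch to the triple $(\{\lambda,\nu\},k)$. By Step 1, $h_\lambda-h_\nu$ goes from $\le0$ at $k$ to $>0$ at $k+1$, so it is nondecreasing. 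Hence $C_\lambda(k')\ge C_\nu(k')$ and $C_\nu(k'')<C_\lambda(k'')$ for all $k'\ge k+1$ and $k''\ge k+1$.

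\emph{Step 3 (injectivity).} Suppose a later economic switch is charged to the same pair, at terminal stage $k'$. Certified levels strictly increase after a fully successful block ($s\gets t>s$) and never decrease, so $k'\ge k+1$. If the later block uses $\lambda$ and is overtaken by $\nu$, we need $C_\lambda(k')\le C_\nu(k')$ and $C_\nu(k'+1)<C_\lambda(k'+1)$. The first inequality would force $h_\lambda-h_\nu$ to be $\le 0$ at $k'$ after being $>0$ at $k+1\le k'$, contradicting monotonicity. If instead the later block uses $\nu$ and is overtaken by $\lambda$, we need $C_\lambda(k'+1)<C_\nu(k'+1)$, which Step 2 excludes. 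Therefore distinct economic switches are charged to distinct unordered pairs, and $Q\le\binom{\ell}{2}$.

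The main obstacle is Step 1, together with handling ties correctly. We use the non-strict inequality $C_\lambda(k)\le C_\nu(k)$ before the switch and the strict inequality after it. This is what makes the flip a genuine sign change of the monotone difference, so that ties in the argmin cannot produce repeated charges. We also need that pool removals and partial-progress updates only remove oracles or raise the level, so they never invalidate the charging argument.
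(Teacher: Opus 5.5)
Your proof is correct and follows the same route as the paper: the SGE stage cost $\max\{\alpha_\lambda,\beta_\lambda 2^k\}$ gives a single-crossing property for every pair of oracles, so each unordered pair can be charged with at most one economic switch, which yields $Q\le\binom{\ell}{2}$. Your version spells out what the paper's short argument leaves implicit: the monotonicity of the log-cost difference, the handling of ties through the non-strict/strict inequalities, and the use of nondecreasing certified levels to make the charging injective.
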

\begin{proof}
From \eqref{eq:stage-complexity}, the stage cost of SGE has the form
$
C_k(\lambda)=\max\{\alpha_\lambda,\beta_\lambda2^k\}.
$
For any two oracles $\lambda$ and $\nu$, their strict ordering can change at most once as $k$ increases. 
Thus one unordered stochastic oracle pair can trigger at most one strict economic switch. Since removed oracles never return to the pool, we have $Q\le\binom{\ell}{2}$.
\end{proof}
It should be noted that Lemma~\ref{lem:block-counts} only provides a conservative upper bound for $Q$. However, an oracle can be selected in two non-consecutive blocks only when, in the later block, the more economically competitive oracles are already eliminated from the pool. This does not happen very often in real implementations. If most oracles are not selected in two non-consecutive blocks, $Q$ should be bounded by $O(\ell)$. Moreover, as discussed in Section~\ref{sec:when-pay} and shown in our numerical experiments, a few oracles may be strictly dominated by others and will never be selected, which further reduces $Q$.

For notational simplicity, we also define a ratio factor $\zeta\ge1$ between the cost of a contraction subroutine (Definition~\ref{def:contraction-routine})  and a PG subroutine (Definition~\ref{def:pg-reduction-routine}). Specifically, 
\begin{equation}
\widetilde T(R, \lambda)
\le
\zeta \cdot T(R, \lambda),
\quad
\forall R,\lambda.
\label{eq:pg-cost-comparable}
\end{equation}
By Theorem~\ref{thm:rr-sge-subroutine}, RR+SGE satisfies this relation with $\zeta = O(S^3)$ containing only additional logarithmic factors in the condition number. In the whitened numerical setting of Section~\ref{sec:numerics}, $L=\mu$ and hence $S=1$, so we have $\widetilde T=O(T)$ and $\zeta=O(1)$. 

Let $\Cost_B$ denote the total cost of our budgeted sparse certification algorithm (Algorithm~\ref{alg:budgeted-certification}), including both training and certification costs. We have the following upper bound guarantees.

\begin{theorem}[Cost of budgeted sparse certification]
\label{thm:budgeted-cost}
Consider Algorithm~\ref{alg:budgeted-certification} on the high-probability event in Proposition~\ref{prop:budgeted-correctness}. For $B>0$, let
$
\bar B:=\min\{B,C_{\rm sw}^{\rm PG}\}.
$
We have
\begin{align}
\Cost_B
\le\;&
\mathcal R
\left[
\zeta C_{\rm sw}^{\rm PG}
+
C_{\rm bad}^{\rm PG}
+
M\bar B
\right]
\nn\\
&+
\mathcal R c_{\rm cert}
\left[
\min\left\{
K,\,
\left\lceil\frac{C_{\rm sw}^{\rm PG}}{B}\right\rceil+Q+1
\right\}
+M
\right].
\label{eq:budgeted-cost-first}
\end{align}
\end{theorem}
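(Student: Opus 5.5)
We split the blocks executed by Algorithm~\ref{alg:budgeted-certification} into fully successful blocks and failed blocks, and bound training and certification costs separately for each. Throughout we work on the high-probability event of Proposition~\ref{prop:budgeted-correctness}, so no PG-admissible oracle is ever removed. Then \eqref{eq:block-vs-pg-switching} holds: every stage $k$ visited by any block is run with an oracle whose contraction cost satisfies $C_k(\lambda)\le C_k^\star$. Every block uses at most $\mathcal R$ trials, and each trial costs one block's worth of training plus one trusted query. This produces the overall factor $\mathcal R$ in both brackets of \eqref{eq:budgeted-cost-first}.

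\textbf{Training cost of successful blocks.} A successful block from $s$ to $t$ advances the certified level from $s$ to $t$. The stages it visits are therefore disjoint from those of other successful blocks. Partial-progress advances from failed blocks also only move the level forward, so no stage is trained in two successful blocks. Inside a successful block, the contraction stages $k=s,\dots,t-2$ cost $C_k(\lambda)\le C_k^\star$. The terminal PG stage costs $\widetilde C_{t-1}(\lambda)\le\zeta C_{t-1}(\lambda)\le \zeta C_{t-1}^\star$ by \eqref{eq:pg-cost-comparable}. Summing over successful blocks, and using $\zeta\ge1$, gives at most $\zeta\sum_k C_k^\star=\zeta C_{\rm sw}^{\rm PG}$ per trial.

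\textbf{Training cost of failed blocks.} A failed block with oracle $\lambda$ from $s$ to $t$ has contraction cost at most $B$ by \eqref{eq:block-budget}. This part is also at most $\sum_{k=s}^{t-2}C_k^\star\le C_{\rm sw}^{\rm PG}$, which gives $\bar B$ per failed block and $M\bar B$ in total. Its terminal PG cost is $\widetilde C_{t-1}(\lambda)$, which we charge to $C_{\rm bad}^{\rm PG}$. Since the block failed on the good event, $\lambda$ must be PG-inadmissible at $R_{t-1}$; otherwise Lemma~\ref{lem:block-validity} together with \eqref{relationship_admissiblity} would make the block admissible and hence, on the event, successful. Hence $t-1\ge\kappa_\lambda$. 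We also have $C_{t-1}(\lambda)\le C_{t-1}^\star$ by \eqref{eq:block-vs-pg-switching}. So $\widetilde C_{t-1}(\lambda)$ is bounded by the maximum in \eqref{eq:bad-oracle-identification-cost}. Each oracle fails at most once because it is then removed, so these terms sum to at most $C_{\rm bad}^{\rm PG}$.

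\textbf{Certification cost.} Each block uses at most $\mathcal R$ trusted queries, so it suffices to count blocks. There are at most $M$ failed blocks. The number of successful blocks is at most $K$, since each advances the level. To get the $\lceil C_{\rm sw}^{\rm PG}/B\rceil+Q+1$ bound, classify each successful block by why it terminated: the final level \eqref{eq:block-stop-final}, which happens at most once; economic switching \eqref{eq:block-stop-switch}, which happens $Q$ times by definition; or the budget \eqref{eq:block-stop-budget}. A budget-terminated block ending at terminal stage $t-1$ satisfies $W+C_\lambda(t-1)>B$. Its visited stages therefore have total contraction cost $\sum_{k=s}^{t-1}C_k(\lambda)>B$, and by \eqref{eq:block-vs-pg-switching} this is at most $\sum_{k=s}^{t-1}C_k^\star$. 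These stage sets are disjoint across successful blocks, so the number of budget-terminated blocks is less than $C_{\rm sw}^{\rm PG}/B$. A partial advance followed by a resumed block could split stages, but the disjointness argument still goes through because we only sum over successful blocks.

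The main obstacle is the charging in the failed-block case. We must verify that failure on the good event forces $t-1\ge\kappa_\lambda$, which uses the compatibility \eqref{relationship_admissiblity} and the monotonicity of the radii. We must also verify that \eqref{eq:block-vs-pg-switching} really holds at the terminal stage even when the pool has shrunk; this needs the argument that the oracle attaining $C_k^\star$ is still present. The remaining steps are bookkeeping.
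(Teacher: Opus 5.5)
Your proposal follows the paper's proof essentially step for step. It uses the same split into fully successful and failed blocks, and the same use of \eqref{eq:block-vs-pg-switching} and \eqref{eq:pg-cost-comparable} to obtain the $\zeta C_{\rm sw}^{\rm PG}$ term. It also charges failed terminal PG stages to $C_{\rm bad}^{\rm PG}$ via $t-1\ge\kappa_\lambda$ and each oracle failing at most once, and counts certifications by termination reason in the same way.

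The step you flag as the main obstacle is exactly \eqref{eq:block-vs-pg-switching}. The paper also imports it from its remark before the theorem, namely that a PG-admissible oracle cannot have been falsely removed at an earlier, coarser level, so your argument stands on the same footing as the paper's. Your caution there is warranted, though. That remark does not cover an oracle that is removed after failing at a finer terminal stage, when partial progress leaves the certified level at a stage where that oracle was still PG-admissible. Neither the paper nor your proposal closes that case.
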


\begin{proof}
First, note that the fully successful blocks cover disjoint certified level intervals. By \eqref{eq:block-vs-pg-switching}, every contraction stage in such blocks costs no more than the corresponding stage of our benchmark, while the cost of the PG stage is at most $\zeta$ times its SGE-equivalent stage cost by \eqref{eq:pg-cost-comparable}. Therefore, the single-trial stochastic-training cost of all fully successful blocks is at most $\zeta C_{\rm sw}^{\rm PG}$.\footnote{Here, we do not carefully distinguish the cost of the contraction stage and PG stage, as $\zeta$ is only a logarithmic factor of the condition number.}  Considering the sequential implementation rule with at most $\mathcal{R}$ trials, we have
\begin{align}\label{eq:successful_blocks}
    \Cost(\text{all fully successful blocks}) \leq \mathcal{R}\zeta C_{\rm sw}^{\rm PG}.
\end{align}

For a failed block, by the budget constraint rule in \eqref{eq:block-stop-budget}, the uncertified training cost of all contraction stages is at most $B$, as shown in Ineq.~\eqref{eq:block-budget}.  Meanwhile, in case $B$ is very large or even $\infty$, we can always bound the block's contraction cost by $C_{\rm sw}^{\rm PG}$ due to \eqref{eq:block-vs-pg-switching}. Therefore, the total cost of the contraction stages in all failed blocks is bounded by
\begin{align}\label{eq:failed_blocks}
    \Cost(\text{all contraction stages in all failed blocks}) \leq \mathcal{R}M \min\{B,  C_{\rm sw}^{\rm PG}\}= \mathcal{R}M \bar B.
\end{align}

It remains to account for the terminal PG costs of failed blocks, i.e.,
\[
\Cost(\text{PG stages in all failed blocks})
=
\mathcal{R}\sum_{b\ {\rm failed}}
\widetilde C_{k_b}(\lambda_b),
\]
where $\lambda_b$ and $k_b$ are the oracle and terminal stage of failed block $b$. On the high-probability event under consideration, a removed oracle cannot be PG-admissible at its terminal stage: for the SGE/RR+SGE implementation, terminal PG-admissibility makes the whole block block-admissible, and Lemma~\ref{lem:block-validity} together with Proposition~\ref{prop:budgeted-correctness} rules out such a false rejection on this event. Thus $k_b\ge\kappa_{\lambda_b}$. Moreover, because $\lambda_b$ is selected at $k_b$ while the current pool contains an oracle attaining $C_{k_b}^\star$, we have 
$
C_{k_b}(\lambda_b)\le C_{k_b}^\star
$
by \eqref{eq:block-vs-pg-switching}.
Therefore the terminal PG cost of each failed oracle is bounded by its corresponding summand in \eqref{eq:bad-oracle-identification-cost}. Since each oracle can fail at most once, we have
\begin{equation}
\Cost(\text{PG stages in all failed blocks})\le \mathcal{R}C_{\rm bad}^{\rm PG}.
\label{eq:elim-by-cbad}
\end{equation}

Finally, we need to compute the total certification cost. To do so, we need to count the successful certification blocks. If a fully successful block is terminated by the budget condition and its terminal stage is $t-1$, then
\[
\sum_{k=s}^{t-2}A_k(\lambda)+A_{t-1}(\lambda)>B.
\]
The corresponding successful block intervals are disjoint, and their total (single-trial) contraction subroutine cost is at most $C_{\rm sw}^{\rm PG}$ by \eqref{eq:block-vs-pg-switching}. Therefore, there are at most
$
\lceil C_{\rm sw}^{\rm PG}/B\rceil
$
budget-triggered successful blocks. In addition there are $Q$ economic-switch blocks and at most one final block, while the total number of successful blocks is at most $K$. Thus, the total number of successful blocks is bounded by $\min\left\{
K,\,
\left\lceil\tfrac{C_{\rm sw}^{\rm PG}}{B}\right\rceil+Q+1
\right\}$, and 
the total number of certification blocks is at most
$
\min\left\{
K,\,
\left\lceil\tfrac{C_{\rm sw}^{\rm PG}}{B}\right\rceil+Q+1
\right\}
+M,
$
indicating a total certification cost of
\begin{equation}
\Cost(\text{all certification steps})\le \mathcal{R}c_{\rm cert}
\left[
\min\left\{
K,\,
\left\lceil\tfrac{C_{\rm sw}^{\rm PG}}{B}\right\rceil+Q+1
\right\}
+M
\right].
\label{eq:total_certification}
\end{equation}
Combining Ineqs.~\eqref{eq:successful_blocks}, \eqref{eq:failed_blocks},  \eqref{eq:elim-by-cbad}, and \eqref{eq:total_certification} proves Ineq.~\eqref{eq:budgeted-cost-first}. 
\end{proof}

The theorem separates three conceptually different training costs. The term $\zeta C_{\rm sw}^{\rm PG}$ is the oracle-informed switching benchmark, which is tight up to $\zeta$, a logarithmic factor of the condition number, due to the use of the PG-reduction oracle. The quantity $C_{\rm bad}^{\rm PG}$ is a $B$-independent price of identifying oracles whose sufficient PG-admissibility condition has already failed but may still pass some later certification rounds. Finally, $M\bar B$ is the main budget-dependent training cost: a larger $B$ allows more training steps in a block before a failed oracle is detected, thus resulting in larger cost overhead. 


It should be noted that the worst-case guarantee above charges $\mathcal R$ trials to every block for the high-probability amplification, but the actual number of trials can be much smaller due to the use of the sequential strategy. In particular, Lemma~\ref{prop:sequential-block-cert} shows that this is conservative for successful block-admissible oracles: their expected number of block trials and trusted-gradient evaluations is at most $4/3$, independent of $\mathcal R$. Only an oracle that fails all trials and is permanently removed necessarily pays the full $\mathcal R$-trial price. 

\paragraph{The U-shaped dependence on $B$.} 
When $B=0$, every block is terminated immediately, so each
block consists of one PG reduction followed by certification. The benefit is that a bad oracle is discovered immediately, so there is no training overhead $M\bar B$. However, the certification module is queried at every successful stage, giving a certification cost of order $K c_{\rm cert}$. 

At the other extreme, $B=\infty$ eliminates budget-triggered checkpoints. Certification occurs only when the cost model predicts an economic switch or when the final target level is reached. If one inexpensive oracle remains the predicted cost minimizer throughout, there may be no intermediate certificate. However, if that oracle becomes too biased at a fine scale, the method can then spend a substantial fraction of the full optimization horizon before the final certificate detects the failure. 

The following corollary isolates this tradeoff between training and certification costs introduced by the budget $B$.

\begin{corollary}[U-shaped certification tradeoff]
\label{cor:budgeted-U}
For $0<B\le C_{\rm sw}^{\rm PG}$, define
\begin{equation}
C_{\rm B-independent}
:=
\mathcal R
\left[
\zeta C_{\rm sw}^{\rm PG}
+
C_{\rm bad}^{\rm PG}
\right]
+
\mathcal R c_{\rm cert}
\left[
\binom{\ell}{2}+\ell+2
\right].
\label{eq:budgeted-fixed-cost}
\end{equation}
Then we have
\begin{equation}
\Cost_B
\le
C_{\rm B-independent}
+
\mathcal R
\left[
\ell B
+
\tfrac{c_{\rm cert}C_{\rm sw}^{\rm PG}}{B}
\right].
\label{eq:budgeted-clean-U}
\end{equation}
The $B$-dependent upper bound is minimized at
$
B^\star
=
\sqrt{
\tfrac{c_{\rm cert}C_{\rm sw}^{\rm PG}}{\ell}
}$,
whenever $B^\star\le C_{\rm sw}^{\rm PG}$, with 
$
\Cost_{B^*}
\le
C_{\rm B-independent}
+2\mathcal R
\sqrt{
\ell c_{\rm cert}C_{\rm sw}^{\rm PG}
}$.
\end{corollary}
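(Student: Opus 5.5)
The plan is to specialize Theorem~\ref{thm:budgeted-cost} to the range $0<B\le C_{\rm sw}^{\rm PG}$ and bound each $B$-dependent piece crudely. In this range $\bar B=\min\{B,C_{\rm sw}^{\rm PG}\}=B$, and since each failed block permanently removes one oracle, $M\le\ell$. The training part of \eqref{eq:budgeted-cost-first} is therefore at most $\mathcal R[\zeta C_{\rm sw}^{\rm PG}+C_{\rm bad}^{\rm PG}]+\mathcal R\ell B$.

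For the certification part, I would drop the minimum with $K$ and keep the second argument, $\lceil C_{\rm sw}^{\rm PG}/B\rceil+Q+1$. Then I would use $\lceil x\rceil\le x+1$, Lemma~\ref{lem:block-counts} ($Q\le\binom{\ell}{2}$), and $M\le\ell$. This gives
\[
\mathcal R c_{\rm cert}\Bigl[\tfrac{C_{\rm sw}^{\rm PG}}{B}+\tbinom{\ell}{2}+\ell+2\Bigr].
\]
The constant $2$ comes from the ceiling slack plus the final block. Adding the two parts yields exactly $C_{\rm B-independent}+\mathcal R[\ell B+c_{\rm cert}C_{\rm sw}^{\rm PG}/B]$, which is \eqref{eq:budgeted-clean-U}.

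The minimization is elementary. The function $h(B)=\ell B+c_{\rm cert}C_{\rm sw}^{\rm PG}/B$ is convex on $(0,\infty)$. Setting $h'(B)=\ell-c_{\rm cert}C_{\rm sw}^{\rm PG}/B^2$ to zero gives $B^\star=\sqrt{c_{\rm cert}C_{\rm sw}^{\rm PG}/\ell}$, and by AM--GM, $h(B^\star)=2\sqrt{\ell c_{\rm cert}C_{\rm sw}^{\rm PG}}$. The hypothesis $B^\star\le C_{\rm sw}^{\rm PG}$ is needed only so that $B^\star$ lies in the range where the bound $\bar B=B$, and hence \eqref{eq:budgeted-clean-U}, applies. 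Substituting $B=B^\star$ gives the final estimate.

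I see no real obstacle. The only point needing care is checking that the additive constants in $C_{\rm B-independent}$ absorb the ceiling and the final block, which they do. The result also relies on the high-probability event of Proposition~\ref{prop:budgeted-correctness}, inherited from Theorem~\ref{thm:budgeted-cost}, and on the SGE stage-cost form required by Lemma~\ref{lem:block-counts}.
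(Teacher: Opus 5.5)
Your proposal is correct and matches the paper's proof. Like the paper, you specialize Theorem~\ref{thm:budgeted-cost} using $\bar B=B$, $\lceil C_{\rm sw}^{\rm PG}/B\rceil\le C_{\rm sw}^{\rm PG}/B+1$, $M\le\ell$ and $Q\le\binom{\ell}{2}$ from Lemma~\ref{lem:block-counts}, then minimize $\ell B+c_{\rm cert}C_{\rm sw}^{\rm PG}/B$; your accounting of the constant $2$ (ceiling slack plus the final block) and your remark that $B^\star\le C_{\rm sw}^{\rm PG}$ is needed only to stay in the range where $\bar B=B$ are both accurate.
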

\begin{proof}
For $B\le C_{\rm sw}^{\rm PG}$, we have $\bar B=B$ and
$
\lceil C_{\rm sw}^{\rm PG}/B\rceil
\le C_{\rm sw}^{\rm PG}/B+1.
$
Also recall the upper bound $M\leq \ell$ and $Q\leq \binom{\ell}{2}$ in Lemma~\ref{lem:block-counts}. 
 Applying these inequalities in \eqref{eq:budgeted-cost-first} yields \eqref{eq:budgeted-clean-U}. 
 By minimizing the two $B$-dependent terms, we can show $B^*$ and $\Cost_{B^*}$.
\end{proof}

Corollary~\ref{cor:budgeted-U} makes the certification-frequency tradeoff explicit. In Ineq.~\eqref{eq:budgeted-clean-U}, the term $\ell B$ increases with the certification budget because an unsuccessful oracle may consume more uncertified training before being rejected, whereas $c_{\rm cert}C_{\rm sw}^{\rm PG}/B$ decreases because larger blocks require fewer trusted certifications. Balancing these two terms yields a theoretically efficient choice of $B^*$. In real implementations, the quantities $c_{\rm cert}$ and $\ell$ are given explicitly, while the quantity $C_{\rm sw}^{\rm PG}$ depends on the desired accuracy and can be hard to estimate. However, since $C_{\rm sw}^{\rm PG}$ is a total-cost benchmark, we can heuristically replace its position in $B^*$ with a fraction of the total available budget. We provide clear evidence for this U-shaped certification tradeoff in our numerical experiments in Subsection~\ref{subsec:numerics_budgeted_certification}.


\section{Numerical experiments}\label{sec:numerics}
In this section, we use simulated and LLM-based experiments to illustrate three key implications of our framework: (i) the cost-minimizing oracle can change with the optimization scale, (ii) stage-dependent switching can be substantially cheaper than committing to a single oracle, and (iii) when oracle biases are unknown, there is a clean tradeoff between frequent certification and delayed detection of biased sources. 

\subsection{Simulated experiments - regression with noisy labels}
We first corroborate the switching mechanism in a strongly convex linear-regression problem with two unbiased stochastic oracles. We use $d=10$ Rademacher features $a\in\{-1,+1\}^d$ and responses $y=a^\top x^*+\varepsilon_\lambda$, where $\|x^*\|=1$ and the initial distance is $R_0=4$. Hence
\[
f(x)-f(x^*)=\tfrac12\|x-x^*\|^2,\quad L=\mu=1.
\]
Oracle 1 is inexpensive but has larger residual noise, whereas oracle 2 is more expensive and more accurate. Both oracles are unbiased in this experiment. We use mini-batches of size $100$, halve the squared radius after each successful stage, and average 25 independent repetitions.

The resulting behavior is shown in Figure~\ref{fig:synthetic-two-oracle}. At coarse accuracy, Algorithm~\ref{alg:known} selects SO1 because its lower price more than offsets its larger variance. As the radius decreases, the residual-variance term becomes dominant and the method switches to SO2. At the accuracy around $10^{-3}$, the adaptive policy costs about $2.7\times10^4$, compared with about $7.0\times10^4$ for always using oracle 2 and $8.5\times10^4$ for always using oracle 1. Thus, even in this simple setting, the most cost-efficient oracle depends on the current optimization scale, and adaptively switching between oracles can lead to substantial cost savings.

\begin{figure}[h]
\centering
\includegraphics[width=0.55\linewidth]{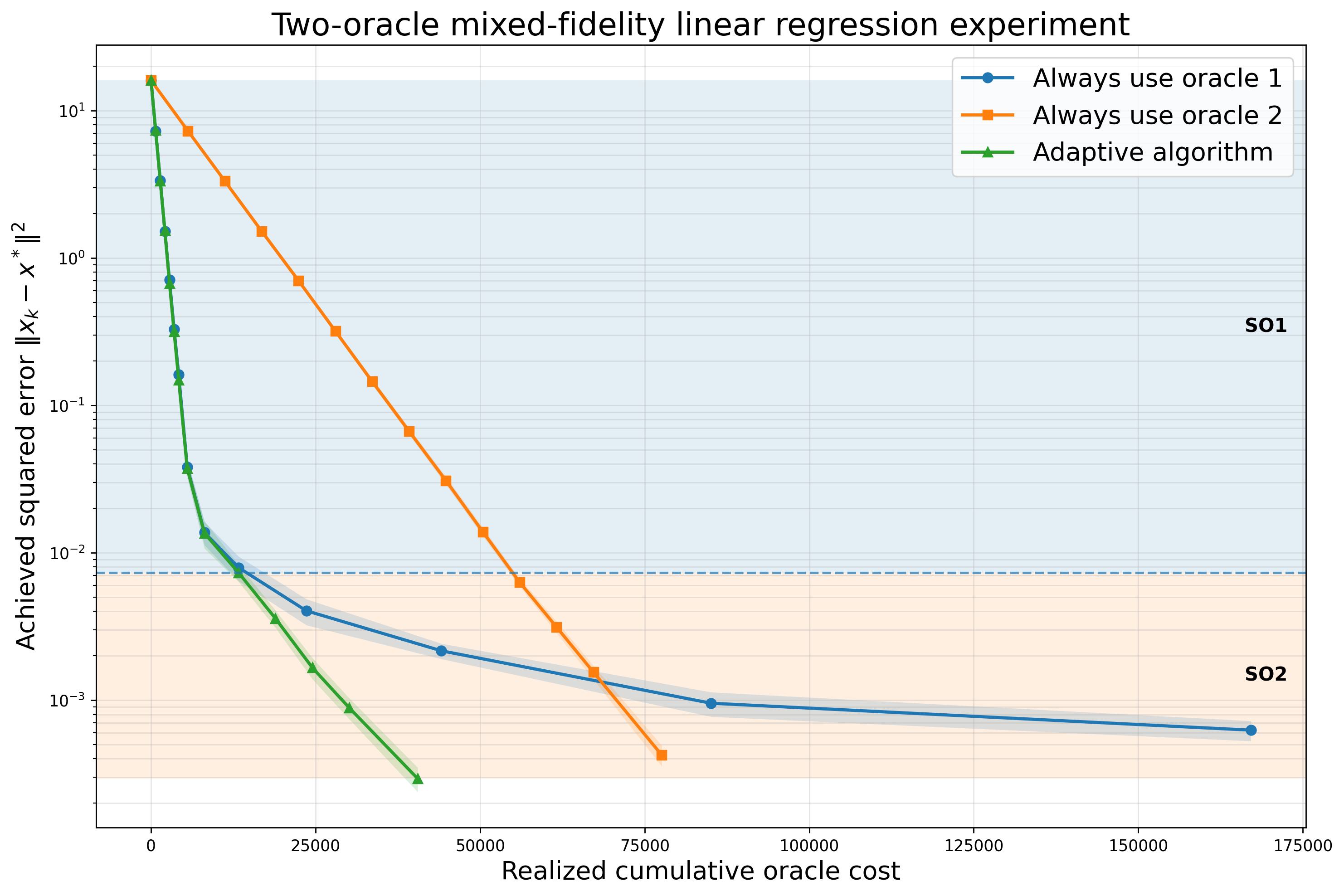}
\caption{Controlled two-oracle experiment. Curves show achieved squared error versus realized cumulative oracle cost; shaded intervals around the curves are pointwise 95\% confidence intervals. Filled regions report the selected oracles in different phases.}
\label{fig:synthetic-two-oracle}
\end{figure}

\subsection{LLM-completion experiments - known bias}\label{subsec_known_bias}
We next consider the \texttt{Aspiring Minds} data described in the introduction and the data completion objective defined in Subsection~\ref{subsec:application_missing_data}. The clean data defines the target linear-regression objective, while each model--prompt completion pipeline defines a stochastic oracle. We predict \texttt{ComputerProgramming} from 22 numerical applicant covariates. After adding an intercept, we whiten the design using the clean least-squares Hessian, which preserves predictions and objective values while giving
\[
\Sigma_{\rm clean}=I,\quad L=\mu=1,\quad
f_{\rm clean}(x)-f_{\rm clean}(x^*)=\tfrac12\|x-x^*\|^2.
\]

We use the clean Hessian for whitening only as an experimental normalization, so that distances, curvature, and oracle characteristics can be compared on a common scale across pipelines. The clean data and clean objective are not included among the candidate stochastic oracles and are never selected by the adaptive policy. In a deployment in which the original Hessian is unavailable, one could instead work in the original coordinates or estimate a preconditioner from available data.

The nine paid oracles correspond to four GPT-5.4 Nano prompt variants, GPT-5.6 Luna, Claude Haiku 4.5, Claude Sonnet 5, and two averaged ensembles. Table~\ref{tab:llm-motivation} reports the 10\% diagnostics. For the low-rank variants, ``LR-7'' and ``LR-10'' mean that the prompt receives 7 and 10 features as input, respectively. The exact prompt templates used in the completion experiments are listed in Appendix~\ref{app:llm-prompts}.

\paragraph{Cost interpretation.} 
For completed dataset $\lambda$, a sampled-row gradient is
\[
\cG_\lambda(x;j)=\widetilde \theta_{\lambda,j}(\widetilde \theta_{\lambda,j}^\top x-y_j).
\]
For ensemble oracles, we average component gradients on the same sampled row and sum their component costs. To compare optimization trajectories on a common scale, we divide the one-time full-dataset completion price by the number of rows to obtain an equivalent per-observation 
query price; a mini-batch of size $m$ is charged $m$ times this price. This is only an accounting device for comparing information costs.

Our goal in this experiment is to place heterogeneous information sources on a common cost scale, rather than to reproduce the billing mechanics of a particular deployment. The completion pipelines incur an upfront cost for producing datasets of different fidelities, whereas the optimization framework is expressed in terms of oracle-query costs. We therefore amortize each completion cost across observations and use the resulting per-row price to compare the information consumed along different optimization trajectories. In applications where acquiring an oracle instead incurs a fixed setup cost, or where completed data can be reused at zero marginal cost, the appropriate cost model would additionally include such activation or reuse costs; incorporating these costs is complementary to the fidelity-switching principle studied here.

As discussed in Subsection~\ref{subsec:application_missing_data}, the bias of the constructed operator admits the exact decomposition
\[
g_\lambda(x)-\nabla f_{\rm clean}(x)
=b_\lambda+(\Sigma_\lambda-\Sigma_{\rm clean})(x-x^*).
\]
We report $\widehat\cB_\lambda=\|b_\lambda\|=\|g_\lambda(x^*)\|$ and $\beta_\lambda=\|\Sigma_\lambda-\Sigma_{\rm clean}\|_2$; the stronger radius-$R$ bias bound is $\widehat\cB_\lambda+\beta_\lambda R$. The residual variance and state-dependent coefficient are computed directly from the completed dataset:
$
\sigma_{\lambda,*}^2
=\frac1n\sum_{j=1}^n
\|\cG_\lambda(x^*;j)-g_\lambda(x^*)\|^2,$ $
\cL_\lambda 
=4\lambda_{\max}\!\left(\frac1n\sum_{j=1}^nM_{\lambda,j}^\top M_{\lambda,j}\right),
$
where $M_{\lambda,j}=\widetilde \theta_{\lambda,j}\widetilde \theta_{\lambda,j}^\top-\Sigma_\lambda$. 


\paragraph{Practical admissibility rule.}
The theoretical admissibility condition for Algorithm~\ref{alg:known} in Section~3 is a sufficient worst-case condition and has conservative constants. In the LLM experiments, our goal is to examine the stage-dependent switching mechanism rather than to reproduce these worst-case constants. We therefore use the practical rule
\[
\widehat{B}_\lambda \le 0.5\mu R,
\]
where
$
\widehat{B}_\lambda
=
\|b_\lambda\|
=
\|g_\lambda(x^\ast)\|.
$
For the completion model, the full radius-dependent bias bound derived in Section~2.2 is $
\widehat{B}_\lambda + \beta_\lambda R;
$
thus, the rule above should be interpreted as a practical proxy for oracle fidelity rather than as an exact implementation of the sufficient theoretical condition. This choice affects the empirical switching thresholds but not any of our theoretical guarantees. In all reported experiments, we use mini-batches of size~20, at most 20 restart stages, and 50 independent repetitions.


\paragraph{Experiment results.}
Figure~\ref{fig:llm-10} shows the experiment with 10\% randomly masked entries. The adaptive sequence is
\[
\begin{aligned}
\text{Nano LR-short-7 (SO2)} &\longrightarrow \text{Nano LR-10 (SO4)}\\
&\longrightarrow \text{Luna (SO5)} \longrightarrow \text{Avg. Haiku+Sonnet (SO8)}.
\end{aligned}
\]
The adaptive method (Algorithm~\ref{alg:known}) skips several available pipelines, including both stand-alone Claude models at the tightest stage. At the finest displayed accuracy, its cumulative equivalent LLM cost is about $\$72$, compared with about $\$240$ for always using SO8, corresponding to a cost reduction of roughly 70\%. The experiment therefore exhibits the same qualitative mechanism as the simulation: inexpensive sources are preferred early, while additional fidelity becomes worthwhile only at finer optimization scales.

\begin{figure}[h]
\centering
\includegraphics[width=0.55\linewidth]{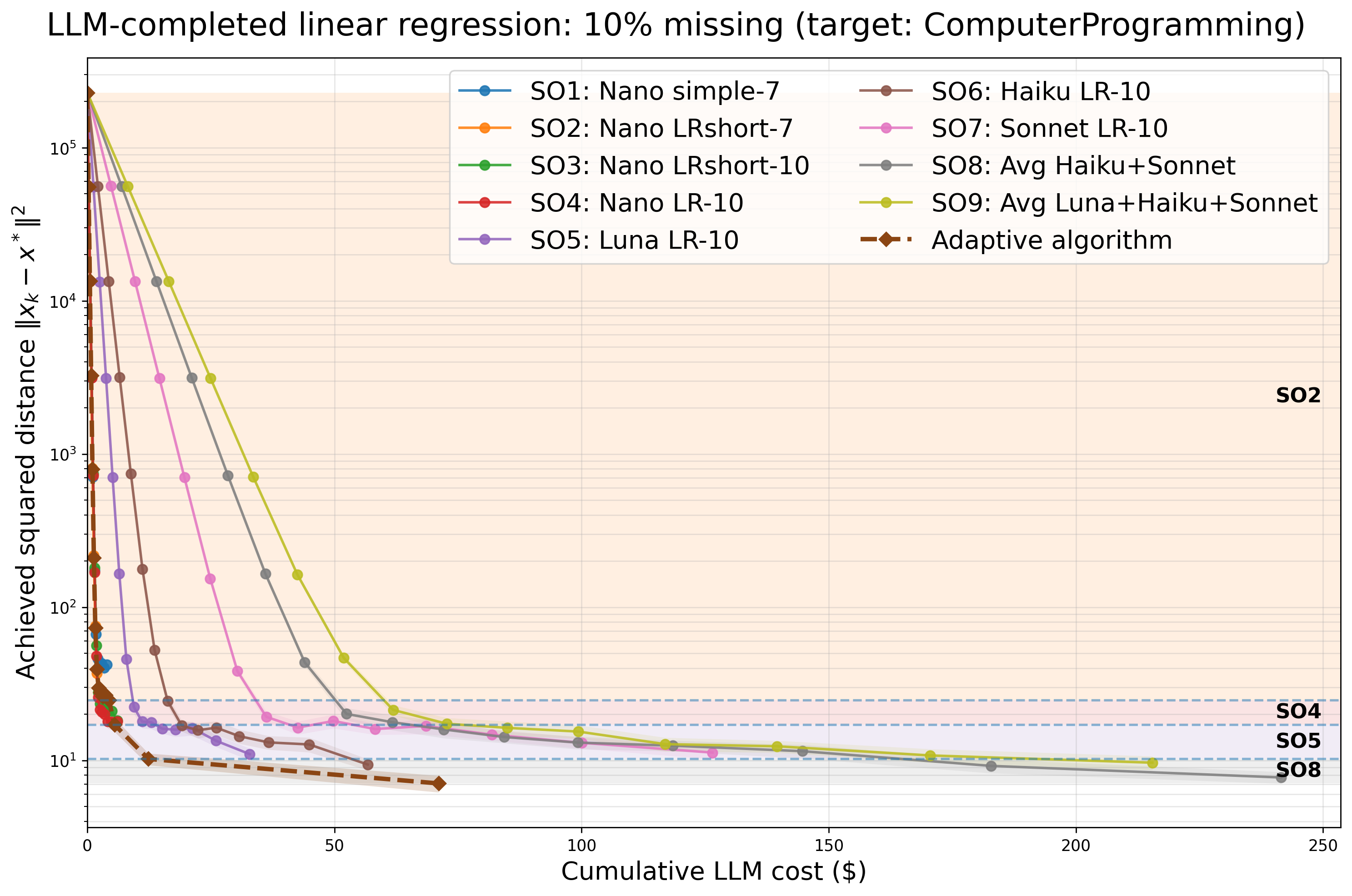}
\caption{LLM-completion experiment for Algorithm~\ref{alg:known} at 10\% missingness. Curves show achieved squared distance versus cumulative equivalent LLM cost, averaged over 50 repetitions; shaded intervals around the curves are pointwise 95\% confidence intervals. Filled regions report the selected oracles in different phases, and dashed lines mark switches.}
\label{fig:llm-10}
\end{figure}

Table~\ref{tab:llm-extra-params} and Figure~\ref{fig:llm-extra-missing} repeats the experiment at 20\% and 40\% missingness. The stage-dependent switching behavior persists, but the selected sequence changes because the completion-induced bias and variance depend on the amount of missing data. At 20\% missingness, the adaptive method uses only inexpensive Nano pipelines over the displayed trajectory, and achieves a cumulative cost comparable to that of the best fixed oracle, SO4.

At 40\% missingness, the adaptive method initially uses Nano LR-short-7 and later switches to Haiku as higher fidelity becomes necessary. At the finest displayed accuracy, its cumulative cost is approximately \$11.5, compared with approximately \$30 for the best fixed oracle, SO6, corresponding to a cost reduction of about 62\%.
These results further illustrate that the preferred oracle fidelity depends jointly on the downstream optimization stage and the quality of the upstream data.

\begin{figure}[h]
\centering
\begin{minipage}[t]{0.47\linewidth}
    \centering
    \includegraphics[width=\linewidth]{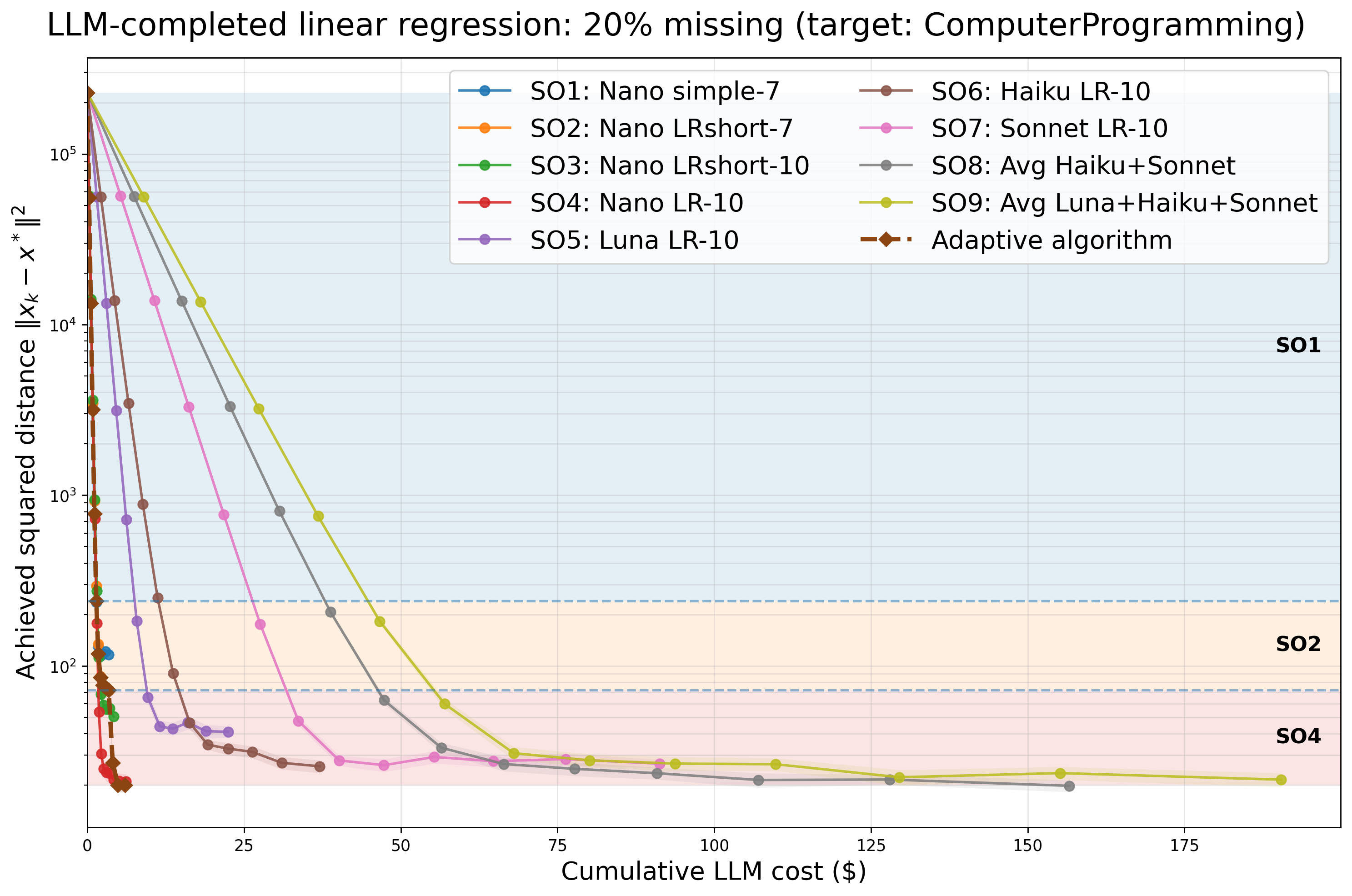}
    {(a) 20\% randomly masked entries.}
\end{minipage}
\hfill
\begin{minipage}[t]{0.47\linewidth}
    \centering
    \includegraphics[width=\linewidth]{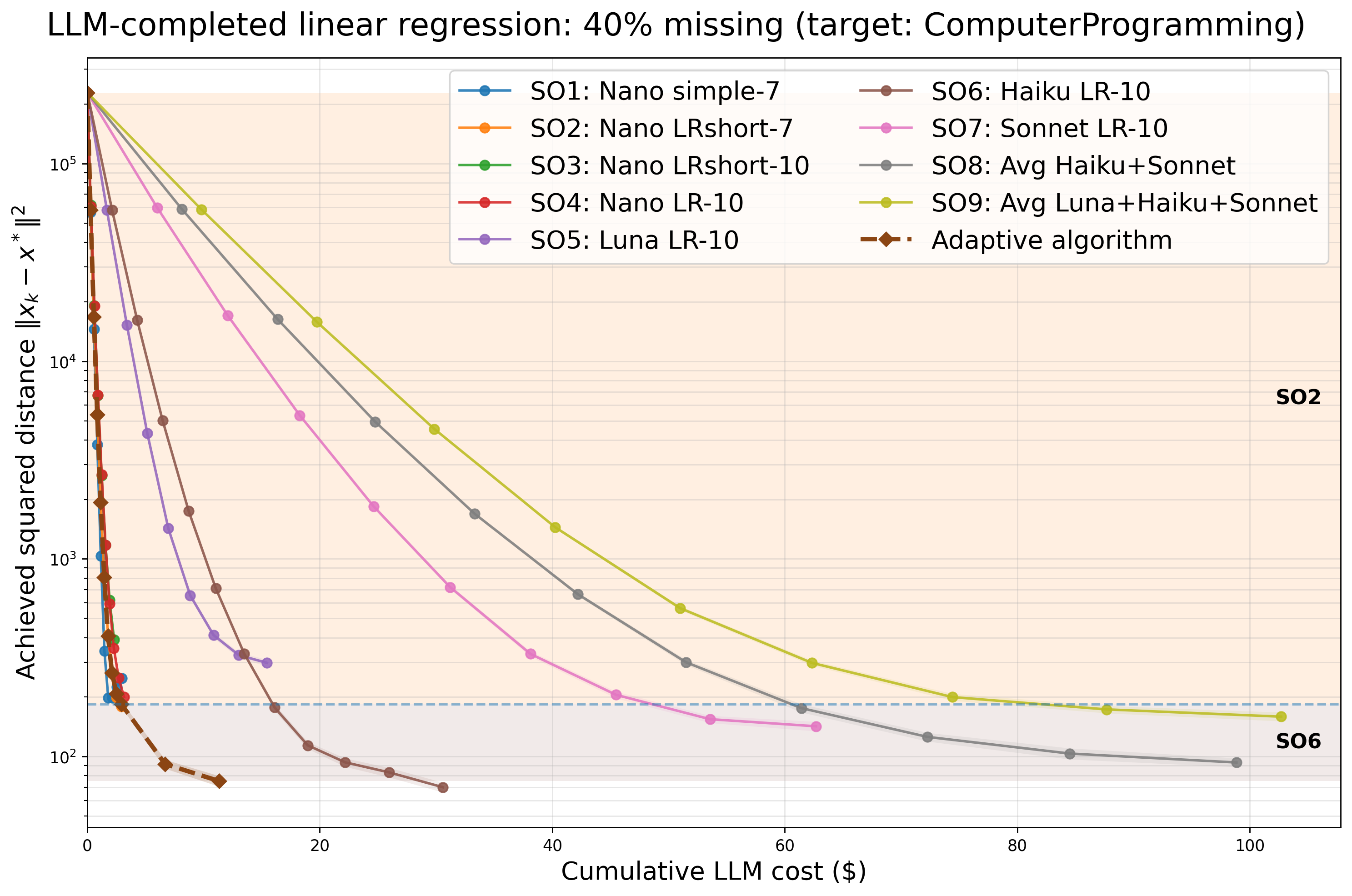}
    {(b) 40\% randomly masked entries.}
\end{minipage}
\caption{LLM-completion experiments for Algorithm~\ref{alg:known} under heavier missingness. Curves show achieved squared distance versus cumulative equivalent LLM cost over 50 repetitions, with pointwise 95\% confidence intervals. Filled regions report the selected oracles in different phases.}
\label{fig:llm-extra-missing}
\end{figure}

\begin{table}[h]
\centering
\caption{Oracle diagnostics for the 20\% and 40\% missingness experiments. Cost is the one-time full-dataset completion price; residual variance is scaled by $10^4$. The \textbf{bolded} rows represent the stochastic oracles selected along the trajectory of our adaptive algorithm.
}
\label{tab:llm-extra-params}
\footnotesize
\setlength{\tabcolsep}{3.0pt}
\begin{tabular}{cllrrrrr}
\toprule
Missing & $\lambda$ & Completion pipeline & Cost (\$) & Obj. & $\sigma_{\lambda,*}^2/10^4$ & $\cL_\lambda/10^3$ & $\widehat\cB_\lambda$\\
\midrule
20\% & - & Clean data (reference) & 0 & 3354 & 8.28 & 6.48 & 0 \\
 & \textbf{1}& \textbf{GPT-5.4 Nano, simple-7} & \textbf{0.180} & \textbf{4208} & \textbf{11.12} & \textbf{6.50} & \textbf{9.63} \\
 & \textbf{2}& \textbf{GPT-5.4 Nano, LR-short-7} & \textbf{0.181} & \textbf{4149} & \textbf{10.47} & \textbf{6.50} & \textbf{7.62} \\
 & 3& GPT-5.4 Nano, LR-short-10 & 0.189 & 4024 & 10.11 & 6.48 & 6.32 \\
 & \textbf{4}& \textbf{GPT-5.4 Nano, LR-10} & \textbf{0.192} & \textbf{4026} & \textbf{9.39} & \textbf{6.48} & \textbf{3.28} \\
 & 5& GPT-5.6 Luna, LR-10 & 0.993 & 4014 & 11.45 & 6.49 & 5.31 \\
 & 6& Claude Haiku 4.5, LR-10 & 1.423 & 3882 & 9.02 & 6.48 & 3.84 \\
 & 7& Claude Sonnet 5, LR-10 & 3.502 & 3879 & 9.05 & 6.48 & 4.18 \\
 & 8& Avg. Haiku+Sonnet, LR-10 & 4.925 & 3847 & 8.93 & 6.48 & 3.30 \\
 & 9& Avg. Luna+Haiku+Sonnet, LR-10 & 5.918 & 3846 & 9.34 & 6.48 & 3.59 \\
\midrule
40\%& - & Clean data (reference) & 0 & 3354 & 8.28 & 6.48 & 0 \\
 & 1& GPT-5.4 Nano, simple-7 & 0.185 & 4950 & 10.60 & 6.52 & 12.21 \\
 & \textbf{2}& \textbf{GPT-5.4 Nano, LR-short-7} & \textbf{0.183} & \textbf{4819} & \textbf{10.13} & \textbf{6.52} & \textbf{10.83} \\
 & 3& GPT-5.4 Nano, LR-short-10 & 0.195 & 5102 & 14.01 & 6.51 & 23.77 \\
 & 4& GPT-5.4 Nano, LR-10 & 0.197 & 4790 & 10.06 & 6.51 & 11.86 \\
 & 5& GPT-5.6 Luna, LR-10 & 1.104 & 5152 & 14.10 & 6.50 & 17.81 \\
 & \textbf{6}& \textbf{Claude Haiku 4.5, LR-10} & \textbf{1.402} & \textbf{4510} & \textbf{9.00} & \textbf{6.49} & \textbf{6.47} \\
 & 7& Claude Sonnet 5, LR-10 & 3.955 & 4701 & 9.27 & 6.49 & 10.78 \\
 & 8& Avg. Haiku+Sonnet, LR-10 & 5.357 & 4529 & 8.87 & 6.49 & 8.14 \\
 & 9& Avg. Luna+Haiku+Sonnet, LR-10 & 6.461 & 4602 & 9.51 & 6.49 & 10.96 \\
\bottomrule
\end{tabular}
\end{table}

\subsection{LLM-completion experiments - budgeted certification with unknown bias}\label{subsec:numerics_budgeted_certification}
We finally test the unknown-bias meta-algorithm in Section~\ref{sec:unknown_bias}. The algorithm does not use the bias diagnostics in its oracle-selection rule. Instead, it trains with one oracle over a homogeneous block and uses the clean gradient as the trusted certification module only at block endpoints. As shown above, whitening guarantees $L=\mu=1$, so the recursive-regularization depth is
\[
S=1+\left\lceil\log_2(L/\mu)\right\rceil=1.
\]
Therefore, the RR+SGE subroutine consists of only one SGE solve on the original objective, and we use $\widetilde T(R,\lambda)=T(R,\lambda)$ in the implementation. We assign each trusted-gradient evaluation the explicit certification price
$
c_{\rm cert}=\$ 1.0,
$
measured on the same scale as the stochastic-oracle training cost. We use $\mathcal R=3$ sequential block trials, and a block stops as soon as one candidate certifies successfully. 
We average 50 independent repetitions for each configuration and report the pointwise 95\% confidence intervals.

\paragraph{Certification-frequency tradeoff.}
Figure~\ref{fig:llm-budgeted-u-shape} varies the uncertified-training budget $B$ and reports the final cost required for Algorithm~\ref{alg:budgeted-certification} to reach the target accuracy. The blue curve represents the total cumulative cost, decomposed into stochastic-oracle training cost and trusted-certification cost. The experiments exhibit the tradeoff predicted by Corollary~\ref{cor:budgeted-U}. When $B$ is too small, the method certifies frequently and therefore pays a large certification cost. As $B$ increases, this certification cost decreases. Beyond an intermediate range, however, the training cost increases because a block can spend more on an oracle before a failed certificate causes that oracle to be rejected. 
\begin{figure}[h]
\centering
\begin{minipage}[t]{0.32\linewidth}
    \centering
    \includegraphics[width=\linewidth]{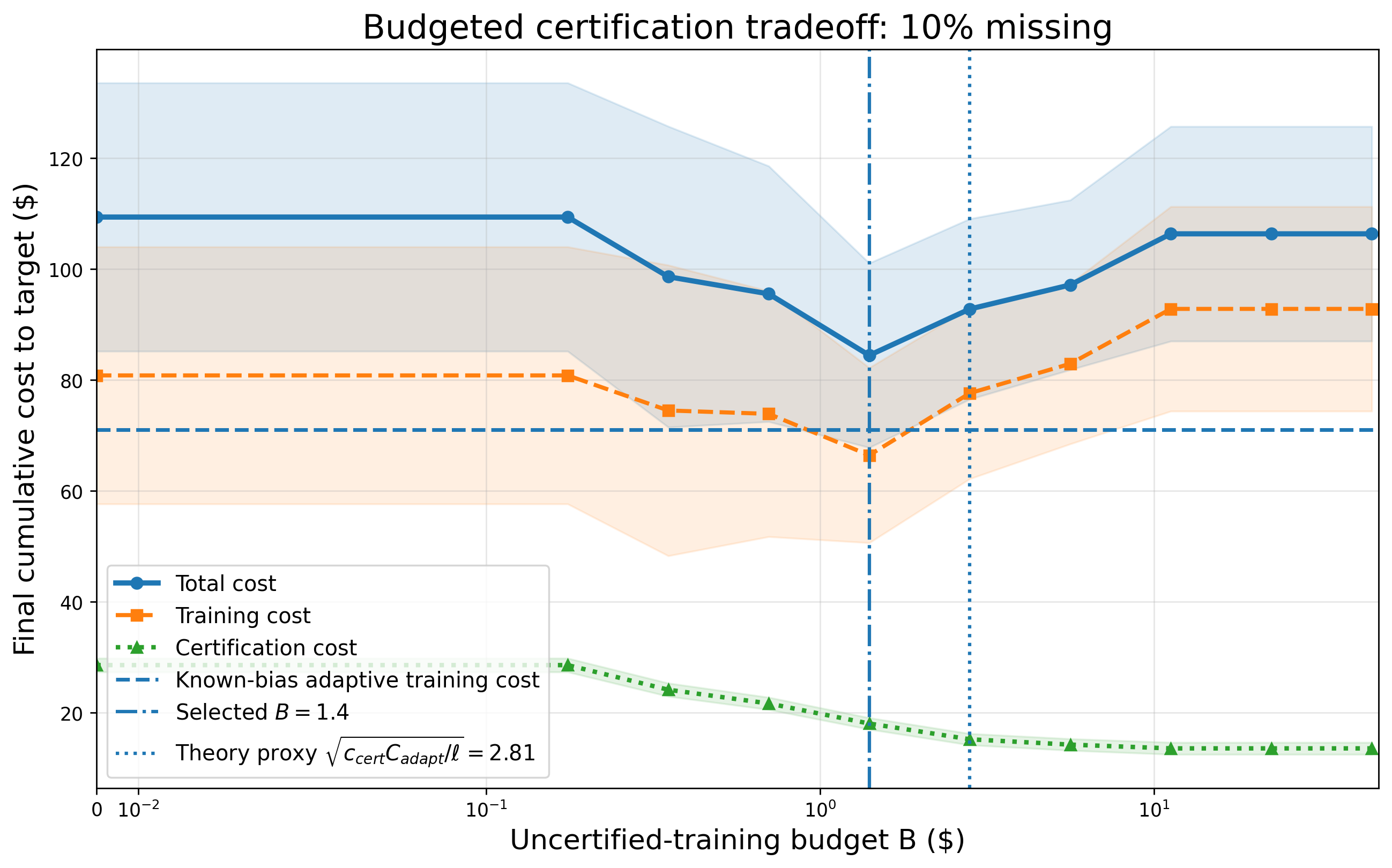}
    {(a) 10\% missingness.}
\end{minipage}
\hfill
\begin{minipage}[t]{0.32\linewidth}
    \centering
    \includegraphics[width=\linewidth]{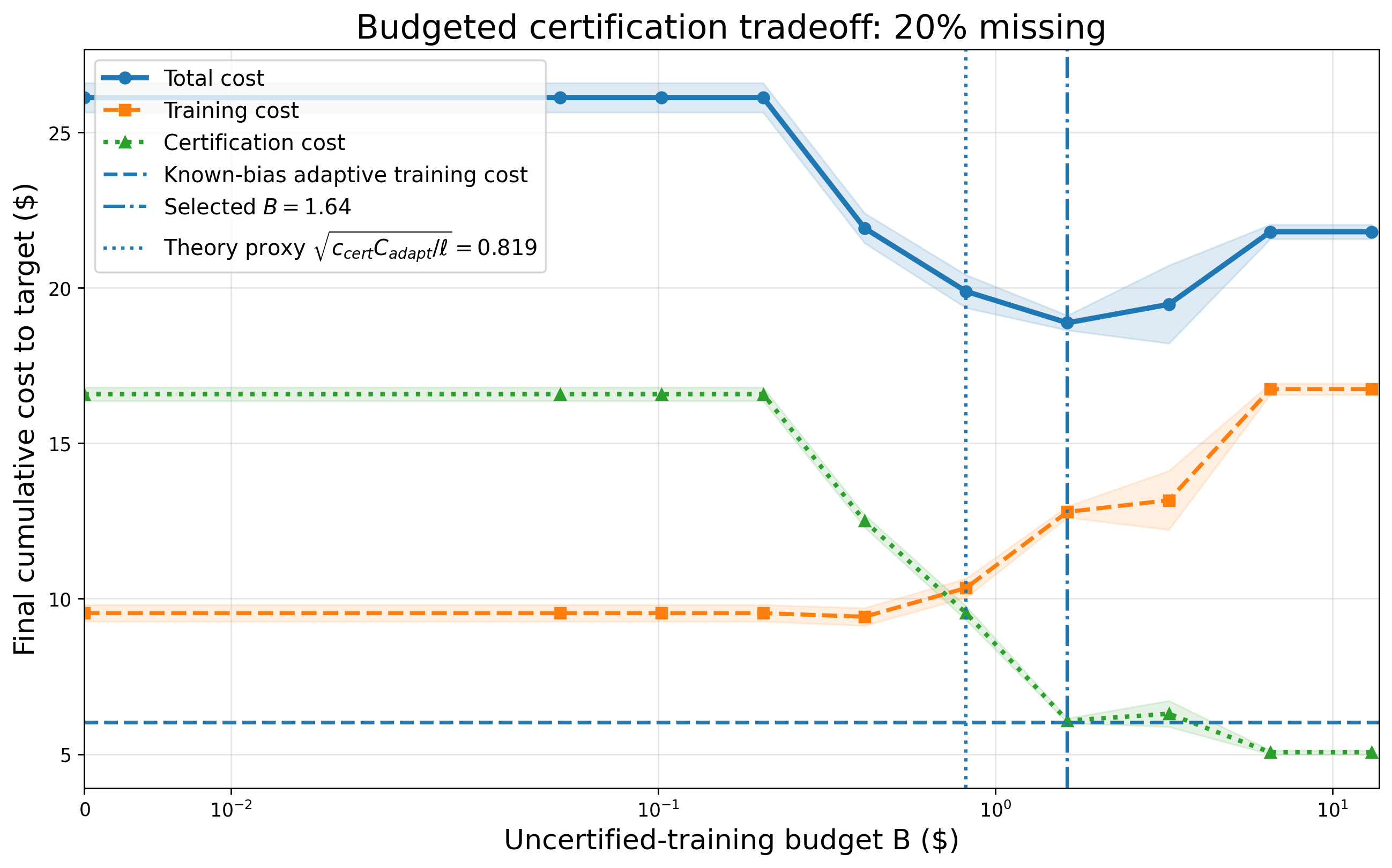}
    {(b) 20\% missingness.}
\end{minipage}
\hfill
\begin{minipage}[t]{0.32\linewidth}
    \centering
    \includegraphics[width=\linewidth]{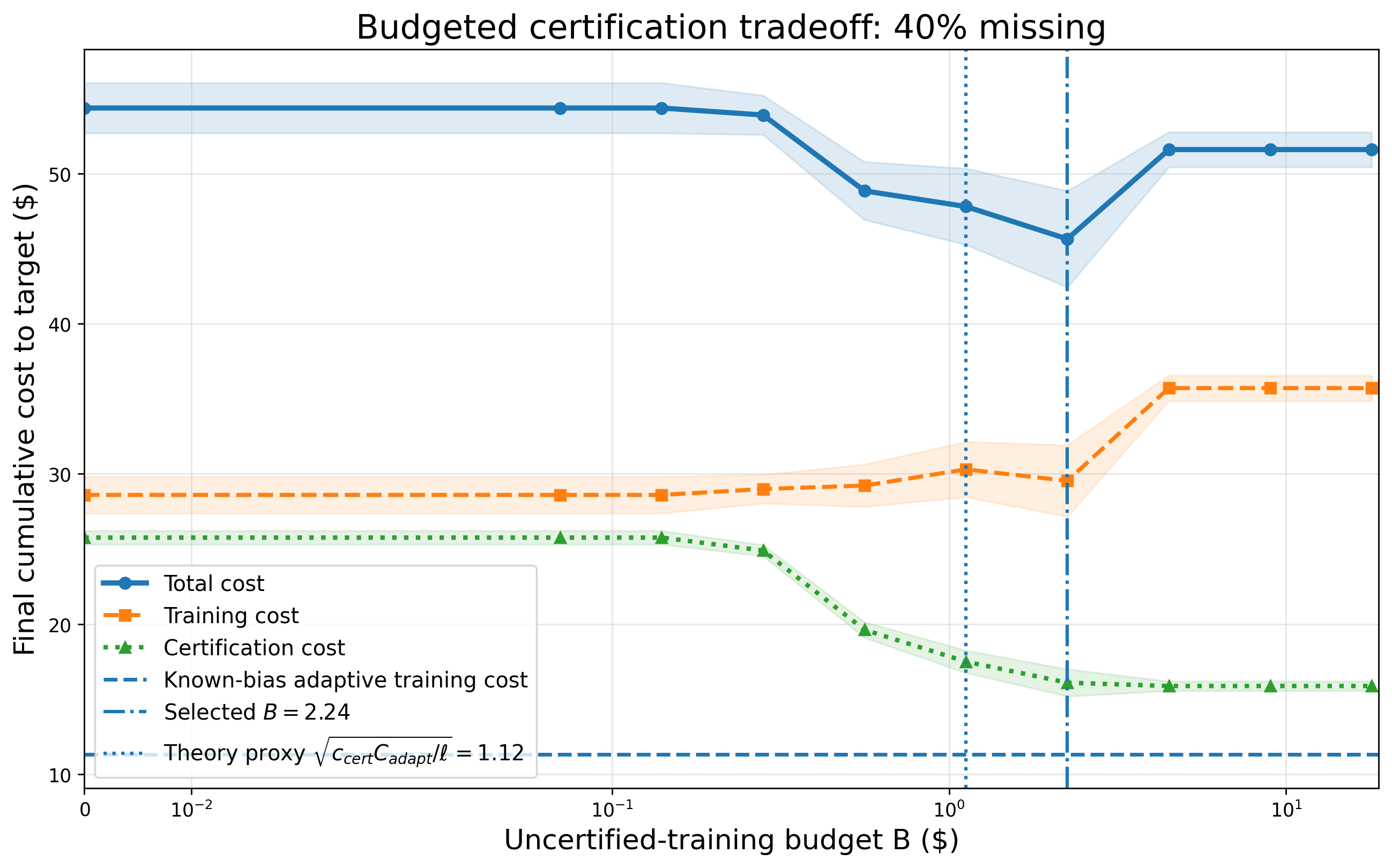}
    {(c) 40\% missingness.}
\end{minipage}
\caption{Certification-frequency tradeoff for Algorithm~\ref{alg:budgeted-certification}. The curves report the final cumulative total, training, and certification costs for different budget $B$, averaged over 50 repetitions, with pointwise 95\% confidence intervals. The horizontal dashed line is the training cost of the known-bias adaptive benchmark (Algorithm~\ref{alg:known}). The dash-dotted vertical line marks the best choice of $B$, and the dotted vertical line reports the theory-inspired proxy $\sqrt{c_{\rm cert}C_{\rm adapt}/\ell}$ in Corollary~\ref{cor:budgeted-U}.}
\label{fig:llm-budgeted-u-shape}
\end{figure}

For 10\%, 20\%, and 40\% missingness, the selected budgets are approximately $B=1.40$, $1.64$, and $2.24$, respectively. The corresponding theory-inspired proxies
$\sqrt{c_{\rm cert}C_{\rm adapt}/\ell}$ are approximately $2.81$, $0.82$, and $1.12$, where $C_{\rm adapt}$ is the realized training cost of the known-bias adaptive benchmark. Although this proxy is not intended to predict the exact empirical minimizer of $B$, it identifies the correct scale: in each case the selected budget is within a factor of two of the proxy, demonstrating the practical value of the theoretical backbone in Corollary~\ref{cor:budgeted-U}. 

Figure~\ref{fig:llm-budgeted-cost-decomposition} reports how training and certification costs accumulate across certified levels. 
The decomposition makes the price of unknown bias explicit. At coarse scales, one certificate validates several inexpensive training stages and the certification cost grows slowly. Near the finest scales, unsuccessful trials and higher-fidelity training cause both components to increase more sharply. 

\begin{figure}[h]
\centering
\begin{minipage}[t]{0.32\linewidth}
    \centering
    \includegraphics[width=\linewidth]{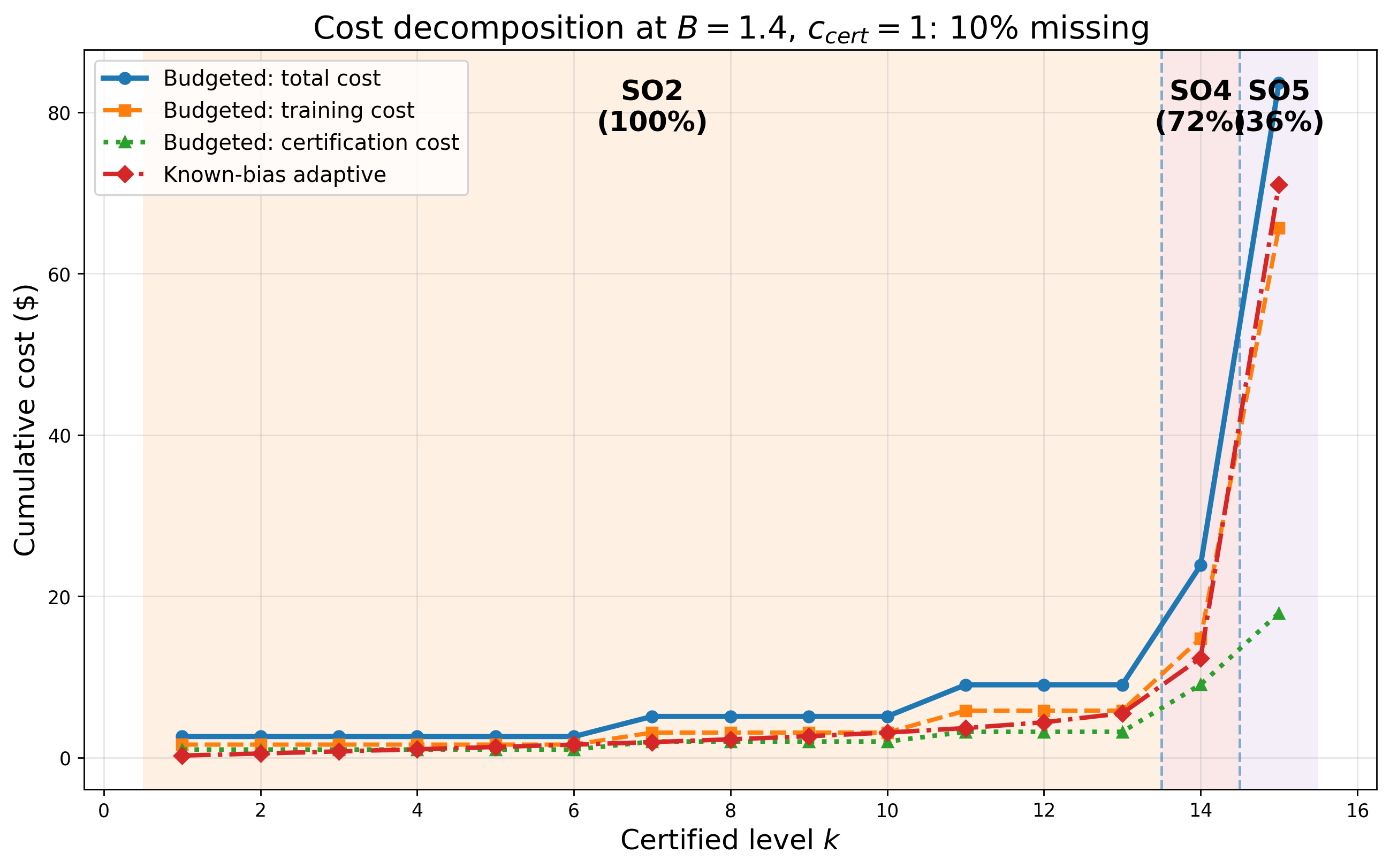}
    {(a) 10\% missingness, $B=1.40$.}
\end{minipage}
\hfill
\begin{minipage}[t]{0.32\linewidth}
    \centering
    \includegraphics[width=\linewidth]{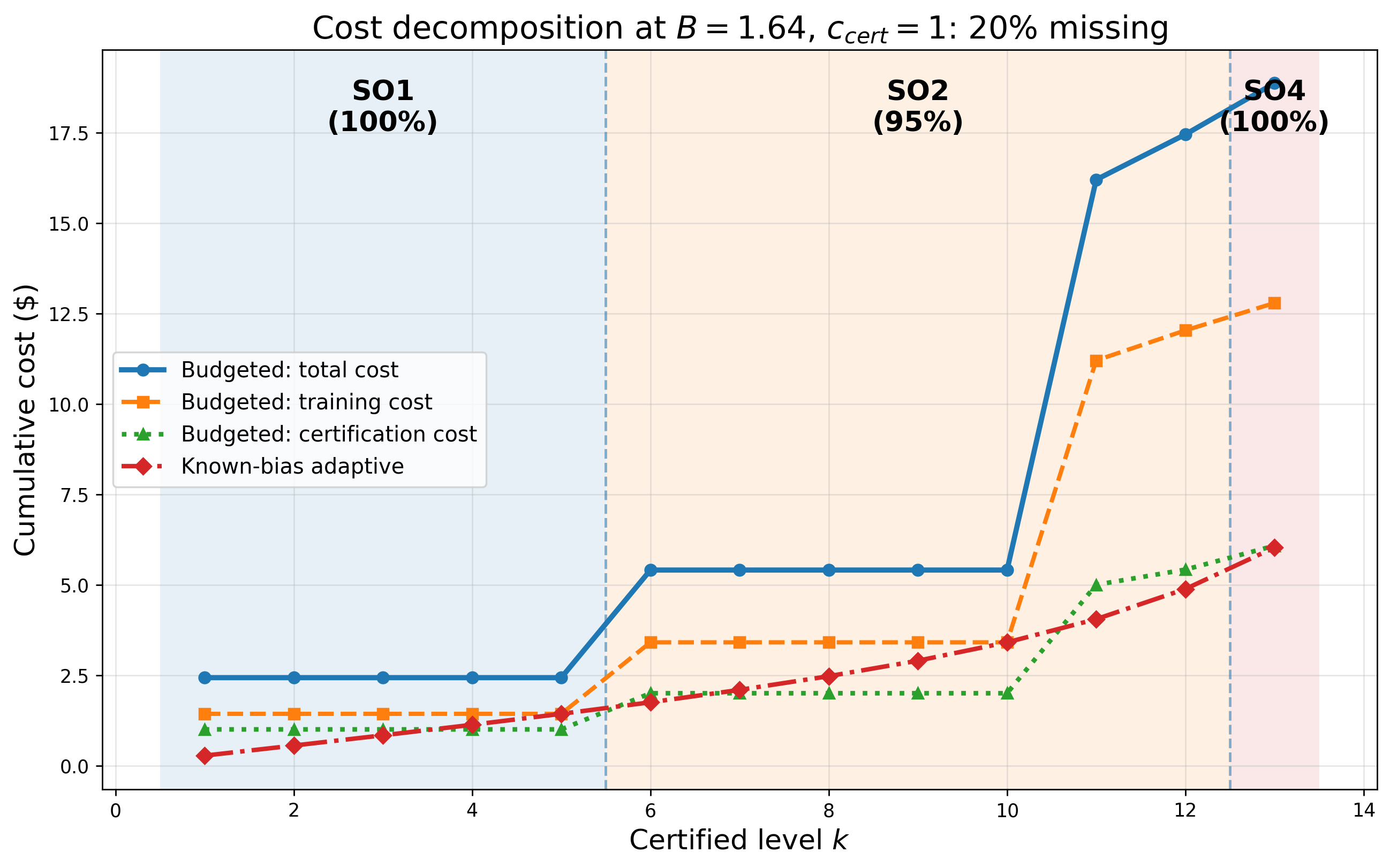}
    {(b) 20\% missingness, $B=1.64$.}
\end{minipage}
\hfill
\begin{minipage}[t]{0.32\linewidth}
    \centering
    \includegraphics[width=\linewidth]{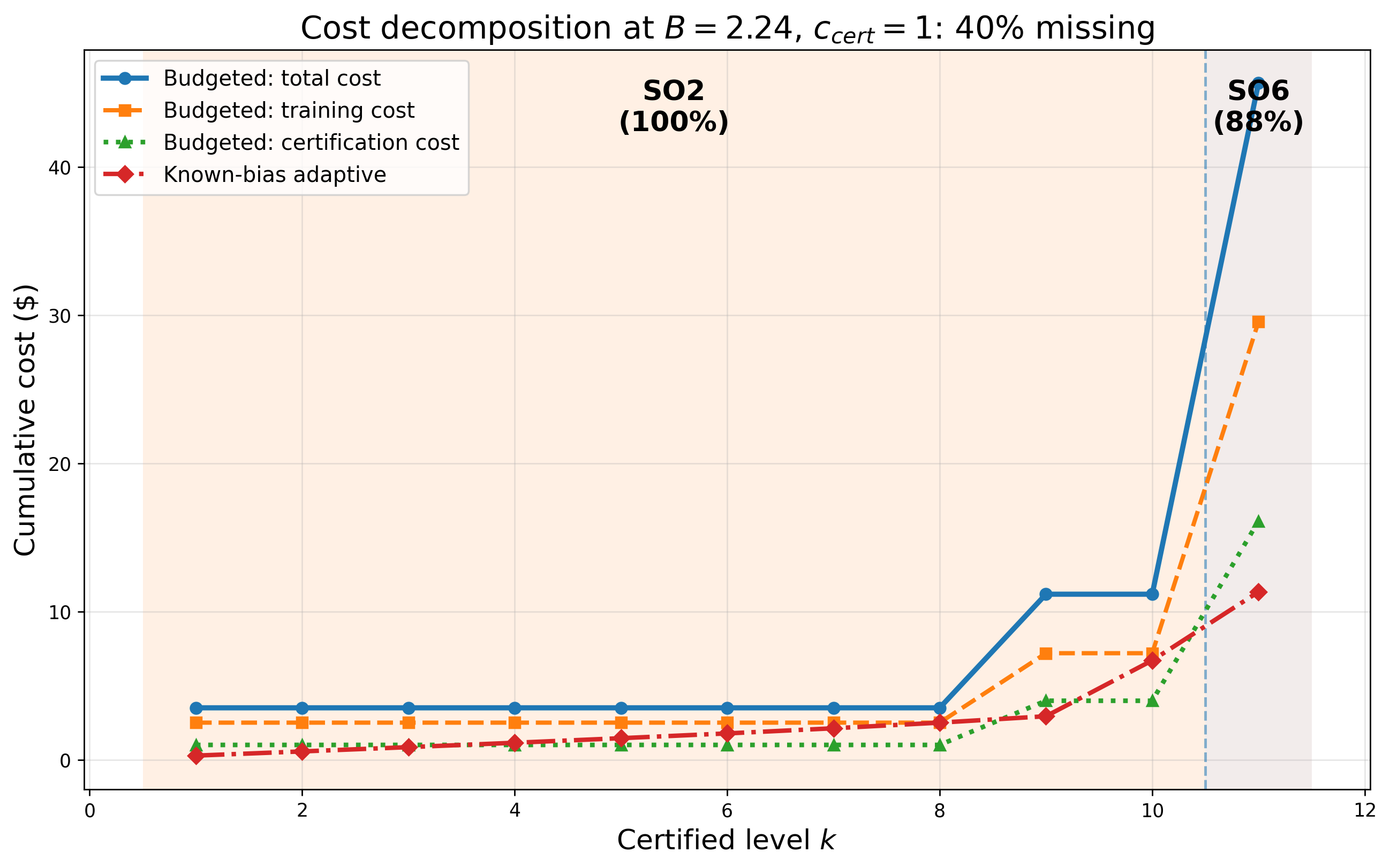}
    {(c) 40\% missingness, $B=2.24$.}
\end{minipage}
\caption{Cumulative cost decomposition for budgeted certification at the selected values of $B$. Blue curves report total cost, orange curves report stochastic-oracle training cost, and green curves report trusted-certification cost; the red curve is the known-bias adaptive benchmark. Filled regions indicate mostly selected oracles in Algorithm~\ref{alg:budgeted-certification} and the percentage of repetitions that select them.}
\label{fig:llm-budgeted-cost-decomposition}
\end{figure}

Figure~\ref{fig:llm-budgeted-vs-known} compares the optimization progress of Algorithm~\ref{alg:budgeted-certification} with Algorithm~\ref{alg:known}. 
As expected, the known-bias policy is more cost-efficient, but the unknown-bias method recovers the same qualitative coarse-to-fine fidelity pattern without access to these bias parameters. At 10\% missingness, the two trajectories remain relatively close until the finest stages, after which the unknown-bias method pays for additional certification and failed attempts. For 20\% and 40\% missingness, the final switch to a sufficiently accurate oracle requires more failed trials and certification cost.

\begin{figure}[h]
\centering
\begin{minipage}[t]{0.32\linewidth}
    \centering
    \includegraphics[width=\linewidth]{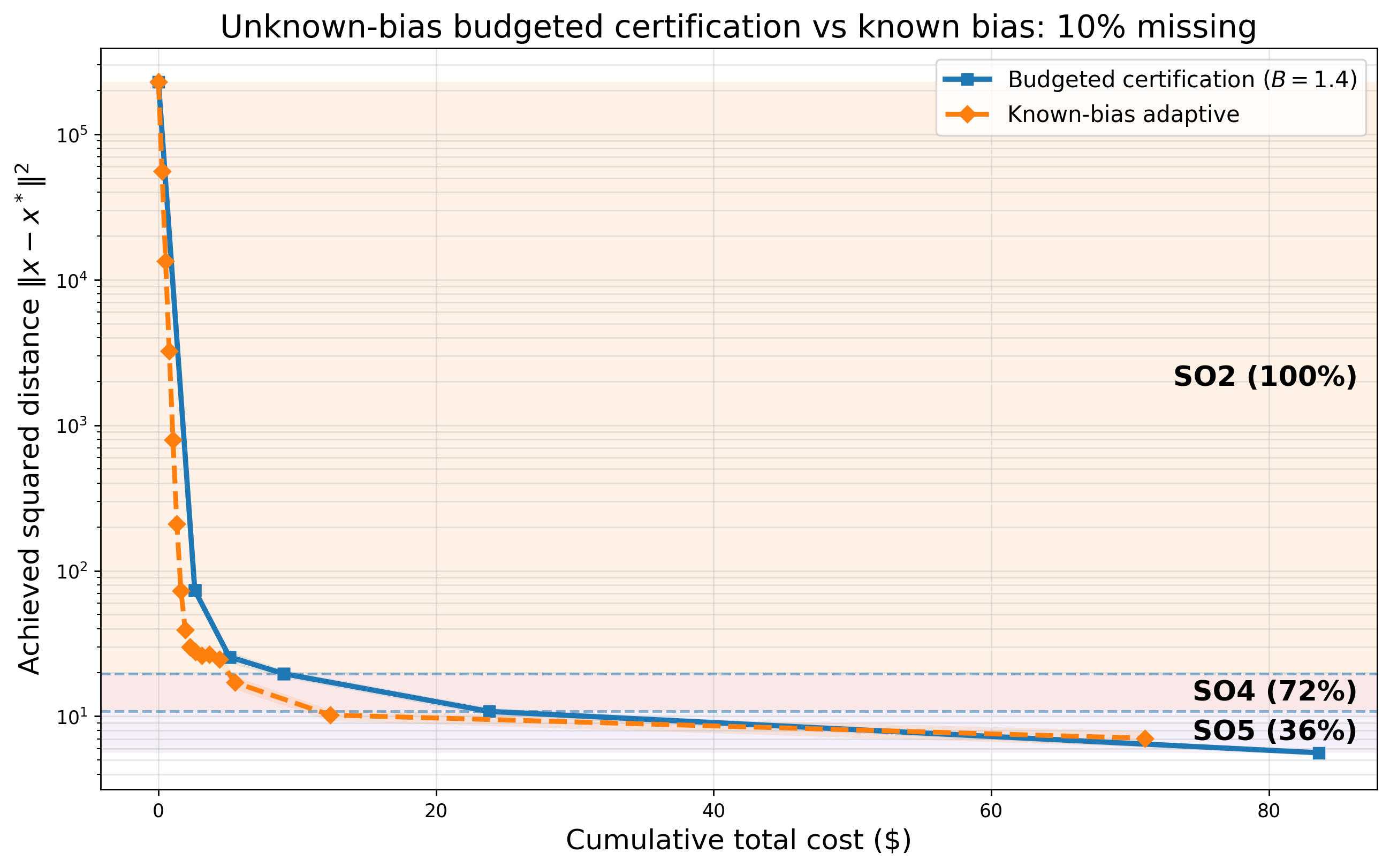}
    {(a) 10\% missingness, $B=1.40$.}
\end{minipage}
\hfill
\begin{minipage}[t]{0.32\linewidth}
    \centering
    \includegraphics[width=\linewidth]{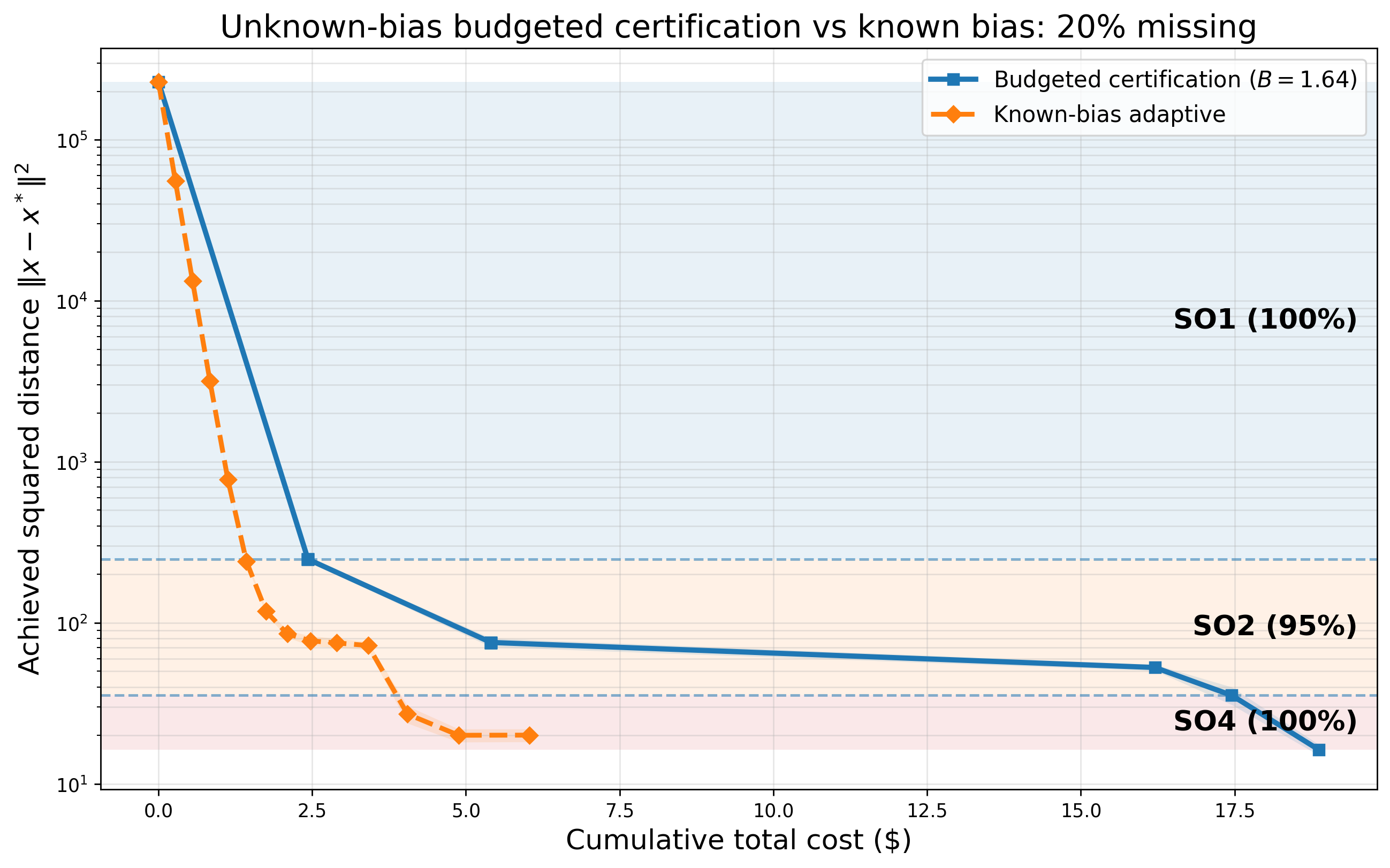}
    {(b) 20\% missingness, $B=1.64$.}
\end{minipage}
\hfill
\begin{minipage}[t]{0.32\linewidth}
    \centering
    \includegraphics[width=\linewidth]{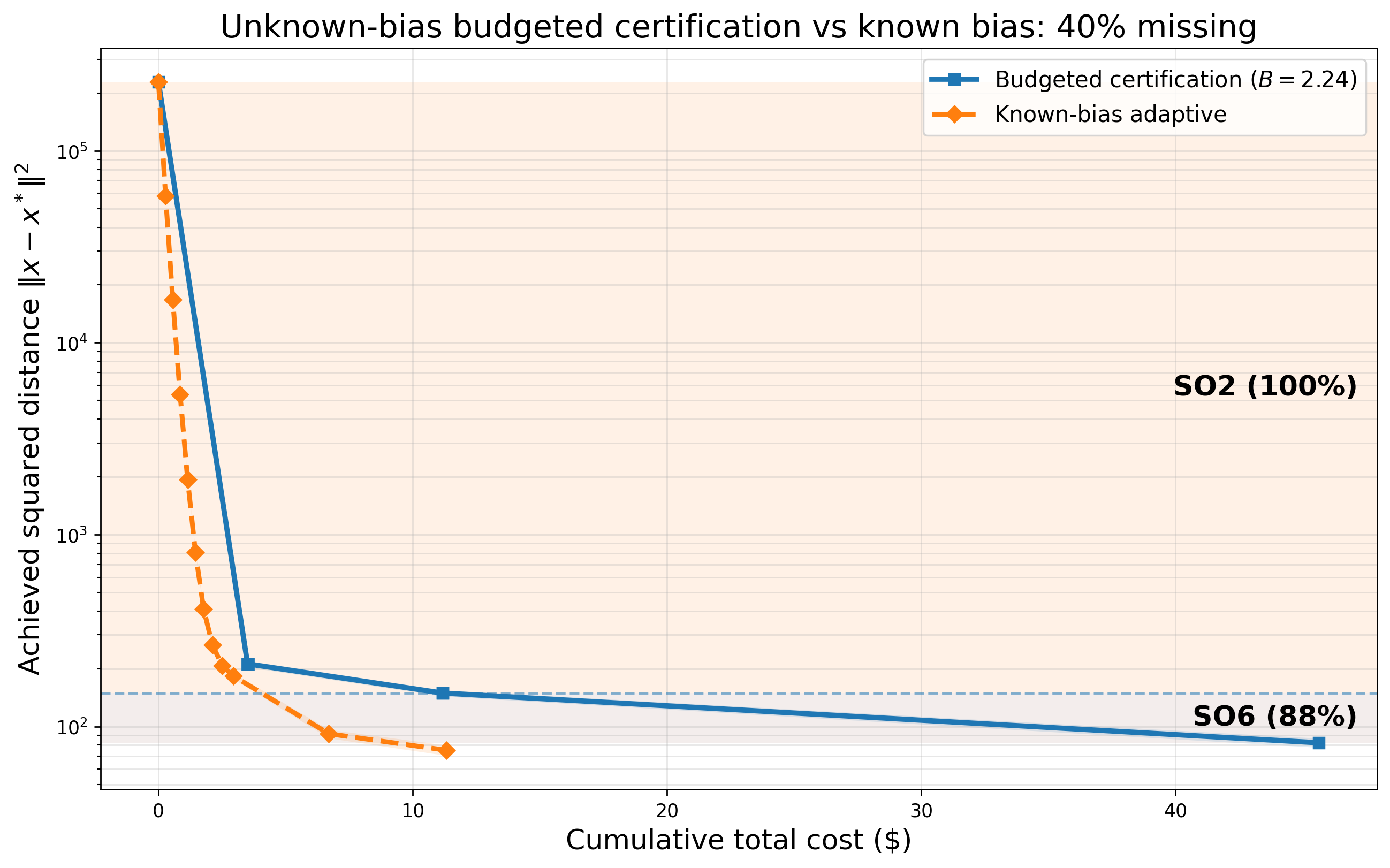}
    {(c) 40\% missingness, $B=2.24$.}
\end{minipage}
\caption{Unknown-bias budgeted certification v.s. the known-bias adaptive policy. Curves report achieved squared distance to $x^*$ versus cumulative total cost, including both stochastic-oracle training and trusted-gradient evaluations. Filled regions indicate the stochastic oracles mostly selected by the budgeted method and the percentage of repetitions that select them.}
\label{fig:llm-budgeted-vs-known}
\end{figure}

Taken together, this set of experiments provides a proof of concept for Section~\ref{sec:unknown_bias}. First, trusted certification removes the need for prior knowledge of the bias parameters and detects when a low-cost oracle can no longer deliver the required progress. Second, the budget $B$ controls how much training cost is incurred between successive certifications. The empirical results reveal a clear tradeoff between training and certification costs, showing that the certification frequency is itself an economically meaningful algorithmic decision.

Lastly, we would like to comment that the regression experiments in Subsections~\ref{subsec_known_bias}--\ref{subsec:numerics_budgeted_certification} are intentionally chosen as a controlled setting in which the clean downstream objective and the mixed-fidelity oracle parameters can be measured directly. Our purpose is not to claim that regression exhausts LLM-assisted decision pipelines, but to test the mechanism predicted by the theory while holding the downstream optimization problem fixed. The same framework applies whenever alternative data-generation procedures induce stochastic first-order information of different cost and fidelity, including downstream estimation, prediction-and-optimization, and decision-support pipelines.

\section{Concluding remarks}\label{sec:conclusion}
The central message is that the optimal use of mixed-fidelity information depends critically on the stage of downstream optimization. A cheap source can be the economically correct choice when the optimizer is far from the solution, even if its fidelity would be unacceptable at the final accuracy. Conversely, high fidelity is worth paying for only when it enables a more cost-efficient algorithmic progress or a lower attainable bias floor. The resulting break-even prices connect the optimization analysis to classical ideas on the value of information:
information should be purchased according to its decision value, not its intrinsic accuracy alone \citep{howard1966information,sims2003rational}.

This perspective also suggests a complementary way to benchmark LLMs. General evaluation efforts such as HELM and Chatbot Arena provide valuable standardized comparisons of model capabilities and human preferences; see, e.g., \citet{bommasani2023helm,chiang2024arena}. For optimization-driven applications, however, a single global rank may not identify the economically best model. A useful task-specific ``oracle card'' could report not only benchmark accuracy and token price, but also downstream quantities such as bias, residual variance, and state-dependent variance on a representative calibration set. For data labeling, these quantities can be estimated using repeated or trusted labels; for data completion, they can be estimated from a held-out fully observed subset. Such reporting would make it possible to translate model improvements directly into stage-dependent willingness-to-pay curves such as \eqref{eq:breakeven-price}.

Although LLM-assisted data curation motivates this work, the framework is broader. The same mixed-fidelity structure arises when stochastic first-order information is produced by human experts of different skill levels, crowd workers, sensors of different precision, approximate simulators, laboratory measurements, or computational solvers with different tolerances. Interesting future directions include learning oracle parameters online rather than from calibration data and allowing correlated errors across different data sources. 

Our analysis can be used in general for any portfolio of stochastic oracles. For LLM-based oracles, we abstract away several deployment-specific features. In particular, oracle acquisition may involve fixed setup costs in addition to marginal query costs; trusted certification may itself be estimated from finite samples; and practical implementations may estimate fidelity parameters only approximately. The empirical cost normalization and admissibility rules used in Section 6 are intended to expose the stage-dependent switching mechanism rather than to model every feature of a particular API deployment. Extending the framework to jointly learn oracle quality, acquisition cost, and certification uncertainty is an interesting direction for future work.

\bibliographystyle{abbrvnat}
\bibliography{main_arXiv}

\begin{thebibliography}{57}
\providecommand{\natexlab}[1]{#1}
\providecommand{\url}[1]{\texttt{#1}}
\expandafter\ifx\csname urlstyle\endcsname\relax
  \providecommand{\doi}[1]{doi: #1}\else
  \providecommand{\doi}{doi: \begingroup \urlstyle{rm}\Url}\fi

\bibitem[Acemoglu et~al.(2022)Acemoglu, Makhdoumi, Malekian, and
  Ozdaglar]{acemoglu2022data}
D.~Acemoglu, A.~Makhdoumi, A.~Malekian, and A.~Ozdaglar.
\newblock Too much data: Prices and inefficiencies in data markets.
\newblock \emph{American Economic Journal: Microeconomics}, 14\penalty0
  (4):\penalty0 218--256, 2022.
\newblock \doi{10.1257/mic.20200200}.

\bibitem[Ajalloeian and Stich(2020)]{ajalloeian2020biased}
A.~Ajalloeian and S.~U. Stich.
\newblock On the convergence of {SGD} with biased gradients.
\newblock \emph{arXiv preprint arXiv:2008.00051}, 2020.

\bibitem[Allen-Zhu(2018)]{allen2018gradient}
Z.~Allen-Zhu.
\newblock How to make the gradients small stochastically: Even faster convex
  and nonconvex sgd.
\newblock \emph{Advances in Neural Information Processing Systems}, 31, 2018.

\bibitem[Angelopoulos et~al.(2023)Angelopoulos, Bates, Jordan, and
  Malik]{angelopoulos2023ppi}
A.~N. Angelopoulos, S.~Bates, M.~I. Jordan, and J.~Malik.
\newblock Prediction-powered inference.
\newblock \emph{Science}, 382\penalty0 (6671):\penalty0 669--674, 2023.

\bibitem[Asi et~al.(2021)Asi, Carmon, Jambulapati, Jin, and
  Sidford]{asi2021stochastic}
H.~Asi, Y.~Carmon, A.~Jambulapati, Y.~Jin, and A.~Sidford.
\newblock Stochastic bias-reduced gradient methods.
\newblock In \emph{Advances in Neural Information Processing Systems},
  volume~34, pages 10810--10822, 2021.

\bibitem[Aybat et~al.(2019)Aybat, Fallah, Gurbuzbalaban, and
  Ozdaglar]{aybat2019universally}
N.~S. Aybat, A.~Fallah, M.~Gurbuzbalaban, and A.~Ozdaglar.
\newblock A universally optimal multistage accelerated stochastic gradient
  method.
\newblock \emph{Advances in neural information processing systems}, 32, 2019.

\bibitem[Beatson and Adams(2019)]{beatson2019efficient}
A.~Beatson and R.~P. Adams.
\newblock Efficient optimization of loops and limits with randomized
  telescoping sums.
\newblock In \emph{Proceedings of the 36th International Conference on Machine
  Learning}, volume~97 of \emph{Proceedings of Machine Learning Research},
  pages 534--543, 2019.

\bibitem[Belloni et~al.(2026)Belloni, Chen, and Wei]{belloni2026online}
A.~Belloni, Y.~Chen, and Y.~Wei.
\newblock Online pandora's box for contextual llm cascading.
\newblock \emph{arXiv preprint arXiv:2606.07392}, 2026.

\bibitem[Blanchet et~al.(2017)Blanchet, Goldfarb, Iyengar, Li, and
  Zhou]{blanchet2017unbiased}
J.~Blanchet, D.~Goldfarb, G.~Iyengar, F.~Li, and C.~Zhou.
\newblock Unbiased simulation for optimizing stochastic function compositions.
\newblock \emph{arXiv preprint arXiv:1711.07564}, 2017.

\bibitem[Blanchet and Glynn(2015)]{blanchet2015unbiased}
J.~H. Blanchet and P.~W. Glynn.
\newblock Unbiased monte carlo for optimization and functions of expectations
  via multi-level randomization.
\newblock In \emph{2015 Winter Simulation Conference}, pages 3656--3667. IEEE,
  2015.

\bibitem[Bommasani et~al.(2023)Bommasani, Liang, and Lee]{bommasani2023helm}
R.~Bommasani, P.~Liang, and T.~Lee.
\newblock Holistic evaluation of language models.
\newblock \emph{Annals of the New York Academy of Sciences}, 1525\penalty0
  (1):\penalty0 140--146, 2023.
\newblock \doi{10.1111/nyas.15007}.

\bibitem[Bottou et~al.(2018)Bottou, Curtis, and
  Nocedal]{bottou2018optimization}
L.~Bottou, F.~E. Curtis, and J.~Nocedal.
\newblock Optimization methods for large-scale machine learning.
\newblock \emph{SIAM Review}, 60\penalty0 (2):\penalty0 223--311, 2018.
\newblock \doi{10.1137/16M1080173}.

\bibitem[Chen et~al.(2018)Chen, Li, and Liu]{chen2018online}
K.~Chen, Q.~Li, and J.-G. Liu.
\newblock Online learning in optical tomography: A stochastic approach.
\newblock \emph{Inverse Problems}, 34\penalty0 (7):\penalty0 075010, 2018.
\newblock \doi{10.1088/1361-6420/aac220}.

\bibitem[Chen et~al.(2024)Chen, Zaharia, and Zou]{chen2024frugalgpt}
L.~Chen, M.~Zaharia, and J.~Zou.
\newblock Frugalgpt: How to use large language models while reducing cost and
  improving performance.
\newblock \emph{Transactions on Machine Learning Research}, 2024.

\bibitem[Chiang et~al.(2024)Chiang, Zheng, Sheng, Angelopoulos, Li, Li, Zhu,
  Zhang, Jordan, Gonzalez, and Stoica]{chiang2024arena}
W.-L. Chiang, L.~Zheng, Y.~Sheng, A.~N. Angelopoulos, T.~Li, D.~Li, B.~Zhu,
  H.~Zhang, M.~I. Jordan, J.~E. Gonzalez, and I.~Stoica.
\newblock Chatbot arena: An open platform for evaluating llms by human
  preference.
\newblock In \emph{Proceedings of the 41st International Conference on Machine
  Learning}, volume 235 of \emph{Proceedings of Machine Learning Research},
  pages 8359--8388, 2024.

\bibitem[Daras et~al.(2025)Daras, Cherapanamjeri, and
  Daskalakis]{daras2025ambient}
G.~Daras, Y.~Cherapanamjeri, and C.~Daskalakis.
\newblock How much is a noisy image worth? data scaling laws for ambient
  diffusion.
\newblock In \emph{International Conference on Learning Representations}, 2025.

\bibitem[Dereich and M{\"u}ller-Gronbach(2019)]{dereich2019general}
S.~Dereich and T.~M{\"u}ller-Gronbach.
\newblock General multilevel adaptations for stochastic approximation
  algorithms of robbins--monro and polyak--ruppert type.
\newblock \emph{Numerische Mathematik}, 142\penalty0 (2):\penalty0 279--328,
  2019.
\newblock \doi{10.1007/s00211-019-01024-y}.

\bibitem[Devolder et~al.(2014)Devolder, Glineur, and
  Nesterov]{devolder2014inexact}
O.~Devolder, F.~Glineur, and Y.~Nesterov.
\newblock First-order methods of smooth convex optimization with inexact
  oracle.
\newblock \emph{Mathematical Programming}, 146\penalty0 (1--2):\penalty0
  37--75, 2014.
\newblock \doi{10.1007/s10107-013-0677-5}.

\bibitem[Ehrhardt et~al.(2025)Ehrhardt, Kereta, Liang, and
  Tang]{ehrhardt2025guide}
M.~J. Ehrhardt, {\v Z}.~Kereta, J.~Liang, and J.~Tang.
\newblock A guide to stochastic optimisation for large-scale inverse problems.
\newblock \emph{Inverse Problems}, 41\penalty0 (5):\penalty0 053001, 2025.
\newblock \doi{10.1088/1361-6420/adc0b7}.

\bibitem[Farboodi and Veldkamp(2023)]{farboodi2023data}
M.~Farboodi and L.~Veldkamp.
\newblock Data and markets.
\newblock \emph{Annual Review of Economics}, 15:\penalty0 23--40, 2023.
\newblock \doi{10.1146/annurev-economics-082322-023244}.

\bibitem[Foster et~al.(2019)Foster, Sekhari, Shamir, Srebro, Sridharan, and
  Woodworth]{foster2019complexity}
D.~J. Foster, A.~Sekhari, O.~Shamir, N.~Srebro, K.~Sridharan, and B.~Woodworth.
\newblock The complexity of making the gradient small in stochastic convex
  optimization.
\newblock In \emph{Conference on Learning Theory}, pages 1319--1345. PMLR,
  2019.

\bibitem[Gilardi et~al.(2023)Gilardi, Alizadeh, and Kubli]{gilardi2023chatgpt}
F.~Gilardi, M.~Alizadeh, and M.~Kubli.
\newblock Chatgpt outperforms crowd workers for text-annotation tasks.
\newblock \emph{Proceedings of the National Academy of Sciences}, 120\penalty0
  (30):\penalty0 e2305016120, 2023.
\newblock \doi{10.1073/pnas.2305016120}.

\bibitem[Giles(2008)]{giles2008multilevel}
M.~B. Giles.
\newblock Multilevel monte carlo path simulation.
\newblock \emph{Operations Research}, 56\penalty0 (3):\penalty0 607--617, 2008.
\newblock \doi{10.1287/opre.1070.0496}.

\bibitem[Goel et~al.(2023)Goel, Gueta, Gilon, Liu, Erell, Nguyen, Hao, Jaber,
  Reddy, Kartha, Steiner, Laish, and Feder]{goel2023llms}
A.~Goel, A.~Gueta, O.~Gilon, C.~Liu, S.~Erell, L.~H. Nguyen, X.~Hao, B.~Jaber,
  S.~Reddy, R.~Kartha, J.~Steiner, I.~Laish, and A.~Feder.
\newblock Llms accelerate annotation for medical information extraction.
\newblock In \emph{Proceedings of the 3rd Machine Learning for Health
  Symposium}, volume 225 of \emph{Proceedings of Machine Learning Research},
  pages 82--100, 2023.

\bibitem[Gower et~al.(2019)Gower, Loizou, Qian, Sailanbayev, Shulgin, and
  Richt{\'a}rik]{gower2019sgd}
R.~M. Gower, N.~Loizou, X.~Qian, A.~Sailanbayev, E.~Shulgin, and
  P.~Richt{\'a}rik.
\newblock {SGD}: General analysis and improved rates.
\newblock In \emph{Proceedings of the 36th International Conference on Machine
  Learning}, pages 5200--5209, 2019.

\bibitem[Gower et~al.(2021)Gower, Sebbouh, and Loizou]{gower2021structured}
R.~M. Gower, O.~Sebbouh, and N.~Loizou.
\newblock Sgd for structured nonconvex functions: Learning rates, minibatching
  and interpolation.
\newblock In \emph{Proceedings of the 24th International Conference on
  Artificial Intelligence and Statistics}, volume 130 of \emph{Proceedings of
  Machine Learning Research}, pages 1315--1323, 2021.

\bibitem[Hayat and Hasan(2025)]{hayat2024claim}
A.~Hayat and M.~R. Hasan.
\newblock A context-aware approach for enhancing data imputation with
  pre-trained language models.
\newblock In \emph{Proceedings of the 31st International Conference on
  Computational Linguistics}, pages 5668--5685, Abu Dhabi, UAE, 2025.
  Association for Computational Linguistics.
\newblock URL \url{https://aclanthology.org/2025.coling-main.380/}.

\bibitem[He et~al.(2025)He, Ban, Zou, Wei, Cook, and He]{he2025llmforest}
X.~He, Y.~Ban, J.~Zou, T.~Wei, C.~Cook, and J.~He.
\newblock {LLM}-forest: Ensemble learning of {LLM}s with graph-augmented
  prompts for data imputation.
\newblock In \emph{Findings of the Association for Computational Linguistics:
  ACL 2025}, pages 6921--6936, Vienna, Austria, 2025. Association for
  Computational Linguistics.
\newblock \doi{10.18653/v1/2025.findings-acl.361}.

\bibitem[Hendrycks et~al.(2018)Hendrycks, Mazeika, Wilson, and
  Gimpel]{hendrycks2018trusted}
D.~Hendrycks, M.~Mazeika, D.~Wilson, and K.~Gimpel.
\newblock Using trusted data to train deep networks on labels corrupted by
  severe noise.
\newblock In \emph{Advances in Neural Information Processing Systems}, 2018.

\bibitem[Ho et~al.(2020)Ho, Jain, and Abbeel]{ho2020ddpm}
J.~Ho, A.~Jain, and P.~Abbeel.
\newblock Denoising diffusion probabilistic models.
\newblock In \emph{Advances in Neural Information Processing Systems},
  volume~33, pages 6840--6851, 2020.

\bibitem[Howard(1966)]{howard1966information}
R.~A. Howard.
\newblock Information value theory.
\newblock \emph{IEEE Transactions on Systems Science and Cybernetics},
  2\penalty0 (1):\penalty0 22--26, 1966.
\newblock \doi{10.1109/TSSC.1966.300074}.

\bibitem[Hu et~al.(2021)Hu, Chen, and He]{hu2021biasvariancecost}
Y.~Hu, X.~Chen, and N.~He.
\newblock On the bias-variance-cost tradeoff of stochastic optimization.
\newblock In \emph{Advances in Neural Information Processing Systems},
  volume~34, pages 22119--22131, 2021.

\bibitem[Hu et~al.(2024)Hu, Wang, Chen, and He]{hu2024mlmc}
Y.~Hu, J.~Wang, X.~Chen, and N.~He.
\newblock Multi-level monte-carlo gradient methods for stochastic optimization
  with biased oracles.
\newblock \emph{arXiv preprint arXiv:2408.11084}, 2024.

\bibitem[Ilandarideva et~al.(2025)Ilandarideva, Juditsky, Lan, and
  Li]{ilandarideva2025state}
S.~Ilandarideva, A.~B. Juditsky, G.~Lan, and T.~Li.
\newblock Accelerated stochastic approximation with state-dependent noise.
\newblock \emph{Mathematical Programming}, 213\penalty0 (1--2):\penalty0
  239--280, 2025.

\bibitem[Jain et~al.(2018)Jain, Kakade, Kidambi, Netrapalli, and
  Sidford]{jain2018accelerating}
P.~Jain, S.~M. Kakade, R.~Kidambi, P.~Netrapalli, and A.~Sidford.
\newblock Accelerating stochastic gradient descent for least squares
  regression.
\newblock In \emph{Proceedings of the 31st Conference on Learning Theory},
  volume~75 of \emph{Proceedings of Machine Learning Research}, pages 545--604.
  PMLR, 2018.

\bibitem[Juditsky et~al.(2023)Juditsky, Kulunchakov, and
  Tsyntseus]{juditsky2023sparse}
A.~Juditsky, A.~Kulunchakov, and H.~Tsyntseus.
\newblock Sparse recovery by reduced variance stochastic approximation.
\newblock \emph{Information and Inference: A Journal of the IMA}, 12\penalty0
  (2):\penalty0 851--896, 2023.

\bibitem[Kandasamy et~al.(2017)Kandasamy, Dasarathy, Schneider, and
  P{\'o}czos]{kandasamy2017multifidelity}
K.~Kandasamy, G.~Dasarathy, J.~Schneider, and B.~P{\'o}czos.
\newblock Multi-fidelity bayesian optimisation with continuous approximations.
\newblock In \emph{Proceedings of the 34th International Conference on Machine
  Learning}, pages 1799--1808, 2017.

\bibitem[Kotsalis et~al.(2022)Kotsalis, Lan, and Li]{kotsalis2022simple}
G.~Kotsalis, G.~Lan, and T.~Li.
\newblock Simple and optimal methods for stochastic variational inequalities,
  ii: Markovian noise and policy evaluation in reinforcement learning.
\newblock \emph{SIAM Journal on Optimization}, 32\penalty0 (2):\penalty0
  1120--1155, 2022.

\bibitem[Lan(2012)]{lan2012optimal}
G.~Lan.
\newblock An optimal method for stochastic composite optimization.
\newblock \emph{Mathematical Programming}, 133\penalty0 (1):\penalty0 365--397,
  2012.

\bibitem[Lan and Zhou(2018)]{lan2018rge}
G.~Lan and Y.~Zhou.
\newblock Random gradient extrapolation for distributed and stochastic
  optimization.
\newblock \emph{SIAM Journal on Optimization}, 28\penalty0 (4):\penalty0
  2753--2782, 2018.

\bibitem[Nahum et~al.(2025)Nahum, Calderon, Keller, Szpektor, and
  Reichart]{nahum2025labelerrors}
O.~Nahum, N.~Calderon, O.~Keller, I.~Szpektor, and R.~Reichart.
\newblock Are {LLM}s better than reported? detecting label errors and
  mitigating their effect on model performance.
\newblock In \emph{Proceedings of the 2025 Conference on Empirical Methods in
  Natural Language Processing}, pages 26782--26809, 2025.

\bibitem[Nemirovski et~al.(2009)Nemirovski, Juditsky, Lan, and
  Shapiro]{nemirovski2009robust}
A.~Nemirovski, A.~Juditsky, G.~Lan, and A.~Shapiro.
\newblock Robust stochastic approximation approach to stochastic programming.
\newblock \emph{SIAM Journal on Optimization}, 19\penalty0 (4):\penalty0
  1574--1609, 2009.

\bibitem[Nemirovskij and Yudin(1983)]{nemirovskij1983problem}
A.~S. Nemirovskij and D.~B. Yudin.
\newblock Problem complexity and method efficiency in optimization.
\newblock 1983.

\bibitem[Ong et~al.(2025)Ong, Almahairi, Wu, Chiang, Wu, Gonzalez, Kadous, and
  Stoica]{ong2025routellm}
I.~Ong, A.~Almahairi, V.~Wu, W.-L. Chiang, T.~Wu, J.~E. Gonzalez, M.~W. Kadous,
  and I.~Stoica.
\newblock Routellm: Learning to route llms from preference data.
\newblock In \emph{International Conference on Learning Representations}, 2025.

\bibitem[Parciak et~al.(2024)Parciak, Vandevoort, Neven, Peeters, and
  Vansummeren]{parciak2024schema}
M.~Parciak, B.~Vandevoort, F.~Neven, L.~M. Peeters, and S.~Vansummeren.
\newblock Schema matching with large language models: an experimental study.
\newblock In \emph{Proceedings of Workshops at the 50th International
  Conference on Very Large Data Bases (VLDB 2024), Tabular Data Analysis
  Workshop (TaDA)}, 2024.

\bibitem[Peeters et~al.(2025)Peeters, Steiner, and Bizer]{peeters2025entity}
R.~Peeters, A.~Steiner, and C.~Bizer.
\newblock Entity matching using large language models.
\newblock In \emph{Proceedings of the 28th International Conference on
  Extending Database Technology (EDBT)}, pages 529--541, 2025.
\newblock \doi{10.48786/edbt.2025.42}.

\bibitem[Peherstorfer et~al.(2018)Peherstorfer, Willcox, and
  Gunzburger]{peherstorfer2018multifidelity}
B.~Peherstorfer, K.~Willcox, and M.~Gunzburger.
\newblock Survey of multifidelity methods in uncertainty propagation,
  inference, and optimization.
\newblock \emph{SIAM Review}, 60\penalty0 (3):\penalty0 550--591, 2018.

\bibitem[Polyak(1990)]{polyak1990new}
B.~T. Polyak.
\newblock New stochastic approximation type procedures.
\newblock \emph{Automat. i Telemekh}, 7\penalty0 (98-107):\penalty0 2, 1990.

\bibitem[Ratner et~al.(2017)Ratner, Bach, Ehrenberg, Fries, Wu, and
  R{\'e}]{ratner2017snorkel}
A.~Ratner, S.~H. Bach, H.~Ehrenberg, J.~Fries, S.~Wu, and C.~R{\'e}.
\newblock Snorkel: Rapid training data creation with weak supervision.
\newblock \emph{Proceedings of the VLDB Endowment}, 11\penalty0 (3):\penalty0
  269--282, 2017.

\bibitem[Rodomanov et~al.(2024)Rodomanov, Jiang, and
  Stich]{rodomanov2024universality}
A.~Rodomanov, X.~Jiang, and S.~Stich.
\newblock Universality of adagrad stepsizes for stochastic optimization:
  Inexact oracle, acceleration and variance reduction.
\newblock \emph{Advances in Neural Information Processing Systems},
  37:\penalty0 26770--26813, 2024.

\bibitem[Sims(2003)]{sims2003rational}
C.~A. Sims.
\newblock Implications of rational inattention.
\newblock \emph{Journal of Monetary Economics}, 50\penalty0 (3):\penalty0
  665--690, 2003.
\newblock \doi{10.1016/S0304-3932(03)00029-1}.

\bibitem[Wang et~al.(2025{\natexlab{a}})Wang, Liu, Zhang, Chen, Wu, Zhou, Lou,
  Wang, Feng, Chen, and Wang]{wang2025llm4dsr}
B.~Wang, F.~Liu, C.~Zhang, J.~Chen, Y.~Wu, S.~Zhou, X.~Lou, J.~Wang, Y.~Feng,
  C.~Chen, and C.~Wang.
\newblock {LLM4DSR}: Leveraging large language model for denoising sequential
  recommendation.
\newblock \emph{ACM Transactions on Information Systems}, 44\penalty0 (1),
  2025{\natexlab{a}}.
\newblock \doi{10.1145/3762182}.

\bibitem[Wang et~al.(2025{\natexlab{b}})Wang, Li, Yang, Guan, Liu, Zhang, Qin,
  and Zhou]{wang2025llm4hrs}
S.~Wang, W.~Li, H.~Yang, J.~Guan, X.~Liu, Y.~Zhang, R.~Qin, and S.~Zhou.
\newblock {LLM4HRS}: {LLM}-based spatiotemporal imputation model for highly
  sparse remote sensing data.
\newblock \emph{IEEE Transactions on Geoscience and Remote Sensing},
  63:\penalty0 4204117, 2025{\natexlab{b}}.
\newblock \doi{10.1109/TGRS.2025.3555634}.

\bibitem[Wang et~al.(2025{\natexlab{c}})Wang, Zheng, Sui, and
  Xiong]{wang2025llard}
S.~Wang, Z.~Zheng, Y.~Sui, and H.~Xiong.
\newblock Unleashing the power of large language model for denoising
  recommendation.
\newblock In \emph{Proceedings of the ACM Web Conference 2025}, pages 252--263,
  2025{\natexlab{c}}.
\newblock \doi{10.1145/3696410.3714758}.

\bibitem[Woodworth and Srebro(2021)]{woodworth2021even}
B.~E. Woodworth and N.~Srebro.
\newblock An even more optimal stochastic optimization algorithm: minibatching
  and interpolation learning.
\newblock \emph{Advances in neural information processing systems},
  34:\penalty0 7333--7345, 2021.

\bibitem[Ye et~al.(2025)Ye, Shah, Zhang, and Chava]{ye2025calibrating}
L.~Ye, A.~Shah, C.~Zhang, and S.~Chava.
\newblock Calibrating pre-trained language classifiers on {LLM}-generated noisy
  labels via iterative refinement.
\newblock In \emph{Proceedings of the 31st ACM SIGKDD Conference on Knowledge
  Discovery and Data Mining V.2}, pages 3598--3609, 2025.
\newblock \doi{10.1145/3711896.3736871}.

\bibitem[Zhang et~al.(2024)Zhang, Dong, Xiao, and Oyamada]{zhang2024jellyfish}
H.~Zhang, Y.~Dong, C.~Xiao, and M.~Oyamada.
\newblock Jellyfish: Instruction-tuning local large language models for data
  preprocessing.
\newblock In \emph{Proceedings of the 2024 Conference on Empirical Methods in
  Natural Language Processing}, pages 8754--8782, 2024.

\end{thebibliography}

\vspace{0.3in}

\appendix
\section{Proofs in Section~\ref{sec:known_bias}}\label{app:proof}
This section contains full proofs of main results in Section~\ref{sec:known_bias}.
\subsection{Proof of Theorem~\ref{thm:sge}}\label{sec:proof_sge_thm}
To prove Theorem~\ref{thm:sge}, we first provide the following proposition, which characterizes the SGE recursion. This recursion lets us handle state-dependent variance and persistent bias transparently. The proof of this proposition, which is deferred to Appendix~\ref{subsec:proof_of_proposition}, follows a similar SGE analysis technique as in \citet{ilandarideva2025state}, but here it also incorporates a persistent bias term and the strongly convex regularization appearing in Algorithm~\ref{alg:sge}.


\begin{proposition}[SGE recursion]\label{prop:general_convergence}
Suppose $\{z_t\}$ and $\{x_t\}$ are generated by Algorithm~\ref{alg:sge}. Let the algorithm parameters $\{\alpha_t\}$, $\{\eta_t\}$ and $\{\tau_t\}$ satisfy $\tau_1 \geq 0,$ $\tau_t > 0$ for all $t\geq 2$, along with relations
\begin{subequations}
\label{eq:def_rell}
\begin{align}
\theta_{t-1} &= \alpha_t \theta_t, ~~\eta_t \le \alpha_t (\eta_{t-1} + \mu\alpha_{t-1}) , ~~ t= 2, \ldots,k \label{eq:def_eta_rel}\\
\tfrac{\eta_t \tau_{t-1}}{\alpha_{t}} &\ge{4}L,
\quad t= 3, \ldots, k \label{eq:def_Lip_rel}\\
 \tfrac{\eta_1\eta_2}{\alpha_2} &\geq  {16}L^2, ~~\eta_k \tau_k  \ge 2L, \label{eq:def_Lip_rel1}
\end{align}
\end{subequations}
 for some $\theta_t \ge 0$.
Then for all $x \in \feaReg$
\begin{align}
&\sum_{t=1}^k \theta_t \left\{  \tau_t [f(x_t) -f(x_{t-1})]- [\langle \nabla f(x_t), x - x_t \rangle + \tfrac{\mu}{2}\|x_t-x\|^2]\right\} + \tfrac{\theta_k}{2}(\tfrac{\eta_k}{2} + \mu\alpha_k ) \|z_k-x\|^2 \nn\\
&\leq \tfrac{\theta_1\eta_1}{2}\|x_0-x\|^2 - \tfrac{\theta_k \mu}{2}\|x_k-x\|^2- \sum_{t=1}^k \theta_t \langle {b(x_{t-1}) + \alpha_t (b(x_{t-1}) - b(x_{t-2}))}, z_t - x\rangle\nn\\
&\quad + \sum_{t=0}^{k-1}\left[  \tfrac{2\theta_{t+1}(1+\alpha_{t+1}^2)}{\eta_{t+1}}\|\delta_t(x_t)\|^2 + \tfrac{2\theta_{t+2}\alpha_{t+2}^2}{\eta_{t+2}}\|\delta_{t+1}(x_t)\|^2 \right]\nn\\
&\quad + {\sum_{t=0}^{k-1}  \theta_{t+1} \langle\alpha_{t+1} \delta_{t}(x_{t-1})- (1+\alpha_{t+1})\delta_{t}(x_{t}), z_{t} - x\rangle}.
\label{eq:prop_GEM_main_general}
\end{align}
where $g(x) := \bbe[G_t(x)]$, $b(x) := g(x)-\nabla f(x)$, and 
$\delta_t(x):=G_t(x)-g(x)$.
 \end{proposition}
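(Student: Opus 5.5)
We will prove Proposition~\ref{prop:general_convergence} by a one-step analysis of the prox-mapping in Line~4 of Algorithm~\ref{alg:sge}. This is followed by a weighted telescoping with weights $\theta_t$, in the style of the SGE/gradient-extrapolation analysis of \citet{lan2018rge,ilandarideva2025state}.

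\textbf{Step 1: one-step prox inequality.} The objective in Line~4 is $(\mu\alpha_t+\eta_t)$-strongly convex, so its optimality condition gives, for all $x\in X$,
\[
\langle\widetilde G_t,z_t-x\rangle+\tfrac{\mu\alpha_t}{2}\|x_{t-1}-z_t\|^2+\tfrac{\eta_t}{2}\|z_{t-1}-z_t\|^2+\tfrac{\mu\alpha_t+\eta_t}{2}\|z_t-x\|^2\le \tfrac{\mu\alpha_t}{2}\|x_{t-1}-x\|^2+\tfrac{\eta_t}{2}\|z_{t-1}-x\|^2 .
\]

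\textbf{Step 2: decompose the extrapolated estimator.} Write $G_{t-1}(y)=\nabla f(y)+b(y)+\delta_{t-1}(y)$ and split $\widetilde G_t$ into three parts. The first is the deterministic extrapolation $\nabla f(x_{t-1})+\alpha_t(\nabla f(x_{t-1})-\nabla f(x_{t-2}))$. The second is the bias part, which produces the term $-\sum_t\theta_t\langle b(x_{t-1})+\alpha_t(b(x_{t-1})-b(x_{t-2})),z_t-x\rangle$ exactly as it appears in the statement. The third is the noise part.

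For the deterministic part, we set $\Delta_t:=\nabla f(x_t)-\nabla f(x_{t-1})$ and write $\langle\nabla f(x_{t-1})+\alpha_t\Delta_{t-1},z_t-x\rangle=\langle\nabla f(x_t),z_t-x\rangle-\langle\Delta_t,z_t-x\rangle+\alpha_t\langle\Delta_{t-1},z_{t-1}-x\rangle+\alpha_t\langle\Delta_{t-1},z_t-z_{t-1}\rangle$. Multiplying by $\theta_t$ and using $\theta_{t-1}=\alpha_t\theta_t$, the middle cross terms telescope. The leftover $\theta_t\alpha_t\langle\Delta_{t-1},z_t-z_{t-1}\rangle$ is handled by Young's inequality against $\tfrac{\theta_t\eta_t}{4}\|z_t-z_{t-1}\|^2$, with $\|\Delta_{t-1}\|^2\le 2L[f(x_{t-2})-f(x_{t-1})-\langle\nabla f(x_{t-1}),x_{t-2}-x_{t-1}\rangle]$ absorbed into the $\tau$-terms; this is where \eqref{eq:def_Lip_rel} enters. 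The terminal residual $\theta_k\langle\Delta_k,z_k-x\rangle$ is absorbed by $\tfrac{\theta_k\eta_k}{4}\|z_k-x\|^2$ via \eqref{eq:def_Lip_rel1}, and $t=1,2$ use $\eta_1\eta_2/\alpha_2\ge16L^2$.

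Next, $z_t=(1+\tau_t)x_t-\tau_tx_{t-1}$ gives $\langle\nabla f(x_t),z_t-x\rangle=\langle\nabla f(x_t),x_t-x\rangle+\tau_t\langle\nabla f(x_t),x_t-x_{t-1}\rangle$. By convexity the second term is at least $\tau_t[f(x_t)-f(x_{t-1})]$ plus a Bregman remainder, which feeds the absorption above. The $\tfrac{\mu\alpha_t}{2}\|x_{t-1}-x\|^2$ terms are converted using strong convexity and convexity of the squared norm along $x_t$, producing the $\tfrac{\mu}{2}\|x_t-x\|^2$ terms and $-\tfrac{\theta_k\mu}{2}\|x_k-x\|^2$. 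The condition $\eta_t\le\alpha_t(\eta_{t-1}+\mu\alpha_{t-1})$ makes $\theta_t\eta_t\le\theta_{t-1}(\eta_{t-1}+\mu\alpha_{t-1})$, so the distance terms telescope down to $\tfrac{\theta_1\eta_1}{2}\|x_0-x\|^2$.

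\textbf{Step 3: noise and the main obstacle.} The noise part is $\langle(1+\alpha_t)\delta_{t-1}(x_{t-1})-\alpha_t\delta_{t-1}(x_{t-2}),z_t-x\rangle$. We split $z_t-x=(z_t-z_{t-1})+(z_{t-1}-x)$. The $z_{t-1}-x$ piece is kept as the (martingale-type) last line of \eqref{eq:prop_GEM_main_general} after reindexing $t\to t+1$. The $z_t-z_{t-1}$ piece is bounded by Young's inequality against the remaining $\tfrac{\theta_t\eta_t}{4}\|z_t-z_{t-1}\|^2$, which yields the $\tfrac{2\theta(1+\alpha^2)}{\eta}\|\delta\|^2$ and $\tfrac{2\theta\alpha^2}{\eta}\|\delta\|^2$ terms. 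The main obstacle is the bookkeeping here: the single proximity budget $\tfrac{\eta_t}{2}\|z_t-z_{t-1}\|^2$ must be split exactly so that it covers both the gradient-difference term and the two noise terms. The noise samples $\delta_{t}(x_t)$ and $\delta_{t+1}(x_t)$ are evaluated at shifted indices, and the constants $4L$, $16L^2$ and $2$ must line up. I would first carry out the allocation with halves of the proximity term, then verify that the $t=1,2$ boundary cases (with $x_{-1}=x_0$) close under \eqref{eq:def_Lip_rel1}.
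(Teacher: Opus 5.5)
Your route is the paper's route: prox optimality for Line~4, the identity $z_t=x_t-\tau_t(x_{t-1}-x_t)$ with $\tfrac{1}{2L}\|\nabla f(x_t)-\nabla f(x_{t-1})\|^2\le f(x_{t-1})-f(x_t)-\langle\nabla f(x_t),x_{t-1}-x_t\rangle$, $\theta_t$-weighted telescoping via $\theta_{t-1}=\alpha_t\theta_t$, and Young's inequality on the $z_t-z_{t-1}$ cross terms. The step that fails as written is your halves allocation of the proximity budget $\tfrac{\theta_t\eta_t}{2}\|z_t-z_{t-1}\|^2$.

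The noise produces three cross terms: $-\theta_t\langle\delta_{t-1}(x_{t-1}),z_t-z_{t-1}\rangle$, $-\theta_t\alpha_t\langle\delta_{t-1}(x_{t-1}),z_t-z_{t-1}\rangle$ and $\theta_t\alpha_t\langle\delta_{t-1}(x_{t-2}),z_t-z_{t-1}\rangle$. Keeping the first two apart is what gives $1+\alpha_t^2$ rather than $(1+\alpha_t)^2$. Getting the coefficient $2\theta_t/\eta_t$, resp.\ $2\theta_t\alpha_t^2/\eta_t$, on each square costs $\tfrac{\theta_t\eta_t}{8}\|z_t-z_{t-1}\|^2$ per term. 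So the noise needs $\tfrac{3\theta_t\eta_t}{8}$, three quarters of the budget. The remaining $\tfrac{\theta_t\eta_t}{4}$ in your plan is not enough: an even split over the three terms gives $3\theta_t/\eta_t$ in place of $2\theta_t/\eta_t$, so the stated constants do not come out.

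The correct split is one quarter for the gradient-difference term and three quarters for the noise, and the hypotheses are tuned exactly to it.
\begin{itemize}
\item For $t\ge3$, Young's inequality paired with the $-\tfrac{\theta_{t-1}\tau_{t-1}}{2L}\|\nabla f(x_{t-1})-\nabla f(x_{t-2})\|^2$ term from step $t-1$ charges $\tfrac{L\theta_t\alpha_t}{2\tau_{t-1}}\|z_t-z_{t-1}\|^2$. The requirement $\tfrac{L\theta_t\alpha_t}{2\tau_{t-1}}\le\tfrac{\theta_t\eta_t}{8}$ is precisely \eqref{eq:def_Lip_rel}.
\item For $t=2$, where $\tau_1$ may vanish, write $\theta_2\alpha_2\langle\nabla f(x_1)-\nabla f(x_0),z_2-z_1\rangle\le\tfrac{2\theta_2\alpha_2^2}{\eta_2}\|\nabla f(x_1)-\nabla f(x_0)\|^2+\tfrac{\theta_2\eta_2}{8}\|z_2-z_1\|^2$. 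Then use $\|\nabla f(x_1)-\nabla f(x_0)\|\le L\|x_1-x_0\|\le L\|z_1-z_0\|$, which holds because $x_0=z_0$ and $x_1-x_0=(z_1-z_0)/(1+\tau_1)$. Absorb the result into the step-$1$ quarter $\tfrac{\theta_1\eta_1}{8}\|z_1-z_0\|^2$; this is exactly $\eta_1\eta_2/\alpha_2\ge16L^2$.
\end{itemize}
The terminal step with $\eta_k\tau_k\ge2L$ is as you describe.

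A smaller correction: the $\tfrac{\mu}{2}\|x_t-x\|^2$ terms need neither strong convexity nor convexity of the squared norm. Subtract $\tfrac{\mu}{2}\|x_t-x\|^2$ from both sides of each one-step inequality. Since $\theta_t\alpha_t=\theta_{t-1}$, the terms $\theta_t\alpha_t\tfrac{\mu}{2}\|x_{t-1}-x\|^2$ from the prox inequality telescope against $-\theta_t\tfrac{\mu}{2}\|x_t-x\|^2$, leaving $-\tfrac{\theta_k\mu}{2}\|x_k-x\|^2$. This requires $\alpha_1=0$, which the paper uses implicitly and which holds for $\alpha_t=\tfrac{t-1}{t}$; otherwise an extra $\tfrac{\theta_1\alpha_1\mu}{2}\|x_0-x\|^2$ survives. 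Strong convexity is used only afterwards, in the proof of Theorem~\ref{thm:sge}.
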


Taking Proposition~\ref{prop:general_convergence} as given for the moment, let us complete the proof of the theorem.
\paragraph{1$^o$. An SGE recursion in expectation.}
Set $\theta_t=t$ and define
\begin{equation}\label{eq:def_epsilon_t_adaptive}
q_t
=
\tfrac{2\theta_{t+1}(1+\alpha_{t+1}^2)}{\eta_{t+1}}
+
\tfrac{2\theta_{t+2}\alpha_{t+2}^2}{\eta_{t+2}},
\qquad
\epsilon_t=\tfrac{q_t}{m}.
\end{equation}
A direct substitution shows that the parameter choices in Theorem~\ref{thm:sge} satisfy the conditions of Proposition~\ref{prop:general_convergence} and, moreover,
\begin{equation}\label{eq:batchsizeBt_adaptiveb}
\theta_t\tau_t+\cL\epsilon_{t-1}
\le
\theta_{t-1}(1+\tau_{t-1}),
\qquad t\ge2.
\end{equation}
The latter relation is the key condition that absorbs the state-dependent part of the stochastic variance into the telescoping function-value terms in the recursion.

Apply Proposition~\ref{prop:general_convergence} with $x=x^*$ and take expectations on both sides of Ineq.~\eqref{eq:prop_GEM_main_general}, we have
\begin{align}
&\sum_{t=1}^k \theta_t \bbe\left\{  \tau_t [f(x_t) -f(x_{t-1})]- [\langle \nabla f(x_t), x^* - x_t \rangle + \tfrac{\mu}{2}\bbe\|x_t-x^*\|^2]\right\} + \tfrac{\theta_k}{2}(\tfrac{\eta_k}{2} + \mu\alpha_k ) \|z_k-x^*\|^2 \nn\\
&\leq \tfrac{\theta_1\eta_1}{2}\|x_0-x^*\|^2 - \tfrac{\theta_k \mu}{2}\bbe\|x_k-x^*\|^2+\sum_{t=1}^k \theta_t (1+2\alpha_t)\cB \bbe\|z_t - x^*\|\nn\\
&\quad + \sum_{t=0}^{k-1} \cL \epsilon_t \bbe[f(x_t)-f^* 
] +  \sum_{t=0}^{k-1} \epsilon_t \sigma_*^2, \label{expected_recursion_sge}
\end{align}
where we use Cauchy-Schwarz inequality and $\|b(x_t)\|\leq \cB$ in \eqref{eq:bias} to bound the bias term, $\bbe[\delta_t (x_{j})] = 0$ for $j\leq t$ for the martingale term, and 
\[
\E\|\delta_t(x)\|^2
\le
\tfrac1m\Bigl(
\cL\,[f(x)-f^*
]
+\sigma_*^2
\Bigr),
\]
to bound the variance term.
Then, using strong convexity,
\[
\langle\nabla f(x_t),x^*-x_t\rangle
+\tfrac{\mu}{2}\|x_t-x^*\|^2
\le
f^*-f(x_t),
\]
we have
\begin{align}\label{convergence_sge}
&\theta_k(1+\tau_k)\bbe[f(x_k) -f^*] + \tfrac{\theta_k}{2}\left(\tfrac{\eta_k}{2} + \mu\alpha_k \right) \|z_k-x^*\|^2 + \tfrac{\theta_k \mu}{2}\bbe\|x_k-x^*\|^2 \nn\\
&\leq\tfrac{\theta_1\eta_1}{2}\|x_0-x^*\|^2 + \theta_1 \tau_1 [f(x_0) - f^*] + \cL \epsilon_0 [f(x_0) - f^* 
]\nn\\
& \quad + \sum_{t=1}^{k-1} (\theta_{t+1}\tau_{t+1} + \cL\epsilon_t - \theta_t (1+\tau_t)) [f(x_t) - f^*]  +\sum_{t=1}^k \theta_t (1+2\alpha_t)\cB \bbe\|z_t - x^*\| +  \sum_{t=0}^{k-1} \epsilon_t \sigma_*^2\nn\\
&\le
\tfrac{\theta_1\eta_1}{2}\|x_0-x^*\|^2 
+\cL \epsilon_0 [f(x_0) - f^* ]
+\sum_{t=1}^k\theta_t(1+2\alpha_t)\cB\,\E\|z_t-x^*\|
+\sum_{t=1}^k\epsilon_{t-1}\sigma_*^2.
\end{align}
where the second inequality follows from condition \eqref{eq:batchsizeBt_adaptiveb} 
and $\tau_1=0$.
Substituting the explicit choices of $\theta_t,\alpha_t,\eta_t,$ and $\tau_t$ into \eqref{convergence_sge}, and dropping the nonnegative terminal term involving $\|x_k-x^*\|^2$, we have
\begin{equation}\label{eq:app-master}
A_k\E[f(x_k)-f^*]
+\tfrac{P_k}{2}\E\|z_k-x^*\|^2
\le
\Gamma_k
+\cB\sum_{t=1}^k(3t-2)\E\|z_t-x^*\|,
\end{equation}
where
\[
A_k=\tfrac{k(k+2)}{3},
\qquad
P_k=\tfrac{\eta}{2}+\tfrac{\mu k(k-1)}{4},
\]
and
\[
\Gamma_k
=
\tfrac12\left(
24L+\tfrac{54\cL}{m}+\tfrac{162\cL^2}{\mu m^2}
\right)\|x_0-x^*\|^2 + \tfrac{2}{27}[f(x_0) -f^*]
+\tfrac{32k}{\mu m}\sigma_*^2.
\]

\paragraph{2$^o$. Self-bounding the accumulated bias.}
Next we bound the summation of the linear bias terms in Ineq.~\eqref{eq:app-master}. Define
\[
R_k
:=
\Gamma_k
+\cB\sum_{t=1}^k(3t-2)\E\|z_t-x^*\|.
\]
Since
$
\tfrac{P_t}{2}\E\|z_t-x^*\|^2\le R_t\le R_k
$
for every $t\le k$,
\[
\E\|z_t-x^*\|
\le
\sqrt{\E\|z_t-x^*\|^2}
\le
\sqrt{\tfrac{2R_k}{P_t}}.
\]
Moreover, $\eta\ge24L\ge24\mu$ implies
$
P_t\ge \tfrac{\mu(3t-2)^2}{36}$, thus
$
\tfrac{3t-2}{\sqrt{P_t}}
\le
\frac{6}{\sqrt{\mu}}.
$
Therefore, by Ineq.~\eqref{eq:app-master}, we have
\[
R_k \leq \Gamma_k + \tsum_{t=1}^k (3t-2)\cB \sqrt{\tfrac{2 R_k}{P_t}}
\le
\Gamma_k
+\frac{6k\cB \sqrt{2R_k}}{\sqrt\mu}
\le
\Gamma_k+\frac{R_k}{2}
+\frac{36k^2\cB^2}{\mu},
\]
where the last inequality follows from Young's inequality.
Rearranging the terms yields
\begin{align}\label{bound_R_k}
R_k
\le
2\Gamma_k+\tfrac{72k^2\cB^2}{\mu}.
\end{align}

Dividing \eqref{eq:app-master} by $A_k$ and using the bound of $R_k$ in Ineq.~\eqref{bound_R_k}, we obtain
\begin{align*}
\E[f(x_k)-f^*]
\le&
\left(
\tfrac{72L}{k(k+2)}
+\tfrac{162\cL}{mk(k+2)}
+\tfrac{486\cL^2}{\mu m^2k(k+2)}
\right)\|x_0-x^*\|^2 + \tfrac{2}{9k(k+2)}[f(x_0)-f^*]+\tfrac{192\sigma_*^2}{\mu m(k+2)}
+\tfrac{216\cB^2}{\mu}\nn\\
\le&
\left(
\tfrac{73L}{k(k+2)}
+\tfrac{162\cL}{mk(k+2)}
+\tfrac{486\cL^2}{\mu m^2k(k+2)}
\right)\|x_0-x^*\|^2 +\tfrac{192\sigma_*^2}{\mu m(k+2)}
+\tfrac{216\cB^2}{\mu},
\end{align*}
where the second inequality follows from $f(x_0) - f^* = f(x_0) - f^* - \langle \nabla f(x^*), x_0 - x^*\rangle \leq \tfrac{L}{2}\|x_0 - x^*\|^2$, and we prove Ineq.~\eqref{eq:sge-rate}. Finally, for $m=1$, the stated lower bound on $k$ makes the optimization and residual-variance terms at most $\mu\|x_0-x^*\|^2/8$. Strong convexity then indicates
\[
\E\|x_k-x^*\|^2
\le
\tfrac{2}{\mu}\E[f(x_k)-f^*]
\le
\tfrac14\|x_0-x^*\|^2
+\tfrac{432\cB^2}{\mu^2},
\]
which proves Ineq.~\eqref{sge_contraction}.

\subsubsection{Proof of Proposition~\ref{prop:general_convergence}}\label{subsec:proof_of_proposition}
\textbf{1$^o$.} 
First, 
by the smoothness of $f$, we have
\begin{equation}
\tfrac{1}{2L}\|\nabla f(x_t) - \nabla f(x_{t-1})\|^2 \le f(x_{t-1}) - [f(x_t) + \langle \nabla f(x_t), x_{t-1} - x_t\rangle].
\end{equation}
Then for any $x\in \feaReg$, using the above inequality and the fact that
$x -  x_t  = x -z_t -  \tau_t( x_{t-1} -  x_t)$ due to Step 5 in Algorithm~\ref{alg:sge},
we get
\begin{align*}
\tau_t f( x_t) - \langle \nabla f(x_t), x -  x_t \rangle
&= \tau_t [f( x_t) + \langle \nabla f(x_t),  x_{t-1} -  x_t \rangle] + \langle \nabla f(x_t), z_t - x \rangle  \\
&\le \tau_t [f( x_{t-1}) - \tfrac{1}{2L} \|\nabla f(x_t) - \nabla f(x_{t-1})\|^2] + \langle \nabla f(x_t), z_t - x \rangle .
\end{align*}
By the optimality condition of Step 3 in Algorithm~\ref{alg:sge}, we have
\begin{align*}
 &\langle \widetilde G_t, z_t - x \rangle + \tfrac{\mu\alpha_t}{2}\|x_{t-1}-z_t\|^2 \le  \tfrac{\eta_t}{2}\|z_{t-1}-x\|^2 - \tfrac{\eta_t+\mu\alpha_t}{2}\|z_t-x\|^2 - \tfrac{\eta_t}{2}\|z_{t-1}-z_t\|^2 + \tfrac{\mu\alpha_t}{2}\|x_{t-1}-x\|^2.
\end{align*}
Combining the two previous inequalities and rearranging the terms, we obtain
\begin{align*}
& \tau_t f( x_t) - [\langle {\nabla f(x_t)}, x -  x_t \rangle + \tfrac{\mu}{2}\|x_t-x\|^2]   - \tau_t f( x_{t-1})\\
&\le \tfrac{\eta_t}{2}\|z_{t-1}-x\|^2 - \tfrac{\eta_t+\mu\alpha_t}{2}\|z_t-x\|^2 + \langle {\nabla f(x_t)} - \widetilde G_t, z_t - x \rangle - \tfrac{\mu}{2}\|x_t-x\|^2\\
&\quad + \tfrac{\mu\alpha_t}{2}\|x_{t-1}-x\|^2-\tfrac{\tau_t}{2L} \|{\nabla f(x_t)-\nabla f(x_{t-1})}\|^2 - \tfrac{\eta_t}{2}\|z_{t-1}-z_t\|^2.
\end{align*}
Invoking the definition of $\delta_t(\cdot)$ and $b(\cdot)$, we have
\begin{align*}
\langle {\nabla f(x_t)} - \widetilde G_t, z_t - x \rangle
&= \langle {\nabla f(x_t) - \nabla f(x_{t-1}) - \alpha_t (\nabla f(x_{t-1}) - \nabla f(x_{t-2}))}, z_t - x \rangle\\
&\quad - \langle {\delta_{t-1}(x_{t-1}) + \alpha_t (\delta_{t-1}(x_{t-1}) - \delta_{t-1}(x_{t-2}))}, z_t - x\rangle\\
&\quad - \langle {b(x_{t-1}) + \alpha_t (b(x_{t-1}) - b(x_{t-2}))}, z_t - x\rangle\\
&= \langle {\nabla f(x_t) - \nabla f(x_{t-1})}, z_t - x\rangle - \alpha_t \langle  {\nabla f(x_{t-1}) - \nabla f(x_{t-2})}, z_{t-1} - x \rangle\\
&\quad + \alpha_t \langle {\nabla f(x_{t-1}) - \nabla f(x_{t-2})}, z_t - z_{t-1} \rangle\\
&\quad - \langle {\delta_{t-1}(x_{t-1}) + \alpha_t (\delta_{t-1}(x_{t-1}) - \delta_{t-1}(x_{t-2}))}, z_t - x\rangle\\
&\quad - \langle {b(x_{t-1}) + \alpha_t (b(x_{t-1}) - b(x_{t-2}))}, z_t - x\rangle.
\end{align*}
When taking the $\theta_t$-weighted sum of the above inequalities for $t= 1, \ldots, k$,
recalling that $x_0 = z_0$, and using \eqref{eq:def_eta_rel}, we obtain
\begin{align}
&\sum_{t=1}^k \theta_t \left\{  \tau_t [f( x_t)- f( x_{t-1})] - [\langle {\nabla f(x_t)}, x -  x_t \rangle + \tfrac{\mu}{2}\|x_t-x\|^2]  \right\}  \nn\\
&\le \tfrac{\theta_1\eta_1}{2}\|x_0-x\|^2 - \theta_k (\eta_k + \mu \alpha_k) \tfrac12\|z_k-x\|^2 - \sum_{t=1}^k \theta_t \langle {b(x_{t-1}) + \alpha_t (b(x_{t-1}) - b(x_{t-2}))}, z_t - x\rangle\nn\\
&\quad +\theta_k \langle {\nabla f(x_k) - \nabla f(x_{k-1})}, z_k - x \rangle - \theta_k \tfrac{\mu}{2}\|x_k-x\|^2 + \theta_1 \mu \alpha_1 \tfrac12\|x_0-x\|^2   + \Delta_k \label{eq:GEM_converge_rel1}
\end{align}
where
\begin{align*}
\Delta_k &:= \sum_{t=1}^k \theta_t \big[\alpha_t \langle {\nabla f(x_{t-1}) - \nabla f(x_{t-2})}, z_t - z_{t-1} \rangle -\tfrac{\tau_t}{2L} \|{\nabla f(x_t) - \nabla f(x_{t-1})}\|^2\\
& ~\qquad\quad\qquad - \langle {\delta_{t-1}(x_{t-1}) + \alpha_t (\delta_{t-1}(x_{t-1}) - \delta_{t-1}(x_{t-2}))}, z_t - x\rangle  - \tfrac{\eta_t}{2}\|z_{t-1}-z_t\|^2\big].
\end{align*}
Observe that
\[\theta_t\alpha_t\langle {\nabla f(x_{t-1})-\nabla f(x_{t-2})},z_t-z_{t-1}\rangle\leq \tfrac{\theta_t^2\alpha_t^2L}{ 2\theta_{t-1}\tau_{t-1}}\|z_t-z_{t-1}\|^2+\tfrac{\theta_{t-1}\tau_{t-1}}{ 2L}\|{\nabla f(x_{t-1})-\nabla f(x_{t-2})}\|^2,
\]
so that,
after rearranging the terms,
\begin{align*}
\Delta_k &\le \underbrace{\theta_2\alpha_2 \langle {\nabla f(x_1)-\nabla f(x_0)}, z_2-z_1\rangle +  \sum_{t=3}^k \tfrac{L \theta_{t}^2 \alpha_{t}^2}{2 \theta_{t-1} \tau_{t-1}} \|z_t - z_{t-1}\|^2} \\
&\quad \underbrace{- \tfrac{\theta_k \tau_k}{2 L}\|{\nabla f(x_k) - \nabla f(x_{k-1})}\|^2 - {\tfrac{1}{4}}\sum_{t=1}^k  \tfrac{\theta_t\eta_t}{2}\|z_{t-1}-z_t\|^2}_{=:\Delta_{k,1}} \\
& \quad \underbrace{-\sum_{t=1}^k \theta_t \langle {\delta_{t-1}(x_{t-1}) + \alpha_t (\delta_{t-1}(x_{t-1}) - \delta_{t-1}(x_{t-2}))}, z_t - x\rangle - {\tfrac{3}{4}}\sum_{t=1}^k \tfrac{\theta_t\eta_t}{2}\|z_{t-1}-z_t\|^2}_{=:\Delta_{k,2}}.
\end{align*}
\paragraph{2$^o$.}
We next bound $\Delta_{k,1}$. By Young's inequality,
\begin{align*}
\alpha_2\langle {\nabla f(x_1)-\nabla f(x_0)},z_2-z_1\rangle\leq \tfrac{2\alpha_2^2 }{\eta_2}\|{\nabla f(x_1)-\nabla f(x_0)}\|^2+\tfrac{\eta_2}{8}\|z_2-z_1\|^2.
\end{align*}
Thus,
\begin{align}\label{eq:bound_delta_1}
    \Delta_{k,1} &\leq \tfrac{{2}\theta_2\alpha_2^2 \|{\nabla f(x_1)-\nabla f(x_0)}\|^2}{\eta_2} - \tfrac{\theta_1\eta_1}{{8}}\|z_1-z_0\|^2\nn\\
    &\quad +\sum_{t=3}^k \left(\tfrac{L \theta_{t}^2 \alpha_{t}^2}{2 \theta_{t-1} \tau_{t-1}} - \tfrac{\theta_t \eta_t}{{8}}\right)\|z_t - z_{t-1}\|^2- \tfrac{\theta_k \tau_k}{2 L}\|{\nabla f(x_k) - \nabla f(x_{k-1})}\|^2\nn\\
    &\leq - \tfrac{\theta_k \tau_k}{2 L}\|{\nabla f(x_k) - \nabla f(x_{k-1})}\|^2
\end{align}
where the last inequality follows from \eqref{eq:def_Lip_rel} and the bound
\begin{align*}
	\tfrac{{2}\theta_2\alpha_2^2 \|{\nabla f(x_1)-\nabla f(x_0)}\|^2}{\eta_2}  \leq \tfrac{2\theta_2\alpha_2^2L^2\| x_1 -  x_0\|^2}{\eta_2} = \tfrac{2\theta_2\alpha_2^2L^2\|z_1 -  z_0\|^2}{\eta_2}  \leq \tfrac{\theta_1 \eta_1 \|z_1-z_0\|^2}{{8}}.
\end{align*}
Note that
\begin{align*}
&  -  \tfrac{\eta_k}{4} \|z_k-x\|^2
+ \langle {\nabla f(x_k) - \nabla f(x_{k-1})}, z_k - x \rangle
- \tfrac{ \tau_k}{2 L}\|{\nabla f(x_k) - \nabla f(x_{k-1})}\|^2 \nn \\
&\qquad\le - (\tfrac{\eta_k }{4} - \tfrac{L}{2\tau_k}) \|z_k-x\|^2 \le 0
\end{align*}
due to the second relationship in \eqref{eq:def_Lip_rel1}.
Now, substituting the bound \eqref{eq:bound_delta_1} into \eqref{eq:GEM_converge_rel1} and applying $\alpha_1= 0$ results in
\begin{align}
&\sum_{t=1}^k \theta_t \left\{  \tau_t [f( x_t)-f( x_{t-1})] - [\langle {\nabla f(x_t)}, x -  x_t \rangle + \tfrac{\mu}{2}\|x_t-x\|^2]\right\} \nn\\
&\qquad\le \tfrac{\theta_1\eta_1}{2}\|x_0-x\|^2 - \tfrac{\theta_k}{2} (\eta_k + \mu \alpha_k)\|z_k-x\|^2- \sum_{t=1}^k \theta_t \langle {b(x_{t-1}) + \alpha_t (b(x_{t-1}) - b(x_{t-2}))}, z_t - x\rangle \nn\\
&\qquad \quad +\theta_k \langle {\nabla f(x_k) - \nabla f(x_{k-1})}, z_k - x \rangle  -  \tfrac{\theta_k\mu}{2}\|x_k-x\|^2    - \tfrac{\theta_k \tau_k}{2 L}\|{\nabla f(x_k) - \nabla f(x_{k-1})}\|^2 + \Delta_{k,2}\nn\\
&\qquad \le \tfrac{\theta_1\eta_1}{2}\|x_0-x\|^2 - \tfrac{\theta_k}{2} (\tfrac{\eta_k}{2} + \mu \alpha_k) \|z_k-x\|^2 - \tfrac{\theta_k \mu}{2}\|x_k-x\|^2\nn\\
&\qquad \quad- \sum_{t=1}^k \theta_t \langle {b(x_{t-1}) + \alpha_t (b(x_{t-1}) - b(x_{t-2}))}, z_t - x\rangle + \Delta_{k,2}.\label{eq:GEM_converge_rel1_1}
\end{align}

\paragraph{\textbf{3$^o$.}}
It remains to bound $\Delta_{k,2}$. Observe that
\begin{align*}
    \Delta_{k,2} &\le - \sum_{t=1}^k\theta_t [ \langle {\delta_{t-1}(x_{t-1})}, z_t - z_{t-1}\rangle +  \langle {\delta_{t-1}(x_{t-1})}, z_{t-1} - x\rangle]\\
&\quad - \sum_{t=2}^k \theta_t \alpha_t [\langle {\delta_{t-1}(x_{t-1})}, z_t - z_{t-1}\rangle +  \langle {\delta_{t-1}(x_{t-1})}, z_{t-1} - x\rangle]\\
&\quad +  \sum_{t=2}^k\theta_t \alpha_t [ \langle {\delta_{t-1}(x_{t-2})}, z_t - z_{t-1}\rangle + {\langle \delta_{t-1}(x_{t-2}), z_{t-1} - x\rangle}] - {  \tfrac{3\theta_t }{4}\sum_{t=1}^k \tfrac{\eta_t}{2}\|z_{t-1}-z_t\|^2}.
\end{align*}
Recall that $z_t$ is ${\cal F}_{t-1}$-measurable. By using Young's inequality, we have
\begin{align}
  \Delta_{k,2} &\le  \sum_{t=1}^k [ {\tfrac{2\theta_t}{\eta_t}\|\delta_{t-1}(x_{t-1})\|^2} -  \theta_t\langle {\delta_{t-1}(x_{t-1})}, z_{t-1} - x\rangle]\nn\\
&\quad + \sum_{t=2}^k  [{\tfrac{2\theta_t\alpha_t^2}{\eta_t}\|\delta_{t-1}(x_{t-1})\|^2} -  \theta_t \alpha_t \langle {\delta_{t-1}(x_{t-1})}, z_{t-1} - x\rangle] \nn\\
&\quad + \sum_{t=2}^k[ {\tfrac{2\theta_t\alpha_t^2}{\eta_t}\|\delta_{t-1}(x_{t-2})\|^2}
+ \theta_t \alpha_t{\langle \delta_{t-1}(x_{t-2}), z_{t-1} - x\rangle}] \nn \\
&\le
  {\sum_{t=1}^k \big[\tfrac{2\theta_t(1+\alpha_t^2)}{\eta_t}\|\delta_{t-1}(x_{t-1})\|^2 + \tfrac{2\theta_{t+1}\alpha_{t+1}^2}{\eta_{t+1}}\|\delta_t(x_{t-1})\|^2 \big]}\nn\\
&\quad  + {\sum_{t=1}^{k}  \theta_t \langle\alpha_t \delta_{t-1}(x_{t-2})- (1+\alpha_t)\delta_{t-1}(x_{t-1}), z_{t-1} - x\rangle}.
\label{eq:bound_delta_2}
\end{align}
Substituting \eqref{eq:bound_delta_2} into \eqref{eq:GEM_converge_rel1_1} yields \eqref{eq:prop_GEM_main_general}. This completes the technical part of the argument; taking expectations and specializing the parameters produces the main recursion \eqref{expected_recursion_sge} used in the proof of Theorem~\ref{thm:sge}.\qed

\subsection{Proof of Corollary~\ref{corollary_known}}\label{proof_corollary_known}
\quad~~\textbf{1$^o$.}
For the SGE implementation, substituting $R_{s-1}^2=R_0^2/2^{s-1}$ into \eqref{eq:stage-complexity} yields \eqref{total_cost_results}. For convenience, define the stage-$s$ cost of oracle $\lambda$ by
\begin{equation}
    C_\lambda(s)
    :=\max\{A_\lambda,B_\lambda 2^{s-1}\},
    \qquad s=1,\ldots,K.
\end{equation}
Then we have
\begin{equation}
    \Cost_{\rm adap}(\epsilon)
    =\sum_{s=1}^K \min_\lambda g_\lambda(s),
    \quad
    \Cost_{\rm fixed}(\epsilon)
    =\min_\lambda \sum_{s=1}^K g_\lambda(s).
\end{equation}
which implies that
$
    \Cost_{\rm adap}(\epsilon)
    \le \Cost_{\rm fixed}(\epsilon).
$

\textbf{2$^o$.} We next show that the improvement can be of order $K/\log K$.
Assume $K\ge 4$, and let
\[
    m:=\lceil \log_2 K\rceil.
\]
Fix any $M>0$ and consider two oracles with
\[
    A_1=\tfrac{M}{K^2},
    \quad
    B_1=M2^{m-K+1},
    \quad
    A_2=M,
    \quad
    B_2=M2^{-K}.
\]
Then $A_1/A_2=K^{-2}$, whereas
\[
    \tfrac{B_1}{B_2}=2^{m+1}=\Theta(K),
\]
thus SO$1$ has a much smaller $A$-term but a much larger
residual-variance term.

For SO$2$, since
\[
    B_2 2^{s-1}
    \le B_2 2^{K-1}
    =\tfrac{M}{2}<M=A_2,
\]
we have
\[
    C_2(s)=M,
    \quad s=1,\ldots,K.
\]
Therefore, using SO$2$ throughout costs
\[
    \sum_{s=1}^K C_2(s)=MK.
\]
Now consider SO$1$. At stage $K-m$, we have 
\begin{align}\label{middle_cost_so1}
    B_1 2^{K-m-1}
    =
    M2^{m-K+1}2^{K-m-1}
    =M.
\end{align}
Therefore
\[
    C_1(s)\le M=C_2(s),
    \quad \text{for } s\le K-m,
\]
and
\[
    C_1(s)>M=C_2(s),
    \quad \text{for } s>K-m.
\]
Consequently, the adaptive rule uses SO$1$ during the early stages and
switches to SO$2$ for the final $m$ stages. In particular, the
switch occurs $m=\Theta(\log K)$ stages before termination. The adaptive cost therefore satisfies
\begin{align*}
    \Cost_{\rm adap}(\epsilon)
    &=
    \sum_{s=1}^{K-m} C_1(s)+mM\\
    &\le
    (K-m)A_1
    +B_1\sum_{s=1}^{K-m}2^{s-1}
    +mM\\
    &\le
    KA_1+B_1 2^{K-m}+mM=
    \tfrac{M}{K}+2M+mM,
\end{align*}
and consequently,
\[
    \Cost_{\rm adap}(\epsilon)
    =\Theta(mM)
    =\Theta(M\log K).
\]

It remains to check the best fixed oracle. As shown above,
SO$2$ costs exactly $MK$. For SO$1$, recalling Eq.~\eqref{middle_cost_so1}, its final-stage
cost satisfies
\[
    C_1(K)
    \ge B_1 2^{K-1}
    =M2^m
    \ge MK,
\]
Therefore, $
    \sum_{s=1}^K C_1(s)\ge MK,
$
and the best fixed oracle is SO2, with cost
\[
    \Cost_{\rm fixed}(\epsilon)=MK.
\]
We conclude that
\[
    \tfrac{\Cost_{\rm fixed}(\epsilon)}
         {\Cost_{\rm adap}(\epsilon)}
    =
    \Theta\left(\tfrac{K}{\log K}\right).
\]
Since $
    K=\left\lceil
    \log_2\frac{R_0^2}{\epsilon}
    \right\rceil,
$
this is equivalently
$
    \Theta\left(
    \tfrac{\log(R_0^2/\epsilon)}
         {\log\log(R_0^2/\epsilon)}
    \right).
$

\textbf{3$^o$.} Finally, we show that this order cannot be improved, up to constants,
for arbitrary stage costs of the form
$C_\lambda(s)=\max\{A_\lambda,B_\lambda2^{s-1}\}$.
Let
\[
    d_s:=\min_\lambda C_\lambda(s)
\]
denote the adaptive cost at stage $s$. Since every $C_\lambda(s)$ is
nondecreasing in $s$, $d_s$ is also nondecreasing.
Again let $m=\lceil\log_2 K\rceil$.
Choose an oracle $\bar\lambda$ attaining the minimum cost at stage $K-m$, i.e.,
$
    g_{\bar\lambda}(K-m)=d_{K-m}.
$
It follows that
\[
    A_{\bar\lambda}\le d_{K-m},
    \quad \text{and ~}
    B_{\bar\lambda}2^{K-m-1}\le d_{K-m}.
\]
Hence the cost of using this oracle throughout is bounded by
\begin{align*}
    \Cost_{\rm fixed}(\epsilon)
    &\le
    \sum_{s=1}^K
    \max\{A_{\bar\lambda},
          B_{\bar\lambda}2^{s-1}\}\le
    \sum_{s=1}^K
    \left(A_{\bar\lambda}
          +B_{\bar\lambda}2^{s-1}\right)\le
    Kd_{K-m}+B_{\bar\lambda}2^K\le
    \left(K+2^{m+1}\right)d_{K-m}.
\end{align*}
Since $m=\lceil\log_2K\rceil$, we have $2^m<2K$, and therefore
\[
    \Cost_{\rm fixed}(\epsilon)
    \le 5K d_{K-m}.
\]
On the other hand, monotonicity of $d_s$ indicates
\[
    \Cost_{\rm adap}(\epsilon)
    =\sum_{s=1}^K d_s
    \ge
    \sum_{s=K-m}^K d_s
    \ge
    (m+1)d_{K-m}.
\]
Combining the last two inequalities yields
\[
    \tfrac{\Cost_{\rm fixed}(\epsilon)}
         {\Cost_{\rm adap}(\epsilon)}
    \le
    \tfrac{5K}{m+1}
    =
    O\left(\tfrac{K}{\log K}\right).
\]
Together with the two-oracle construction above, this proves that the
$\Theta(K/\log K)$ savings factor is tight up to constants. \qed

\section{Proofs in Section~\ref{sec:unknown_bias}}\label{app:proof_unknown}
This section collects proofs in Section~\ref{sec:unknown_bias}.
\subsection{Proof of Lemma~\ref{lem:pg-certificate}}\label{proof:lem:pg-certificate}
Let
\[
    x^+
    =
    \Pi_X(x-\gamma\nabla f(x))
    =
    x-\gamma\PG_{X,\gamma}(x).
\]
The optimality condition of the projection and the definition of $\PG_{X,\gamma}(x)$ indicate that
\begin{align*}
\langle \nabla f(x) - \PG_{X, \gamma}(x), y - x^+\rangle \geq 0, \quad \forall y \in X.
\end{align*}
By rearranging terms and using the definition of $\PG_{X,\gamma}(x)$ again, we obtain
\begin{align*}
    \langle \nabla f(x), y - x^+\rangle - \langle \PG_{X, \gamma}(x), y-x\rangle - \gamma\|\PG_{X, \gamma}(x)\|^2 \geq 0.
\end{align*}
Invoking that $f$ is $\mu$-strongly convex and $L$-smooth, we have
\begin{align*}
\langle \nabla f(x), y - x^+\rangle &= \langle \nabla f(x), y - x\rangle + \langle \nabla f(x), x - x^+\rangle\\
& \leq f(y) - f(x) -\tfrac{\mu}{2}\|y-x\|^2 + f(x) - f(x^+) + \tfrac{L}{2}\|x^+ - x\|^2\\
& = f(y) - f(x^+) -\tfrac{\mu}{2}\|y-x\|^2 + \tfrac{\gamma^2 L}{2}\|\PG_{X,\gamma}(x)\|^2. 
\end{align*}
Combining the two inequalities above yields
\[
f(y)
\ge
f(x^+)
+
\inner{\PG_{X,\gamma}(x)}{y-x}
+
(\gamma - \tfrac{\gamma^2L}{2})
\|\PG_{X,\gamma}(x)\|^2
+
\tfrac{\mu}{2}\|y-x\|^2
\]
for every $y\in X$. Taking $y = x$ yields 
\begin{align*}
f(x) - f(x^+) \geq (\gamma - \tfrac{\gamma^2L}{2})
\|\PG_{X,\gamma}(x)\|^2.
\end{align*}
Taking $y=x^*$ and using $f(x^*)\le f(x^+)$ and $\gamma \leq 2/L$ provide us with
\[
\tfrac{\mu}{2}\|x-x^*\|^2
\le
\inner{\PG_{X,\gamma}(x)}{x-x^*}
\le
\|\PG_{X,\gamma}(x)\|\,\|x-x^*\|.
\]
Dividing by $\|x-x^*\|$ proves the first claim, and the second claim follows
immediately. \qed

\subsection{Proof of Lemma~\ref{lem:rr-noise}}\label{proof:lem:rr-noise}
Because the quadratic regularization is deterministic, we have
\[
\begin{aligned}
g_j(x)-\nabla\phi_j(x)
&=
g(x)
+\sum_{i=1}^j\rho_i(x-\widehat x_i)
-\nabla f(x)
-\sum_{i=1}^j\rho_i(x-\widehat x_i)=
g(x)-\nabla f(x),
\end{aligned}
\]
which proves the bias claim. Similarly,
\[
    \cG_j(x,\xi)-g_j(x)
    =
    \cG(x,\xi)-g(x),
\]
so the stochastic error itself is unchanged.

It remains only to rewrite the state-dependent term in terms of
the regularized objective. For every $i\ge1$,
\begin{align*}
\phi_{i-1}(x)-\phi_{i-1}^*
&=
\phi_i(x)-\phi_i^*
+\phi_i^*-\phi_{i-1}^*
-\tfrac{\rho_i}{2}\|x-\widehat x_i\|^2\\
&\le
\phi_i(x)-\phi_i^*
+\tfrac{\rho_i}{2}
\|x_{i-1}^*-\widehat x_i\|^2,
\end{align*}
where the last inequality follows from
\[
    \phi_i^*
    \le
    \phi_i(x_{i-1}^*)
    =
    \phi_{i-1}^*
    +
    \tfrac{\rho_i}{2}
    \|x_{i-1}^*-\widehat x_i\|^2.
\]
By iterating the preceding inequality, we obtain
\[
f(x)-f^* = \phi_0(x) - \phi_0^*
\le
\phi_j(x)-\phi_j^*
+
\sum_{i=1}^j
\tfrac{\rho_i}{2}
\|x_{i-1}^*-\widehat x_i\|^2.
\]
Substituting this into the original variance condition \eqref{assump:state-dependent} proves
\[
\E\|\cG_j(x,\xi)-g_j(x)\|_*^2
\le
\cL(\phi_j(x)-\phi_j^*)
+\widehat\sigma_{j,*}^2.
\]

Finally, $\phi_{i-1}$ is $\rho_i$-strongly convex because
\[
    \mu+\sum_{r=1}^{i-1}\rho_r
    =
    2^{i-1}\mu
    =
    \rho_i.
\]
Therefore we have
\[
    \tfrac{\rho_i}{2}
    \|\widehat x_i-x_{i-1}^*\|^2
    \le
    \phi_{i-1}(\widehat x_i)-\phi_{i-1}^*.
\]
Summing up the above inequality from $i=1$ to $j$ and taking expectations yields
\[
    \E[\widehat\sigma_{j,*}^2]
    \le
    \sigma_*^2
    +
    \cL\sum_{i=1}^j\delta_i,
\]
which completes the proof. \qed

\subsection{Proof of Proposition~\ref{prop:rr-pg}}\label{proof:prop:rr-pg}
We first control the displacement of the minimizers induced by the
regularizers. Since $\phi_{j-1}$ is $\rho_j$-strongly convex,
\begin{equation}
    \E\|\widehat x_j-x_{j-1}^*\|^2 \leq \tfrac{2}{\rho_j}\bbe[\phi_{j-1}(\widehat x_j) - \phi_{j-1}^*]
    \le
    \tfrac{2\delta_j}{\rho_j}.
\label{eq:rr-prev-dist}
\end{equation}
Moreover, we have
$
    \phi_j(\widehat x_j)
    =
    \phi_{j-1}(\widehat x_j),
$
while
$
    \phi_j^*
    \ge \phi_{j-1}^*.
$
Thus
\[
    \E[
       \phi_j(\widehat x_j)-\phi_j^*
    ] \leq \E[
       \phi_{j-1}(\widehat x_j)-\phi_{j-1}^*
    ]
    \le\delta_j.
\]
Since $\phi_j$ is $2\rho_j$-strongly convex, we have
\begin{equation}
    \E\|\widehat x_j-x_j^*\|^2 \leq \tfrac{1}{\rho_j}\bbe[\phi_{j}(\widehat x_j) - \phi_{j}^*]
    \le
    \tfrac{\delta_j}{\rho_j}.
\label{eq:rr-new-dist}
\end{equation}
For $t=1,\ldots,S-1$, define
$
    P_t
    :=
    \sum_{j=1}^t
    \rho_j\|x_t^*-\widehat x_j\|.
$
By using the triangle inequality, we obtain
\begin{align*}
P_t-P_{t-1}
&\le
\rho_t\|x_t^*-\widehat x_t\|
+
\left(\sum_{j=1}^{t-1}\rho_j\right)
\|x_t^*-x_{t-1}^*\|\\
&\le
2\rho_t\|x_t^*-\widehat x_t\|
+
\rho_t
  \|\widehat x_t-x_{t-1}^*\|.
\end{align*}
Substituting Ineqs.~\eqref{eq:rr-prev-dist} and \eqref{eq:rr-new-dist} into the above inequality and using Jensen's inequality provides us with
\begin{align*}
    \E[P_t-P_{t-1}] &\le
2\rho_t\sqrt{\bbe\|x_t^*-\widehat x_t\|^2}
+
\rho_t
  \sqrt{\bbe\|\widehat x_t-x_{t-1}^*\|^2}\le
    (2+\sqrt2)\sqrt{\rho_t\delta_t}.
\end{align*}
Summing up the above inequality from $t=1$ to $S-1$ yields
\begin{equation}
    \E[P_{S-1}]
    \le
    (2+\sqrt{2})\sum_{j=1}^{S-1}\sqrt{\rho_j\delta_j}.
\label{eq:rr-weighted-drift}
\end{equation}
For convenience, let us denote
$
    q(x)
    :=
    \sum_{j=1}^{S-1}
    \tfrac{\rho_j}{2}\|x-\widehat x_j\|^2$, sot that
    $\phi_{S-1}=f+q$.
By the nonexpansiveness of the Euclidean projection, we have
\[
\|\PG^f_{X,\gamma}(x)
   -\PG_{X,\gamma}^{\phi_{S-1}}(x)\|
\le
\|\nabla q(x)\|.
\]
At $x=\widehat x_S$, by the triangle inequality,
\begin{align*}
\|\nabla q(\widehat x_S)\|
&\le
\sum_{j=1}^{S-1}
\rho_j\|x_{S-1}^*-\widehat x_j\|
+
\left(\sum_{j=1}^{S-1}\rho_j\right)
\|\widehat x_S-x_{S-1}^*\| \leq P_{S-1} + \rho_S \|\widehat x_S-x_{S-1}^*\|,
\end{align*}
where the last inequality follows from the definition of $P_t$ and $
    \sum_{j=1}^{S-1}\rho_j
    =
    \rho_S-\mu
    \leq\rho_S.
$
Recall that the strong convexity modulus of $\phi_{S-1}$ is
$
    \rho_S=2^{S-1}\mu.
$
By Ineq.~\eqref{projected_gradient_ineq} in Lemma~\ref{lem:pg-certificate}, we have
\[
   \rho_S\|\widehat x_S-x_{S-1}^*\|
    \le
    2\|\PG_{X,\gamma}^{\phi_{S-1}}(\widehat x_S)\|.
\]
Consequently, combining the three inequalities above, we have
\begin{equation}
\|\PG^f_{X,\gamma}(\widehat x_S)\| \leq \|\PG_{X,\gamma}^{\phi_{S-1}}(\widehat x_S)\| + \|\nabla q(\widehat x_S)\|
\le
P_{S-1}
+
3\|\PG_{X,\gamma}^{\phi_{S-1}}(\widehat x_S)\|.
\label{eq:rr-pg-transfer}
\end{equation}
Finally, since
$\phi_{S-1}$ is smooth with modulus $
L+\sum_{j=1}^{S-1}\rho_j \leq L + 2^{S-1}\mu\leq2L,
$
our choice $\gamma=1/(2L)$ is a valid projected-gradient stepsize
for $\phi_{S-1}$. The projected-gradient descent inequality in \eqref{decent_ineq} yields
\[
\tfrac{1}{4L}
\|\PG_{X,\gamma}^{\phi_{S-1}}(\widehat x_S)\|^2
\le
\phi_{S-1}(\widehat x_S)-\phi_{S-1}^*,
\]
which implies
\[
\E
\|\PG_{X,\gamma}^{\phi_{S-1}}(\widehat x_S)\|
\le
2\sqrt{L\delta_S} \le
    2\sqrt{\rho_S\delta_S}.
\]
By combining the above inequality with Ineqs.~\eqref{eq:rr-weighted-drift} and \eqref{eq:rr-pg-transfer}, we obtain
\begin{align*}
\E\|\PG_{X,\gamma}(\widehat x_S)\|
\le
(2+\sqrt{2})\sum_{j=1}^{S-1}\sqrt{\rho_j\delta_j}
+
6\sqrt{\rho_S\delta_S},
\end{align*}
which completes the proof. \qed

\section{Prompt templates for the LLM-completion experiments}
\label{app:llm-prompts}
This appendix reports the system prompts used to construct the LLM-based data-completion oracles in Section~\ref{sec:numerics}. We use three prompt templates: a simple-imputation prompt, a short low-rank prompt, and a more detailed low-rank prompt. In the pipeline names, ``LR-short'' refers to the short low-rank prompt and ``LR'' refers to the detailed low-rank prompt. For the low-rank variants, the suffixes ``-7'' and ``-10'' indicate that 7 and 10 features are supplied as input to the prompt, respectively.

\paragraph{Simple imputation prompt (\texttt{SYSTEM\_SIMPLE}).}
\begin{quote}
{\small\ttfamily\raggedright
You are imputing missing numeric entries in a table.\par
Only fill missing values in these columns:\par
10percentage, 12percentage, collegeGPA, English, Logical, Quant, Domain.\par\medskip
Method: SIMPLE IMPUTATION\par
- For continuous/score-like columns, imitate median imputation logic: choose a typical central value consistent with the row context.\par
- For binary/discrete indicators (if any appear), imitate mode imputation logic.\par
- Do NOT change non-missing values.\par
- Return ONLY JSON matching the schema. No reasoning, no extra text.\par}
\end{quote}

\paragraph{Short low-rank prompt (\texttt{SYSTEM\_LOWRANK\_SHORT}).}
\begin{quote}
{\small\ttfamily\raggedright
You are imputing missing numeric entries in a table.\par
Only fill missing values in these columns:\par
10percentage, 12percentage, collegeGPA, English, Logical, Quant, Domain.\par\medskip
Method: LOW-RANK MATRIX COMPLETION\par
- Do NOT change non-missing values.\par
- Domain should be in [0,1].\par
- Return ONLY JSON matching the schema. No reasoning, no extra text.\par}
\end{quote}

\paragraph{Detailed low-rank prompt (\texttt{SYSTEM\_LOWRANK}).}
\begin{quote}
{\small\ttfamily\raggedright
You are imputing missing numeric entries in a table.\par
Only fill missing values in these columns:\par
10percentage, 12percentage, collegeGPA, English, Logical, Quant, Domain.\par\medskip
Method: LOW-RANK MATRIX COMPLETION\par
- Assume the data matrix is approximately low-rank.\par
- Emulate iterative SVD / nuclear-norm minimization logic: fill missing entries so rows/columns align with a few latent factors.\par
- Domain should be in [0,1].\par
- It is fine if lengthy calculations needed.\par
- Prefer smooth, globally consistent imputations.\par
- Do NOT change non-missing values.\par
- Return ONLY JSON matching the schema. No reasoning, no extra text.\par}
\end{quote}

\end{document}